\def\MM{\mathcal M}
\def\UU{\mathcal{U}}
\def\T{\mathbb{T}}
\def\R{{\mathbb R}}
\def\N{{\mathbb N}}
\def\KK{{\mathcal K}}
\def\NN{{\mathcal N}}
\def\HH{{\mathcal H}}
\def\OO{{\mathcal O}}
\def\SS{{\mathcal S}}
\def\TT{{\mathcal T}}
\def\XX{{\mathcal X}}
\def\YY{{\mathcal Y}}
\def\diam{{\rm diam}}
\def\C#1{C_{\rm #1}}
\def\edual#1#2{\langle#1\,,\,#2\rangle_{\LL}}
\def\norm#1#2{\|#1\|_{#2}}
\def\enorm#1{|\hspace*{-.5mm}|#1|\hspace*{-.5mm}|_{\WW}}
\def\set#1#2{\big\{#1\,:\,#2\big\}}
\def\eps{\varepsilon}
\def\dual#1#2{\langle#1\,,\,#2\rangle}
\def\level{{\rm level}}
\def\LL{\mathcal{L}}
\def\div{{\rm div}}
\def\AA{\textit{\textbf{A}}}
\def\BB{\mathcal{B}}
\def\bb{\textit{\textbf b}}
\def\refine{{\tt ref}}
\def\trunc{{\rm trunc}}
\def\Trunc{{\rm Trunc}}
\def\refine{{\tt refine}}
\def\MM{\mathcal M}
\def\RR{\mathcal{R}}
\def\supp{{\rm supp}}
\def\dist{{\rm dist}}
\def\Cdist{C_{\rm dist}}
\def\loc{\mathrm{loc}}
\def\proj{\mathrm{proj}}
\def\Ctrace{C_{\rm trace}}
\def\Cshape{C_{\rm shape}}
\def\Cpatch{C_{\rm patch}}
\def\Cell{C_{\rm ell}}
\def\Cclos{C_{\rm clos}}
\def\Cson{C_{\rm son}}
\def\Cinv{C_{\rm inv}}
\newcounter{constantsnumber}
\def\namec#1#2{%
  \ifthenelse{\equal{#1}{lipschitz}}{C_{\rm lip}}{%
  \ifthenelse{\equal{#1}{c:unifEquivLevel}}{C_{\rm level}}{%  
  \ifthenelse{\equal{#1}{mark}}{C_{\rm mark}}{%
  \ifthenelse{\equal{#1}{basis}}{C_{\rm basis}}{%
  \ifthenelse{\equal{#1}{monotone}}{C_{\rm mon}}{%
  \ifthenelse{\equal{#1}{cea}}{C_{\mbox{\rm\scriptsize C\'ea}}}{%
  \ifthenelse{\equal{#1}{norm}}{C_{\rm norm}}{%
  \ifthenelse{\equal{#1}{mon}}{{C}_{\rm mon}}{
  \ifthenelse{\equal{#1}{lip}}{{C}_{\rm lip}}{
  \ifthenelse{\equal{#1}{monA}}{c_{\rm mon}}{
  \ifthenelse{\equal{#1}{lipA}}{c_{\rm lip}}{
  \ifthenelse{\equal{#1}{normequiv1}}{c_{\rm norm}}{ 
  \ifthenelse{\equal{#1}{inv}}{C_{\rm inv}}{ 
  \ifthenelse{\equal{#1}{inv2}}{\widetilde{C}_{\rm inv}}{ 
  \ifthenelse{\equal{#2}{newcounter}}{\refstepcounter{constantsnumber}\label{const#1}}{}C_{\ref{const#1}}}%
  }}}}}}}}}}}}}}
\numberwithin{equation}{section}
\numberwithin{figure}{section}
\newtheorem{theorem}{Theorem}[section]
\newtheorem{proposition}[theorem]{Proposition}
\newtheorem{lemma}[theorem]{Lemma}
\newtheorem{corollary}[theorem]{Corollary}
\newtheorem{algorithm}[theorem]{Algorithm}
\newtheorem{remark}[theorem]{Remark}
\def\subsection#1
\bf\arabic{section}.\arabic{subsection}.~#1.~}
\renewcommand{\subsection}[1]{\refstepcounter{subsection}\medskip{\bf\thesubsection.~#1.}}
\newcommand*\patchAmsMathEnvironmentForLineno[1]{%
  \expandafter\let\csname old#1\expandafter\endcsname\csname #1\endcsname
  \expandafter\let\csname oldend#1\expandafter\endcsname\csname end#1\endcsname
  \renewenvironment{#1}%
     {\linenomath\csname old#1\endcsname}%
     {\csname oldend#1\endcsname\endlinenomath}}% 
\newcommand*\patchBothAmsMathEnvironmentsForLineno[1]{%
  \patchAmsMathEnvironmentForLineno{#1}%
  \patchAmsMathEnvironmentForLineno{#1*}}%
\begin{document}

\title{Adaptive IGAFEM with optimal convergence rates:\\Hierarchical B-splines}

\author{Gregor Gantner}
%\email{Gregor.Gantner@tuwien.ac.at}

\author{Daniel Haberlik}
%\email{Daniel.Haberlik@tuwien.ac.at}

\author{Dirk Praetorius}
%\email{Dirk.Praetorius@tuwien.ac.at}

%\address{TU Wien,
% Institute for Analysis and Scientific Computing,
% Wiedner Hauptstra\ss{}e 8-10,
% A-1040 Wien, Austria}

%%%%%%%%%%%%%%%%%%%%%%%%%%%%%%%%%%%%%%%%%%%%%%%%%%%%%%%%%%%%%%%%%%%%%
% Classification
%\keywords{preconditioner, multilevel additive Schwarz, isogeometric analysis, boundary element methods}
%\subjclass[2010]{65N30, 65F08, 65N38}
%%%%%%%%%%%%%%%%%%%%%%%%%%%%%%%%%%%%%%%%%%%%%%%%%%%%%%%%%%%%%%%%%%%%%

% Abstract
\begin{abstract}
We consider an  adaptive algorithm for finite element methods for the isogeometric analysis (IGAFEM) of elliptic (possibly non-symmetric) second-order partial differential equations in arbitrary space dimension $d\ge2$. We employ hierarchical B-splines of arbitrary degree and different order of smoothness. 
We propose a refinement strategy to generate a sequence of locally refined meshes and corresponding discrete solutions. 
Adaptivity is driven by some weighted residual {\sl a~posteriori} error estimator.
We prove linear convergence of the error estimator 
(resp.\ the sum of energy error plus data oscillations)
  with optimal algebraic rates. Numerical experiments underpin the theoretical findings.
\end{abstract}

\keywords{Isogeometric analysis; hierarchical splines; adaptivity.}

\subjclass[2010]{41A15, 65D07, 65N12, 65N30}

%%%%%%%%%%%%%%%%%%%%%%%%%%%%%%%%%%%%%%%%%%%%%%%%%%%%%%%%%%%%%%%%%%%%%
% Make title
%\date{\today}
\maketitle
%\thispagestyle{fancy}
%%%%%%%%%%%%%%%%%%%%%%%%%%%%%%%%%%%%%%%%%%%%%%%%%%%%%%%%%%%%%%%%%%%%%

% Contents
% !TEX encoding = MacOSRoman
%!TEX root = igafem.tex

%%%%%%%%%%%%%%%%%%%%%%%%%%%%%%%%%%%%%%%%%%%%%%%%%%%%%%%%%%%%%%%%%%%%%%%%%%%%%%%%%%%%%%%%%%%%%
%%%%%%%%%%%%%%%%%%%%%%%%%%%%%%%%%%%%%%%%%%%%%%%%%%%%%%%%%%%%%%%%%%%%%%%%%%%%%%%%%%%%%%%%%%%%%
\section{Introduction}
%%%%%%%%%%%%%%%%%%%%%%%%%%%%%%%%%%%%%%%%%%%%%%%%%%%%%%%%%%%%%%%%%%%%%%%%%%%%%%%%%%%%%%%%%%%%%
%%%%%%%%%%%%%%%%%%%%%%%%%%%%%%%%%%%%%%%%%%%%%%%%%%%%%%%%%%%%%%%%%%%%%%%%%%%%%%%%%%%%%%%%%%%%%

%%%%%%%%%%%%%%%%%%%%%%%%%%%%%%%%%%%%%%%%%%%%%%%%%%%%%%%%%%%%%%%%%%%%%%%%%%%%%%%%%%%%%%%%%%%%%
\subsection{Adaptivity in isogeometric analysis}
%%%%%%%%%%%%%%%%%%%%%%%%%%%%%%%%%%%%%%%%%%%%%%%%%%%%%%%%%%%%%%%%%%%%%%%%%%%%%%%%%%%%%%%%%%%%%
The central idea of isogeometric analysis (IGA) is to use the same ansatz functions for the discretization of the partial differential equation (PDE) as for the representation of the problem geometry in computer aided design (CAD); see~\cite{pioneer,bible,approximation}.
The CAD standard for spline representation in a multivariate setting relies on tensor-product B-splines.
However, to allow for adaptive refinement, several extensions of the B-spline model have recently emerged, e.g.,  analysis-suitable T-splines~\cite{scott,beirao}, hierarchical splines~\cite{juttler,juttler2,vanderzee},  or LR-splines~\cite{lr,dokken}; see also \cite{comparison1,comparison2} for a  comparison of these approaches.
All these concepts have been studied via numerical experiments.
% in the frame of adaptive finite element methods (FEM).
% see~\cite{} for T-splines,~\cite{} for hierarchical splines,~\cite{} for LR-splines or also~\cite{} for a comparison.
However, so far there exists only little literature concerning the thorough mathematical analysis of adaptive isogeometric finite element methods (IGAFEM).
Indeed, we are only aware of the works~\cite{garau16} which investigates an estimator reduction of an IGAFEM with certain hierarchical splines introduced in~\cite{garau}, as well as ~\cite{bg} which investigates linear  convergence of an IGAFEM with truncated hierarchical B-splines introduced in~\cite{juttler2}.
In the continuation of the latter work \cite{bg},~\cite{morgenstern} studied the corresponding mesh-refinement strategy together with some refinement-related properties for the proof of optimal convergence.
However, the mathematical proof that the adaptive strategy of~\cite{bg} leads to optimal convergence rates, is still missing in the literature.
Moreover, in this case one cannot use hierarchical B-splines for the implementation, but has to use truncated hierarchical B-splines instead.
It is important to note that  the procedure of truncation requires a specific construction that entails complicated supports of the basis functions, which are in general not even connected, and their use may produce an  overhead with an adaptive strategy that cannot be neglected.
For standard FEM with globally continuous piecewise polynomials, adaptivity is  well understood; see, e.g.,~\cite{doerfler,mns00,bdd,stevenson,ckns,ffp14,axioms}  for milestones on convergence and optimal convergence rates.
In the frame of isogeometric boundary element methods (IGABEM), we
also mention our recent work~\cite{resigaconv} which shows linear convergence  with optimal rates for some adaptive isogeometric boundary element method in 2D from~\cite{fgp,resigabem},
% for isogeometric boundary element methods in two dimensions, 
where it is however sufficient to use the span of univariate non-uniform rational B-splines (NURBS).

%%%%%%%%%%%%%%%%%%%%%%%%%%%%%%%%%%%%%%%%%%%%%%%%%%%%%%%%%%%%%%%%%%%%%%%%%%%%%%%%%%%%%%%%%%%%%
\subsection{Model problem}
%%%%%%%%%%%%%%%%%%%%%%%%%%%%%%%%%%%%%%%%%%%%%%%%%%%%%%%%%%%%%%%%%%%%%%%%%%%%%%%%%%%%%%%%%%%%%
On the bounded Lipschitz-domain $\Omega\subset\R^d$ with $d\ge 2$, we consider a general second-order linear elliptic PDE in divergence form with homogenous Dirichlet boundary conditions
\begin{align}\label{eq:problem}
\begin{split}
\mathcal{L}u:=-\div(\AA\nabla u)+\bb\cdot\nabla u+cu&=f\quad \text{in }\Omega,\\
u&=0\quad\text{on }\partial\Omega.
\end{split}
\end{align}
We pose the following regularity assumptions on the coefficients:
$\AA(x)\in\R^{d\times d}_{\mathrm{sym}}$ is a symmetric matrix with $\AA\in W^{1,\infty}(\Omega)$.
The vector $\bb(x)\in\R^d$   and the scalar $c(x)\in\R$ satisfy  $\bb,c\in L^\infty(\Omega)$.
We interpret  $\mathcal{L}$ in its weak form and define the corresponding bilinear form 
\begin{align}
\edual{w}{v}:=\int_\Omega \AA(x)\nabla w(x)\cdot\nabla v(x)+\bb(x)\cdot\nabla w(x) v(x)+c(x)w(x)v(x)\,dx. 
\end{align}
The bilinear form is continuous, i.e., it holds that
\begin{align}
\edual{w}{v}\le \big(\norm{\AA}{L^\infty(\Omega)}+\norm{\bb}{L^\infty(\Omega)}+\norm{c}{L^\infty(\Omega)}\big)\norm{w}{H^1(\Omega)}\norm{v}{H^1(\Omega)}
\text{ for all $v,w\in H^1(\Omega)$.}
\end{align}
Additionally, we suppose ellipticity of $\edual{\cdot}{\cdot}$ on $H^1_0(\Omega)$, i.e.,
\begin{align}\label{eq:ellipticity}
\edual{v}{v}\ge \Cell\norm{v}{H^1(\Omega)}^2\quad\text{for all }v\in H_0^1(\Omega).
\end{align}
Note that~\eqref{eq:ellipticity} is for instance satisfied if $\AA(x)$ is uniformly positive definite and if $\bb\in H({\rm div},\Omega)$ with $-\frac12\,{\rm div}\,\bb(x)+c(x)\ge0$ almost everywhere in $\Omega$.

Overall, the boundary value  problem~\eqref{eq:problem} fits into the setting of the Lax-Milgram theorem and therefore admits a unique solution $u\in H_0^1(\Omega)$ to the weak formulation
\begin{align}
 \edual{u}{v} = \int_\Omega fv\,dx
 \quad\text{for all }v\in H^1_0(\Omega).
\end{align}
Finally, we note that the additional regularity $\AA\in W^{1,\infty}(\Omega)$ (instead of $\AA\in L^{\infty}(\Omega)$) is only required for the well-posedness of the residual {\sl a~posteriori} error estimator, see Section~\ref{sec:abstract setting}.

%%%%%%%%%%%%%%%%%%%%%%%%%%%%%%%%%%%%%%%%%%%%%%%%%%%%%%%%%%%%%%%%%%%%%%%%%%%%%%%%%%%%%%%%%%%%%
\subsection{Outline \& Contributions}
%%%%%%%%%%%%%%%%%%%%%%%%%%%%%%%%%%%%%%%%%%%%%%%%%%%%%%%%%%%%%%%%%%%%%%%%%%%%%%%%%%%%%%%%%%%%%
The remainder of this work is roughly organized as follows: Section~\ref{sec:abstract setting} provides an abstract framework for adaptive mesh-refinement for conforming FEM for the model problem~\eqref{eq:problem}. Its main result is Theorem~\ref{thm:abstract} which states optimal convergence behavior of some standard adaptive algorithm.
Section~\ref{sec:hierarchical} considers conforming FEM based on hierarchical splines. Its main result is Theorem~\ref{thm:main} which states that hierarchical splines fit into the framework of Section~\ref{sec:abstract setting}. The proofs of Theorem~\ref{thm:abstract} and Theorem~\ref{thm:main} are given in Section~\ref{sec:proof abstract} and Section~\ref{sec:proof}, respectively.
Three numerical experiments in Section~\ref{sec:numerics} underpin the optimal convergence behavior of adaptive IGAFEM with hierarchical splines.

\def\Ceff{C_{\rm eff}}%
\def\Crel{C_{\rm rel}}%
\def\osc{{\rm osc}}%
In more detail, the contribution of Section~\ref{sec:abstract setting} can be paraphrased as follows: We formulate the standard adaptive strategy (Algorithm~\ref{the algorithm}) driven by some residual {\sl a~posteriori} error estimator~\eqref{eq:eta} in the frame of conforming FEM. We formulate three assumptions~\eqref{M:shape}--\eqref{M:trace} for the underlying meshes (Section~\ref{subsec:general mesh}), five assumptions~\eqref{R:sons}--\eqref{R:overlay} on the mesh-refinement (Section~\ref{subsec:general refinement}), and six assumptions~\eqref{S:inverse}--\eqref{S:grad} on the 
FEM spaces (Section~\ref{subsec:ansatz}). 
First, these assumptions are sufficient to guarantee that the error estimator $\eta_\bullet$ associated with the FEM solution $U_\bullet\in\XX_\bullet\subset H^1_0(\Omega)$ is 
efficient and
reliable, i.e., there exist 
$\Ceff$,
$\Crel>0$ such that 
\begin{align}\label{intro:releff}
 \Ceff^{-1}\,\eta_\bullet
 \le \inf_{V_\bullet\in\XX_\bullet}\big(\norm{u-V_\bullet}{H^1(\Omega)} + \osc_\bullet(V_\bullet)\big)
 \le 
 \norm{u-U_\bullet}{H^1(\Omega)} 
 + \osc_\bullet(U_\bullet)
 \le \Crel\,\eta_\bullet.
\end{align} 
where $\osc_\bullet(\cdot)$ denotes certain data oscillation terms.
 Second, Theorem~\ref{thm:abstract} states that Algorithm~\ref{the algorithm} leads to linear convergence with optimal rates in the spirit of~\cite{ckns,axioms}: Let $\eta_\ell$ denote the error estimator in the $\ell$-th step of the adaptive algorithm. Then, there exist $C>0$ and $0<q<1$ such that
\begin{align}\label{intro:linear}
 \eta_{\ell+n} \le C\,q^n\,\eta_\ell
 \quad\text{for all }\ell,n\in\N_0.
\end{align}
Moreover, for sufficiently small marking parameters in Algorithm~\ref{the algorithm}, the estimator 
(resp. the so-called total error $\norm{u-U_\ell}{H^1(\Omega)} + \osc_\ell(U_\ell)$; see~\eqref{intro:releff}) 
decays even with the optimal algebraic convergence rate in the sense of certain nonlinear approximation classes (Section~\ref{sec:axioms}).

In explicit terms, we identify sufficient conditions of the underlying meshes, the local FEM spaces, as well as  the employed (local) mesh-refinement rule which guarantee that the related residual {\sl a~posteriori} error estimator satisfies the \emph{axioms of adaptivity} from~\cite{axioms}, so that 
linear convergence with optimal rates for the standard adaptive algorithm follows.
While we exploit this framework only for IGAFEM with hierarchical splines, we believe that it might also serve as a promising starting point to analyze different technologies for adaptive IGAFEM like (analysis-suitable) $T$-splines or LR-splines, as well as for other conforming discretizations like the virtual element method (VEM) from~\cite{vem}.

Section~\ref{sec:hierarchical} recalls the definition of hierarchical splines from~\cite{juttler}, derives the canonical basis of the hierarchical spline space $\XX_\bullet\subset H^1_0(\Omega)$ with Dirichlet boundary conditions (Section~\ref{section:basis}), formulates an admissibility criterion~\eqref{def:admissible} for hierarchical meshes (Section~\ref{subsec:admissible meshes}), and introduces some local mesh-refinement rule for admissible hierarchical meshes (Section~\ref{subsec:concrete refinement}). 
One crucial observation is that the new mesh-refinement strategy for hierarchical meshes (Algorithm~\ref{alg:refinement}) guarantees that the number of (truncated) hierarchical B-splines on each element as well as the number of active elements contained in the support of each (truncated) hierarchical B-spline is uniformly bounded (Proposition~\ref{prop:bounded number}). If one uses the strategy of~\cite{bg,morgenstern} instead, this property is is not satisfied for hierarchical B-splines, but only for truncated hierarchical B-splines. In general, the latter have a smaller, but also more complicated and not necessarily connected support.

The main result of Section~\ref{sec:hierarchical} and the entire work is Theorem~\ref{thm:main} which states that hierarchical splines together with the proposed local mesh-refinement strategy satisfy all assumptions of Section~\ref{sec:abstract setting}, so that Theorem~\ref{thm:abstract} applies. In particular, our work goes beyond~\cite{bg} in two respects: While~\cite{bg} only proves linear convergence of the adaptive algorithm, we give the first proof of optimal convergence rates for IGAFEM. Moreover,~\cite{bg} adapts the analysis of~\cite{ckns} and is hence restricted to symmetric problems (i.e., $\bb=0$ and $c\ge0$ in~\eqref{eq:problem}). Our analysis exploits the framework of~\cite{axioms} together with some recent ideas from~\cite{ffp14,helmholtz} and also covers the non-symmetric problem~\eqref{eq:problem}.

Technical contributions of general interest include the following: We prove that a hierarchical mesh is admissible if and only if it can be obtained by the mesh-refinement strategy of Algorithm~\ref{alg:refinement} (Proposition~\ref{prop:refineT subset T}). Moreover, admissible meshes also allow a simpler computation of truncated hierarchical B-splines in the sense that truncation simplifies considerably (Proposition~\ref{prop:trunc}). Together with some ideas from~\cite{speleers}, we use this observation to define a Scott-Zhang-type projector $J_\bullet:L^2(\Omega)\to\XX_\bullet$ which is locally $L^2$- and $H^1$-stable and has a first-order approximation property (Section~\ref{subsec:E4.1 true}).

%%%%%%%%%%%%%%%%%%%%%%%%%%%%%%%%%%%%%%%%%%%%%%%%%%%%%%%%%%%%%%%%%%%%%%%%%%%%%%%%%%%%%%%%%%%%%
\subsection{General notation}
%%%%%%%%%%%%%%%%%%%%%%%%%%%%%%%%%%%%%%%%%%%%%%%%%%%%%%%%%%%%%%%%%%%%%%%%%%%%%%%%%%%%%%%%%%%%%
Throughout, $|\cdot|$ denotes the absolute value of scalars, the Euclidean norm of vectors in $\R^d$, as well as  the $d$-dimensional measure of a set in $\R^d$. 
Moreover, $\#$ denotes the cardinality of a  set as well as the multiplicity of a knot within a given knot vector.
%The respective meaning will be clear from the context. 
We write $A\lesssim B$ to abbreviate $A \le cB$ with some generic constant $c > 0$ which is clear from the context. Moreover, $A \simeq  B$ abbreviates $A\lesssim B \lesssim A$. Throughout, mesh-related quantities have the same index, e.g., $\XX_\bullet$ is the ansatz space corresponding to the  mesh  $\TT_\bullet$. 
The analogous notation is used for meshes $\TT_\circ$, $\TT_\star$ or $\TT_{\ell}$ etc.
Moreover, we use $\widehat{\cdot}$ to transfer  quantities in the physical domain $\Omega$ to the parameter domain $\widehat \Omega$, e.g., we write $\widehat\T$ for the set of all admissible meshes in the parameter domain instead of $\T$ which denotes the set of all admissible meshes in the physical domain.

% !TEX encoding = MacOSRoman
%!TEX root = igafem.tex

\def\coarse{\bullet}
\def\fine{\circ}
\def\qson{q_{\rm son}}
\def\kproj{k_{\proj}}
\def\kloc{k_{\rm loc}}
\def\kapp{k_{\rm app}}
\def\kgrad{k_{\rm grad}}
\def\Crel{C_{\rm rel}}
\def\Clin{C_{\rm lin}}
\def\qlin{q_{\rm lin}}
\def\copt{c_{\rm opt}}
\def\Copt{C_{\rm opt}}
\def\Cmin{C_{\rm min}}
\def\Cstab{C_{\rm stb}}
\def\Cred{C_{\rm red}}
\def\qred{q_{\rm red}}
\def\Cdrel{C_{\rm drel}}
\def\Cref{C_{\rm ref}}
\def\opt{{\rm opt}}
\def\qest{q_{\rm est}}
\def\Cest{C_{\rm est}}
\def\enorm#1{|\!|\!|#1|\!|\!|_\LL}

%%%%%%%%%%%%%%%%%%%%%%%%%%%%%%%%%%%%%%%%%%%%%%%%%%%%%%%%%%%%%%%%%%%%%%%%%%%%%%%%%%%%%%%%%%%%%
%%%%%%%%%%%%%%%%%%%%%%%%%%%%%%%%%%%%%%%%%%%%%%%%%%%%%%%%%%%%%%%%%%%%%%%%%%%%%%%%%%%%%%%%%%%%%
\section{Axioms of adaptivity (revisited)}
\label{sec:abstract setting}
%%%%%%%%%%%%%%%%%%%%%%%%%%%%%%%%%%%%%%%%%%%%%%%%%%%%%%%%%%%%%%%%%%%%%%%%%%%%%%%%%%%%%%%%%%%%%
%%%%%%%%%%%%%%%%%%%%%%%%%%%%%%%%%%%%%%%%%%%%%%%%%%%%%%%%%%%%%%%%%%%%%%%%%%%%%%%%%%%%%%%%%%%%%

The aim of this section is to formulate an adaptive algorithm (Algorithm~\ref{the algorithm}) for conforming FEM discretizations of our model problem~\eqref{eq:problem}, where adaptivity is driven by the residual {\sl a~posteriori} error estimator (see \eqref{eq:eta} below).
We identify the crucial properties of the underlying meshes, the mesh-refinement, as well as the finite element spaces which ensure that the residual error estimator fits into the general framework of \cite{axioms} and which hence guarantee optimal convergence behavior of the adaptive algorithm.
The main result of this section is Theorem~\ref{thm:abstract} which is proved in Section~\ref{sec:proof abstract}.
%Essentially, our analysis is covered by \cite{axioms} which determines required properties of the error estimator in a very abstract framework covering amongst others (non-) conforming FEM for general second order elliptic equations and certain nonlinear problems, as well as conforming BEM for the Laplace problem. 
%Nevertheless, since \cite{axioms} is formulated for a broad range of problems, it does not focus on details for each individual problem.
%Hence, our goal was to identify the crucial properties of the underlying meshes, the local FEM spaces, and the employed (local) mesh-refinement rule. 
%Indeed, to analyse adaptivity for conforming FEM of our considered problem, it is much more convenient to verify these properties than the abstract estimator properties of \cite{axioms}.
%Later, we will prove them for an IGAFEM based on hierarchical meshes and splines.
%However, this approach could also be of interest for future works on conforming FEM for general second-order linear elliptic PDEs.
%Indeed, it could cover other methods such as the virtual element method (VEM) introduced in \cite{vem} or an IGAFEM based on T-splines or LR-splines. 

%%%%%%%%%%%%%%%%%%%%%%%%%%%%%%%%%%%%%%%%%%%%%%%%%%%%%%%%%%%%%%%%%%%%%%%%%%%%%%%%%%%%%%%%%%%%%
\subsection{Meshes}
\label{subsec:general mesh}
%%%%%%%%%%%%%%%%%%%%%%%%%%%%%%%%%%%%%%%%%%%%%%%%%%%%%%%%%%%%%%%%%%%%%%%%%%%%%%%%%%%%%%%%%%%%%
«Throughout, $\TT_\coarse$ is a mesh of $\Omega$ in the following sense:
\begin{itemize}
\item $\TT_\coarse$ is a finite set of compact Lipschitz domains;
\item for all $T,T'\in\TT_\coarse$ with $T\neq T'$, the intersection $T\cap T'$ has measure zero;
\item $\overline\Omega = \bigcup_{T\in\TT_\coarse}{T}$, i.e., $\TT_\coarse$ is a partition of $\Omega$.
\end{itemize}
We suppose that there is a countably infinite set $\T$ of admissible meshes. For  $\TT_\coarse\in\T$ and $\omega\subseteq\overline\Omega$, we define the patches of order $k\in\N_0$ inductively by
\begin{align}
 \pi_\coarse^0(\omega) := \omega,
 \quad 
 \pi_\coarse^k(\omega) := \bigcup\set{T\in\TT_\coarse}{ {T}\cap \pi_\coarse^{k-1}(\omega)\neq \emptyset}.
\end{align}
The corresponding set of elements is
\begin{align}
 \Pi_\coarse^k(\omega) := \set{T\in\TT_\coarse}{ {T} \subseteq \pi_\coarse^k(\omega)},
 \quad\text{i.e.,}\quad
 \pi_\coarse^k(\omega) = \bigcup\Pi_\coarse^k(\omega).
 % := \bigcup_{T\in\Pi_\coarse^k(\omega)}T.
\end{align}
To abbreviate notation, we let $\pi_\coarse(\omega) := \pi_\coarse^1(\omega)$ and $\Pi_\coarse(\omega) := \Pi_\coarse^1(\omega)$.
For $\SS_\coarse\subseteq\TT_\coarse$, we define $\pi_\coarse^k(\SS_\coarse):=\pi_\coarse^k(\bigcup\SS_\coarse)$
and $\Pi_\coarse^k(\SS_\coarse):=\Pi_\coarse^k(\bigcup\SS_\coarse)$. 

We suppose that there exist $\Cshape,\Cpatch,\Ctrace>0$ such that all meshes $\TT_\coarse\in\T$ satisfy the following three properties~\eqref{M:shape}--\eqref{M:trace}:
\begin{enumerate}
\renewcommand{\theenumi}{M\arabic{enumi}}
\bf\item\rm\label{M:shape}
\textbf{Shape regularity.}
For all $T\in\TT_\coarse$ and all $T'\in\Pi_\coarse(T)$, it holds that $\Cshape^{-1}|T'| \le |T| \le \Cshape\,|T'|$, i.e., neighboring elements have comparable size.
\bf\item\rm\label{M:patch}
\textbf{Bounded element patch.}
For all $T\in\TT_\coarse$, it holds that $\#\Pi_\coarse(T)\le\Cpatch$, 
i.e., the number of elements in a patch is uniformly bounded.
\bf\item\rm\label{M:trace}
\textbf{Trace inequality.}
For all $T\in\TT_\coarse$ and all $v\in H^1(\Omega)$, it holds that
$\norm{v}{L^2(\partial T)}^2\le \Ctrace \big(|T|^{-1/d}\norm{v}{L^2(T)}^2+ |T|^{1/d}\norm{\nabla v}{L^2(T)}^2\big).$
\end{enumerate}

%%%%%%%%%%%%%%%%%%%%%%%%%%%%%%%%%%%%%%%%%%%%%%%%%%%%%%%%%%%%%%%%%%%%%%%%%%%%%%%%%%%%%%%%%%%%%
\subsection{Mesh-refinement}
\label{subsec:general refinement}
%%%%%%%%%%%%%%%%%%%%%%%%%%%%%%%%%%%%%%%%%%%%%%%%%%%%%%%%%%%%%%%%%%%%%%%%%%%%%%%%%%%%%%%%%%%%%
%We suppose a relation $\le$ on $\T$, where $\TT_\coarse\le\TT_\fine$ denotes that $\TT_\fine$ is a refinement of $\TT_\coarse$. We abbreviate
%\begin{align}
%\refine(\TT_\coarse):=\set{\TT_\fine\in\T}{\TT_\coarse\le\TT_\fine}
%\end{align}
For $\TT_\coarse\in\T$ and an arbitrary set of marked elements $\MM_\coarse\subseteq\TT_\coarse$, we associate a corresponding refinement $\TT_\fine:=\refine(\TT_\coarse,\MM_\coarse) \in\T$ with $\MM_\coarse\subseteq\TT_\coarse\setminus\TT_\fine$, i.e., at least the marked elements have been refined.
We define $\refine(\TT_\coarse)$ as the set of all $\TT_\fine$ such that there exist  meshes $\TT_{(0)},\dots,\TT_{(J)}$ and marked elements $\MM_{(0)},\dots,\MM_{(J-1)}$ with $\TT_\fine=\TT_{(J)}=\refine(\TT_{(J-1)},\MM_{(J-1)}),\dots,\TT_{(1)}=\refine(\TT_{(0)},\MM_{(0)})$ and $\TT_{(0)}=\TT_\coarse$. 
Here, we formally allow $J=0$, i.e., $\TT_\coarse\in\refine(\TT_\coarse)$.
We assume that there exists a fixed initial mesh $\TT_0\in\T$ with $\T=\refine(\TT_0)$.

We suppose that there exist $\Cson\ge2$ and $0<\qson<1$ such that all meshes $\TT_\coarse\in\T$  satisfy for arbitrary marked elements $\MM_\coarse\subseteq\TT_\coarse$ with corresponding refinement $\TT_\fine:=\refine(\TT_\coarse,\MM_\coarse)$, the following elementary properties~\eqref{R:sons}--\eqref{R:reduction}:
\begin{enumerate}
\renewcommand{\theenumi}{R\arabic{enumi}}
\bf\item\rm\label{R:sons}
\textbf{Bounded number of sons.}
It holds that $\#\TT_\fine \le \Cson\,\#\TT_\coarse$, i.e., one step of refinement leads to a bounded increase of elements.
%
%\bf\item\rm\label{R:refine}
%For all $\TT_\fine\in\refine(\TT_\coarse)$, it holds that $\#(\TT_\coarse\setminus\TT_\fine)\le \#\TT_\fine-\#\TT_\coarse$, i.e., each refined element is split into at least two sons.
%
\bf\item\rm\label{R:union}
\textbf{Father is union of sons.}
It holds that $T=\bigcup\set{{T'}\in\TT_\fine}{T'\subseteq T}$ for all $T\in\TT_\coarse$, i.e., each element $T$ is the union of its successors.
\bf\item\rm\label{R:reduction}
\textbf{Reduction of sons.}
It holds that $|T'| \le \qson\,|T|$ for all $T\in\TT_\coarse$ and all $T'\in\TT_\fine$ with $T'\subsetneqq T$, i.e., successors are uniformly smaller than their father.
\end{enumerate}
By induction and the definition of $\refine(\TT_\coarse)$, one easily sees that \eqref{R:union}--\eqref{R:reduction} remain valid if $\TT_\fine$ is an arbitrary mesh in $\refine(\TT_\coarse)$.
In particular, \eqref{R:union}--\eqref{R:reduction} imply that each refined element $T\in\TT_\coarse\setminus\TT_\fine$ is split into at least two sons, wherefore 
\begin{align}\label{eq:R:refine}
\#(\TT_\coarse\setminus\TT_\fine)\le \#\TT_\fine-\#\TT_\coarse\quad\text{for all }\TT_\fine\in\refine(\TT_\coarse).
\end{align}
Besides~\eqref{R:sons}--\eqref{R:reduction}, we suppose the following less trivial requirements~\eqref{R:closure}--\eqref{R:overlay} with a generic constant $\Cclos>0$:
\begin{enumerate}
\renewcommand{\theenumi}{R\arabic{enumi}}
\setcounter{enumi}{3}
\bf\item\rm\label{R:closure}
\textbf{Closure estimate.}
%There exists a constant $\Cclos>0$ such that the following assertion holds:
If $\MM_\ell\subseteq\TT_\ell$ and $\TT_{\ell+1}=\refine(\TT_\ell,\MM_\ell)$ for all $\ell\in\N_0$, then 
\begin{align*}
\# \TT_L-\#\TT_0\le \Cclos\sum_{\ell=0}^{L-1}\#\MM_\ell\quad\text{for all }L\in\N.
\end{align*}
\bf\item\rm\label{R:overlay}
\textbf{Overlay estimate.}
For all $\TT_\coarse,\TT_\star\in\T$, there exists a common refinement $\TT_\fine\in\refine(\TT_\coarse)\cap\refine(\TT_\star)$ such that 
\begin{align*}
\#\TT_\fine \le \#\TT_\coarse + \#\TT_\star - \#\TT_0.
\end{align*}
\end{enumerate}

%%%%%%%%%%%%%%%%%%%%%%%%%%%%%%%%%%%%%%%%%%%%%%%%%%%%%%%%%%%%%%%%%%%%%%%%%%%%%%%%%%%%%%%%%%%%%
\subsection{Finite element space}\label{subsec:ansatz}
%%%%%%%%%%%%%%%%%%%%%%%%%%%%%%%%%%%%%%%%%%%%%%%%%%%%%%%%%%%%%%%%%%%%%%%%%%%%%%%%%%%%%%%%%%%%%
With each $\TT_\coarse\in\T$, we associate a finite dimensional space
\begin{align}
\XX_\coarse \subset \set{v\in H^1_0(\Omega)}{v|_T\in H^2(T)\text{ for all }T\in\TT_\coarse}.
\end{align}
Let $U_\coarse\in\XX_\coarse$ be the corresponding Galerkin approximation to the solution $u\in H^1_0(\Omega)$, i.e.,
\begin{align}
 \edual{U_\coarse}{V_\coarse} = \int_\Omega fV_\coarse\,dx
 \quad\text{for all }V_\coarse\in\XX_\coarse.
\end{align}
We note the Galerkin orthogonality
\begin{align}\label{eq:galerkin}
 \edual{u-U_\coarse}{V_\coarse} = 0
 \quad\text{for all }V_\coarse\in\XX_\coarse
\end{align}
as well as the resulting C\'ea-type quasi-optimality
\begin{align}\label{eq:cea}
 \norm{u-U_\coarse}{H^1(\Omega)}
 \le C_{\text{C\'ea}}\min_{V_\coarse\in\XX_\coarse}\norm{u-V_\coarse}{H^1(\Omega)},
 \text{ with }
 C_{\text{C\'ea}} := \textstyle\frac{\norm{\AA}{L^\infty(\Omega)}\!+\!\norm{\bb}{L^\infty(\Omega)}\!+\!\norm{c}{L^\infty(\Omega)}}{\Cell}.
\end{align}%

We suppose that there exist constants $\Cinv>0$ and $\kloc,\kproj \in\N_0$ such that the following properties~\eqref{S:inverse}--\eqref{S:local} hold for all $\TT_\coarse\in\T$:
\begin{enumerate}
\renewcommand{\theenumi}{S\arabic{enumi}}
\bf\item\rm\label{S:inverse}
\textbf{Inverse estimate.}
%There exists  such that for all $\TT_\coarse\in\T$, the following assertion holds:
For  all $i,j\in\{0,1,2\}$ with $j\le i$, all $V_\coarse\in\XX_\coarse$ and all $T\in\TT_\bullet$, it holds that $|T|^{(i-j)/d} \norm{V_\coarse}{H^i(T)}\le \Cinv \, \norm{V_\coarse}{H^j(T)}$.
\bf\item\rm\label{S:nestedness}
\textbf{Refinement guarantees nestedness.}
For  all $\TT_\fine\in\refine(\TT_\coarse)$, it holds that $\XX_\coarse\subseteq\XX_\fine$.
\bf\item\rm\label{S:local}
\textbf{Local domain of definition.}
%There exists $\kloc\in\N_0$ such that for all $\TT_\coarse\in\T$ and all $\TT_\fine\in\refine(\TT_\coarse)$, the following assertions
With $\Pi_\coarse^{\rm loc}:=\Pi_\coarse^{\kloc}$,  $\pi_\coarse^{\rm loc}:=\pi_\coarse^{\kloc}$ and $\pi_\coarse^{\proj} := \pi_\coarse^{\kproj}$, it holds for all $\TT_\fine\in\refine(\TT_\coarse)$ and all
 $T\in\TT_\coarse\setminus \Pi_\coarse^{\rm loc}( \TT_\coarse\setminus\TT_\fine)\subseteq\TT_\coarse\cap\TT_\fine$,  that
$V_\fine|_{\pi_\coarse^{\rm proj}(T)} \in \set{V_\coarse|_{\pi_\coarse^{\rm proj}(T)}}{V_\coarse\in\XX_\coarse}$.
\end{enumerate}

Besides \eqref{S:inverse}--\eqref{S:local}, we suppose that there exist $\C{sz}>0$ as well as $\kapp\in\N_0$ such that for all $\TT_\coarse\in\T$, there exists a Scott-Zhang-type projector $J_\coarse:H^1_0(\Omega)\to\XX_\coarse$ with the following properties~\eqref{S:proj}--\eqref{S:grad}:
\begin{enumerate}
\renewcommand{\theenumi}{S\arabic{enumi}}
\setcounter{enumi}{3}
\bf\item\rm\label{S:proj}
\textbf{Local projection property.}
With $\kproj\in\N_0$ from \eqref{S:local}, let $\pi_\coarse^{\proj} := \pi_\coarse^{\kproj}$.
For all $v\in H^1_0(\Omega)$ and $T\in\TT_\coarse$, it holds that $(J_\coarse v)|_T = v|_T$, if $v|_{\pi_\coarse^\proj(T)} \in \set{V_\coarse|_{\pi_\coarse^{\proj}(T)}}{V_\coarse\in\XX_\coarse}$.
\bf\item\rm\label{S:app}
\textbf{Local $\boldsymbol{L^2}$-approximation property.}
Let $\pi_\coarse^\mathrm{app}:=\pi_\coarse^{\kapp}$.
For all $T\in\TT_\coarse$ and all $v\in H_0^1(\Omega)$, it holds that $\norm{(1-J_\coarse)v}{L^2(T)}\le \C{sz} \,|T|^{1/d}\,\norm{v}{H^1(\pi_\coarse^\mathrm{app}(T))}$.
%
%\bf\item\rm\label{S:app:H1}
%\textbf{Local $\boldsymbol{H^1}$-approximation property.}
%Let $\pi_\coarse^\mathrm{app}:=\pi_\coarse^{\kapp}$.
%For all $T\in\TT_\coarse$ and all $v\in H_0^1(\Omega)\cap H^2(\Omega)$, it holds that $\norm{\nabla(1-J_\coarse)v}{L^2(T)}\le \Capp \,|T|^{1/d}\,\norm{v}{H^2_{pw}(\Pi_\coarse^\mathrm{app}(T))}$,
%where $\norm{v}{H^2_{pw}(\Pi_\coarse^\mathrm{app}(T))}^2 := \sum_{T'\in\Pi_\coarse^\mathrm{app}(T))}\norm{v}{H^2(T')}^2$.
%
\bf\item\rm\label{S:grad}
\textbf{Local $\boldsymbol{H^1}$-stability.}
Let $\pi_\coarse^\mathrm{grad}:=\pi_\coarse^{\kgrad}$. For all $T\in\TT_\coarse$ and $v\in H_0^1(\Omega)$, it holds that $\norm{\nabla J_\coarse v}{L^2(T)}\le \C{sz} \norm{v}{H^1(\pi_\coarse^\mathrm{grad}(T))}$.
\end{enumerate}
%

%%%%%%%%%%%%%%%%%%%%%%%%%%%%%%%%%%%%%%%%%%%%%%%%%%%%%%%%%%%%%%%%%%%%%%%%%%%%%%%%%%%%%%%%%%%%%
\subsection{Error estimator}\label{subsec:estimator}
%%%%%%%%%%%%%%%%%%%%%%%%%%%%%%%%%%%%%%%%%%%%%%%%%%%%%%%%%%%%%%%%%%%%%%%%%%%%%%%%%%%%%%%%%%%%%
Let $\TT_\coarse\in\T$ and $T_1\in\TT_\coarse$.
For almost every $x\in\partial T_1\cap\Omega$, there exists a unique element $T_2\in\TT_\coarse$ with $x\in T_1\cap T_2$.
We denote the corresponding outer normal vectors by $\nu_1$ resp. $\nu_2$ and define 
%{\color{blue}For almost all $x\in\partial\TT_\coarse := \set{\partial T\cap \partial T'}{T,T'\in\TT_\coarse\text{ with  }T\neq T'}$, choose $\nu(x)$ to be the normal vector of one (arbitrary) element $T\in\TT_\coarse$ with $x\in\partial T$. For almost all $x\in\partial\TT_\coarse$, this guarantees well-posedness of 
the normal jump as 
\begin{align}
 [\AA \nabla U_\coarse\cdot\nu](x)
 = \AA  \nabla U_\coarse|_{T_1}(x)\cdot \nu_1(x)+\AA  \nabla U_\coarse|_{T'}(x)\cdot \nu_2(x).
% \quad\text{for }x\in\partial T\cap\partial T'.
\end{align}
With this definition, we employ the residual {\sl a~posteriori} error estimator
\begin{subequations}\label{eq:eta}
\begin{align}
 \eta_\coarse := \eta_\coarse(\TT_\coarse)
 \quad\text{with}\quad 
 \eta_\coarse(\SS_\coarse)^2:=\sum_{T\in\SS_\coarse} \eta_\coarse(T)^2
 \text{ for all }\SS_\coarse\subseteq\TT_\coarse,
\end{align}
where, for all $T\in\TT_\coarse$, the local refinement indicators read
\begin{align}
\eta_\coarse(T)^2:=|T|^{2/d} \norm{f+\div\AA\nabla U_\coarse-\bb\cdot\nabla U_\coarse-c U_\coarse}{L^2(T)}^2+|T|^{1/d}\norm{[\AA \nabla U_\coarse\cdot \nu]}{L^2(\partial T\cap \Omega)}^2.
\end{align}
\end{subequations}
We refer, e.g., to the monographs~\cite{ainsworth-oden,verfuerth} for the analysis of the residual {\sl a~posteriori} error estimator~\eqref{eq:eta} in the frame of standard FEM with piecewise polynomials of fixed order.

\begin{remark}\label{rem:C11}
If $\XX_\bullet\subset C^1(\overline\Omega)$, then the jump contributions in~\eqref{eq:eta} vanish and $\eta_\coarse(T)$ consists only of the volume residual; see~\cite{bg} in the frame of IGAFEM.\qed
\end{remark}%

\subsection{Data oscillations}
The definition of the data oscillations corresponding to the residual error estimator \eqref{eq:eta} requires some further notation. 
Let $\mathcal{P}(\Omega)\subset H^1(\Omega)$ be a fixed discrete subspace.
We suppose that there exists  $\C{inv}'$ such that the following property~\eqref{O:inverse} holds for all $\TT_\coarse\in\T$:
\begin{enumerate}
\renewcommand{\theenumi}{O\arabic{enumi}}
\bf\item\rm\label{O:inverse}
\textbf{Inverse estimate in dual norm.}
For all ${W}\in\mathcal{P}(\Omega)$, it holds that $|T|^{-1/d} \norm{{W}}{H^{-1}(T)}\le \C{inv}' \, \norm{{W}}{L^2(T)}$, where $\norm{{W}}{H^{-1}(T)}:= \sup\set{\int_T{W} v\,dx}{w\in H_0^1(T)\wedge \norm{\nabla v}{L^2(T)}=1}$.
\end{enumerate}

Besides \eqref{O:inverse}, we suppose that there exists $\C{lift}>0$  such that for all $\TT_\coarse\in\T$ and all $T,T'\in\TT_\coarse$ with $(d-1)$-dimensional intersection $E:=T\cap T'$,  there exists a lifting   operator $ L_{\coarse,E}:\set{W|_E}{W\in\mathcal{P}(\Omega)} \to H_0^1(T\cup T')$ with the following properties~\eqref{O:dual}--\eqref{O:grad}:
\begin{enumerate}
\renewcommand{\theenumi}{O\arabic{enumi}}
\setcounter{enumi}{1}
\bf\item\rm\label{O:dual}
\textbf{Dual inequality.}
For all ${W}\in\mathcal{P}(\Omega)$, it  holds that $\int_E{W}^2\,dx \le \C{lift} \int_E\,L_{\coarse,E} ({{W|_E}})W\,dx$.
\bf\item\rm\label{O:stab}
\textbf{$\boldsymbol{L^2}$-control.}
For all  ${W} \in \mathcal{P}(\Omega)$, it holds that $\norm{L_{\coarse,E}({W|_E})}{L^2(T\cup T')}^2\le \C{lift} |T\cup T'|^{1/d} \norm{{W}}{L^2(E)}^2$.
\bf\item\rm\label{O:grad}
\textbf{$\boldsymbol{H^1}$-control.}
For all  ${W} \in \mathcal{P}(\Omega)$, it holds that $\norm{\nabla L_{\coarse,E}({W|_E})}{L^2(T\cup T')}^2\le  \C{lift} |T\cup T'|^{-1/d} \norm{{W}}{L^2(E)}^2$.
\end{enumerate}

Let $\TT_\coarse\in\T$.
For $T\in\TT_\coarse$, we define the $L^2$-orthogonal projection $P_{\coarse,T}:L^2(T)\to \set{{W}|_{T}}{{W}\in\mathcal{P}(\Omega)}$.
For an interior edge 
$E\in\mathcal{E}_{\coarse,T}:=\set{T\cap T'}{T'\in\TT_\coarse\wedge {\rm dim}(T\cap T')=d-1}$, we define the $L^2$-orthogonal projection  $P_{\coarse,  E}:L^2(E)\to \set{{W}|_{E}}{{W}\in\mathcal{P}(\Omega)}$.
Note that $\bigcup\mathcal{E}_{\coarse,T}=\overline{\partial T\cap \Omega}$.
For $V_\coarse\in\XX_\coarse$, we define  the corresponding oscillations
\begin{subequations}\label{eq:osc}
\begin{align}
 \osc_\coarse(V_\coarse) := \osc_\coarse(V_\coarse,\TT_\coarse)
 \quad\text{with}\quad 
 \osc_\coarse({{V_\coarse}},\SS_\coarse)^2:=\sum_{T\in\SS_\coarse} \osc_\coarse({{V_\coarse}}, T)^2
 \text{ for all }\SS_\coarse\subseteq\TT_\coarse,
\end{align}
where, for all $T\in\TT_\coarse$, the local oscillations read
\begin{align}
\begin{split}
\osc_\coarse({{V_\coarse}},T)^2&:=|T|^{2/d} \norm{(1-P_{\coarse,T})(f+\div\AA\nabla V_\coarse-\bb\cdot\nabla V_\coarse-c V_\coarse)}{L^2(T)}^2\\&\quad+\sum_{E\in\mathcal{E}_{\coarse,T}}|T|^{1/d}\norm{(1-P_{\coarse,E})[\AA \nabla V_\coarse\cdot \nu]}{L^2(E)}^2.
\end{split}
\end{align}
\end{subequations}
We refer, e.g., to~\cite{nv} for the analysis of oscillations in the frame of standard FEM with piecewise polynomials of fixed order.
\begin{remark}\label{rem:C12}
If $\XX_\bullet\subset C^1(\overline\Omega)$, then the jump contributions in~\eqref{eq:osc} vanish and $\osc_\coarse(V_\coarse,T)$ consists only of the volume oscillations; see~\cite{bg} in the frame of IGAFEM.\qed
\end{remark}%

%%%%%%%%%%%%%%%%%%%%%%%%%%%%%%%%%%%%%%%%%%%%%%%%%%%%%%%%%%%%%%%%%%%%%%%%%%%%%%%%%%%%%%%%%%%%%
\subsection{Adaptive algorithm}
%%%%%%%%%%%%%%%%%%%%%%%%%%%%%%%%%%%%%%%%%%%%%%%%%%%%%%%%%%%%%%%%%%%%%%%%%%%%%%%%%%%%%%%%%%%%%
We consider the common formulation of an adaptive mesh-refining algorithm; see, e.g.,~\cite[Algorithm~2.2]{axioms}.

\begin{algorithm}
\label{the algorithm}
\textbf{Input:} 
Adaptivity parameter $0<\theta\le1$ and marking constant $\Cmin\ge 1$.\\
\textbf{Loop:} For each $\ell=0,1,2,\dots$, iterate the following steps~{\rm(i)}--{\rm(iv)}:
\begin{itemize}
\item[\rm(i)] Compute Galerkin approximation $U_\ell\in\XX_\ell$.
\item[\rm(ii)] Compute refinement indicators $\eta_\ell({T})$
for all elements ${T}\in\TT_\ell$.
\item[\rm(iii)] Determine a set of marked elements $\MM_\ell\subseteq\TT_\ell$ which has up to the multiplicative constant $\Cmin$  minimal cardinality such that
$ \theta\,\eta_\ell^2 \le \eta_\ell(\MM_\ell)^2$.
%\end{align}
\item[\rm(iv)] Generate refined mesh $\TT_{\ell+1}:=\refine(\TT_\ell,\MM_\ell)$. 
\end{itemize}
\textbf{Output:} Refined meshes $\TT_\ell$ and corresponding Galerkin approximations $U_\ell$ with error estimators $\eta_\ell$ for all $\ell \in \N_0$.
\end{algorithm}

%%%%%%%%%%%%%%%%%%%%%%%%%%%%%%%%%%%%%%%%%%%%%%%%%%%%%%%%%%%%%%%%%%%%%%%%%%%%%%%%%%%%%%%%%%%%%
\subsection{Main theorem on rate optimal convergence}
\label{sec:axioms}
%%%%%%%%%%%%%%%%%%%%%%%%%%%%%%%%%%%%%%%%%%%%%%%%%%%%%%%%%%%%%%%%%%%%%%%%%%%%%%%%%%%%%%%%%%%%%
We define 
\begin{align}
 \T(N):=\set{\TT_\coarse\in\T}{\#\TT_\coarse-\#\TT_0\le N}
 \quad\text{for all }N\in\N_0
\end{align}
and for all $s>0$
\begin{align}
 \norm{u}{\mathbb{A}_s}
 := \sup_{N\in\N_0}\min_{\TT_\coarse\in\T(N)}(N+1)^s\,\eta_\coarse\in[0,\infty],
\end{align}
and 
\begin{align}
 \norm{u}{\mathbb{B}_s}
 := \sup_{N\in\N_0}\Big(\min_{\TT_\coarse\in\T(N)}(N+1)^s\,\inf_{V_\coarse\in\XX_\coarse} \big(\norm{u-V_\coarse}{H^1(\Omega)}+\osc_\coarse(V_\coarse)\big)\Big)\in[0,\infty].
\end{align}

By definition, $\norm{u}{\mathbb{A}_s}<\infty$ (resp. $\norm{u}{\mathbb{B}_s}<\infty$) implies that the error estimator $\eta_\coarse$ (resp. the total error) on the optimal meshes $\TT_\coarse$ decays at least with rate $\OO\big((\#\TT_\coarse)^{-s}\big)$. The following main theorem states that each possible rate $s>0$ is in fact realized by Algorithm~\ref{the algorithm}.
The (sketch of the) proof is given in Section~\ref{sec:proof abstract}.
It is split into eight steps and builds upon the analysis of~\cite{axioms}.

\begin{theorem}\label{thm:abstract}
\begin{itemize}
\item[\rm (i)]
Suppose \eqref{M:patch}--\eqref{M:trace} and \eqref{S:app}--\eqref{S:grad}.
Then, 
the residual error estimator~\eqref{eq:eta} satisfies reliability, i.e., there exists a constant $\Crel>0$ such that
\begin{align}\label{eq:reliable}
 \norm{u-U_\coarse}{H^1(\Omega)}+\osc_\bullet\le \Crel\eta_\coarse\quad\text{for all }\TT_\coarse\in\T.
\end{align}
\item[(ii)]

Suppose \eqref{M:shape}--\eqref{M:trace}, \eqref{S:inverse}, and   \eqref{O:inverse}--\eqref{O:grad}.
Then, 
the residual error estimator satisfies efficiency, i.e., there  exists  a  constant $\C{eff}>0$ such  that
\begin{align}\label{eq:efficient}
\C{eff}^{-1}\eta_\coarse\le \inf_{V_\coarse\in\XX_\coarse}\big( \norm{u-V_\coarse}{H^1(\Omega)}+\osc_\coarse(V_\coarse)\big).%\le \C{eq} \eta_\coarse\quad \text{for all }\TT_\coarse\in\T.
\end{align}
\item[\rm(iii)]
Suppose  \eqref{M:shape}--\eqref{M:trace}, \eqref{R:union}--\eqref{R:reduction}, and \eqref{S:inverse}--\eqref{S:grad}.
Then, for arbitrary $0<\theta\le1$ and $\Cmin\ge1$, there exist constants $\Clin>0$ and $0<\qlin<1$ such that the estimator sequence of Algorithm~\ref{the algorithm} guarantees linear convergence in the sense of
\begin{align}\label{eq:linear}
\eta_{\ell+j}^2\le \Clin\qlin^j\eta_\ell^2\quad\text{for all }j,\ell\in\N_0.
\end{align}
\item[\rm (iv)]
Suppose \eqref{M:shape}--\eqref{M:trace},  \eqref{R:sons}--\eqref{R:overlay}, and \eqref{S:inverse}--\eqref{S:grad}.
Then, there exists a constant $0<\theta_\opt\le1$ such that for all $0<\theta<\theta_\opt$, all $\Cmin\ge1$, and all $s>0$, there exist constants $\copt,\Copt>0$ such that
\begin{align}\label{eq:optimal}
 \copt\norm{u}{\mathbb{A}_s}
 \le \sup_{\ell\in\N_0}{(\# \TT_\ell-\#\TT_0+1)^{s}}\,{\eta_\ell}
 \le \Copt\norm{u}{\mathbb{A}_s},
\end{align}
i.e., the estimator sequence will decay with each possible rate $s>0$. 
\end{itemize}
\noindent All involved constants $\C{rel},\C{eff},\C{lin},q_{\rm lin},\theta_{\rm opt},\copt,$ and $\Copt$ depend only on the assumptions made as well as the coefficients of the differential operator $\LL$ and $\diam(\Omega)$, where $\Clin,\qlin$ depend additionally on $\theta$ and the sequence $(U_\ell)_{\ell\in\N_0}$, and $\copt,\Copt$ depend furthermore on $\Cmin$, and $s>0$.

%The constant $\Crel$  depends only on the constants from~\eqref{M:shape}--\eqref{M:trace}, and \eqref{S:inverse}--\eqref{S:grad} as well as on the coefficients of the differential operator $\LL$ and $\diam(\Omega)$, while $\Clin,\qlin$ depend additionally on the constants from \eqref{R:union}--\eqref{R:reduction}, $\theta$ and the sequence $(U_\ell)_{\ell\in\N_0}$, and $\copt,\Copt$ depend furthermore on the constants from \eqref{R:sons}, \eqref{R:closure}--\eqref{R:overlay}, $\theta$, $\Cmin$, and $s>0$. 
%In addition to the parameters on which $\Crel$ and $\theta_\opt$ depend, $\C{eff}$ hinges on the constants from \eqref{O:inverse}--\eqref{O:grad}.
%Add effiziency if possible!
\end{theorem}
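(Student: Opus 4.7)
The plan is to recognize that the assumptions on meshes~\eqref{M:shape}--\eqref{M:trace}, refinement~\eqref{R:sons}--\eqref{R:overlay}, and spaces~\eqref{S:inverse}--\eqref{S:grad} together with~\eqref{O:inverse}--\eqref{O:grad} are tailored precisely so that the abstract framework of~\cite{axioms} applies, and then to reduce each of the four statements to verifying the corresponding axiom. For reliability~(i), I would test the weak formulation with $v := (1-J_\coarse)(u-U_\coarse)$, exploit Galerkin orthogonality~\eqref{eq:galerkin}, perform element-wise integration by parts to expose volume residuals and normal-jump residuals, and bound the resulting pairings via the local $L^2$-approximation~\eqref{S:app} and $H^1$-stability~\eqref{S:grad} of $J_\coarse$, together with the trace inequality~\eqref{M:trace} and the bounded patch~\eqref{M:patch}. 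The oscillation term appearing on the left of~\eqref{eq:reliable} is dominated by $\eta_\coarse$ term-by-term, since $P_{\coarse,T}$ and $P_{\coarse,E}$ are $L^2$-contractions. For efficiency~(ii), I would mimic the Verf\"urth bubble-function argument abstractly: on each element I replace the volume residual by its $\mathcal{P}(\Omega)$-projection (the remainder is an oscillation) and apply the dual inverse estimate~\eqref{O:inverse} after testing against a suitable $w\in H^1_0(T)$; for each interior edge I use the lifting $L_{\coarse,E}$ whose axioms~\eqref{O:dual}--\eqref{O:grad} are designed to reproduce the estimates for edge-bubble functions. The inverse estimate~\eqref{S:inverse} on $\XX_\coarse$ is then used to pass from the second derivatives of $V_\coarse$ arising after integration by parts back to $\norm{V_\coarse}{H^1}$.

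For linear convergence~(iii), I would verify the three axioms of~\cite{axioms}: \emph{stability on non-refined elements} (the difference $|\eta_\fine(T)-\eta_\coarse(T)|$ is controlled by $\norm{U_\fine-U_\coarse}{H^1(\Omega)}$ restricted to $\pi_\coarse(T)$ via the scaled trace inequality~\eqref{M:trace}, shape regularity~\eqref{M:shape}, and the inverse estimate~\eqref{S:inverse}); \emph{reduction on refined elements} (each refined son satisfies $|T'|\le\qson|T|$ by~\eqref{R:reduction} combined with~\eqref{R:union}, which gives a uniform indicator reduction factor through the weights in~\eqref{eq:eta}); and \emph{general quasi-orthogonality} for the non-symmetric form $\edual{\cdot}{\cdot}$, which I would obtain by combining the C\'ea estimate~\eqref{eq:cea} and nestedness~\eqref{S:nestedness} with a compactness/perturbation argument in the spirit of~\cite{ffp14,helmholtz}. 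Together with the D\"orfler marking in Algorithm~\ref{the algorithm}, the abstract theorem of~\cite{axioms} then yields~\eqref{eq:linear} for arbitrary $\theta>0$, $\Cmin\ge1$.

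For optimality~(iv), the additional ingredient is \emph{discrete reliability}: with $\RR_\coarse := \Pi_\coarse^{\rm loc}(\TT_\coarse\setminus\TT_\fine)$, I must bound the Galerkin difference $\norm{U_\fine-U_\coarse}{H^1(\Omega)}$ by $\eta_\coarse$ restricted to $\RR_\coarse$, where $\#\RR_\coarse\lesssim\#(\TT_\coarse\setminus\TT_\fine)$ is guaranteed by~\eqref{M:patch}. The crucial point is the localization axiom~\eqref{S:local} combined with the local projection property~\eqref{S:proj}: together they ensure $J_\coarse U_\fine = U_\fine$ on $\TT_\coarse\setminus\RR_\coarse$, so the reliability argument from~(i) localizes to $\RR_\coarse$. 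Combining discrete reliability with the linear convergence from~(iii), the closure estimate~\eqref{R:closure}, the overlay estimate~\eqref{R:overlay}, and the quasi-optimality of the D\"orfler set selection, the abstract optimality theorem of~\cite{axioms} then delivers~\eqref{eq:optimal}. The main technical obstacle throughout is the non-symmetric bilinear form: both the quasi-orthogonality needed in~(iii) and the control of the energy difference in the discrete reliability of~(iv) require the perturbation-type arguments of~\cite{ffp14,helmholtz}, because no Pythagoras identity is available to replace the orthogonality-based estimates that were sufficient for the symmetric setting in~\cite{bg}.
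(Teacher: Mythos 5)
Your proposal is correct and follows essentially the same route as the paper: reliability via Galerkin orthogonality and the Scott--Zhang projector with elementwise integration by parts, efficiency via the abstract bubble/lifting argument of~\cite{nv} under \eqref{O:inverse}--\eqref{O:grad}, and linear convergence plus optimality by verifying stability, reduction, general quasi-orthogonality (perturbation argument of~\cite{ffp14} for the non-symmetric part), and discrete reliability localized through \eqref{S:local}--\eqref{S:proj}, then invoking \cite{axioms}. The only point you gloss over is that efficiency in the stated infimum form additionally needs the oscillation quasi-monotonicity $\osc_\coarse(U_\coarse)\lesssim\osc_\coarse(V_\coarse)+\norm{U_\coarse-V_\coarse}{H^1(\Omega)}$ (from \eqref{S:inverse}) combined with the C\'ea lemma, which the paper spells out explicitly.
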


\begin{remark}
If the assumptions of Theorem~\ref{thm:abstract} {\rm (i)--(ii)} are  satisfied, there holds in particular
\begin{align}\label{eq:equivalence}
\C{eff}^{-1}\norm{u}{\mathbb{A}_s}\le\norm{u}{\mathbb{B}_s}\le \C{rel}\norm{u}{\mathbb{A}_s}\quad\text{for all }s>0.
\end{align}
\end{remark}

\begin{remark}
Note that almost minimal cardinality of $\MM_\ell$ in Step {\rm(iii)} of Algorithm \ref{the algorithm} is only required to prove optimal convergence behavior \eqref{eq:optimal}, while linear convergence \eqref{eq:linear} formally allows $\Cmin=\infty$, i.e., it suffices that $\MM_\ell$ satisfies the D\"orfler marking criterion in Step {\rm (iii)}.
We refer to \cite[Section 4.3--4.4]{axioms} for details.\qed
%In particular, uniform mesh-refinement leads to convergence, i.e., $\#\set{\ell\in\N_0}{\MM_\ell=\TT_\ell}=\infty$ yields that $\norm{u-U_\ell}{H^1(\Omega)}\lesssim\eta_\ell\lesssim\qlin^\ell \eta_0\to0$ as $\ell\to\infty$.
\end{remark}

\begin{remark}
{\rm(a)} If the bilinear form $\edual{\cdot}{\cdot}$ is symmetric, $\Clin$, $\qlin$ as well as $\copt$, $\Copt$ are then independent of $(U_\ell)_{\ell\in\N_0}$; see Remark~\ref{rem:E3} below.

{\rm(b)} If the bilinear form $\edual{\cdot}{\cdot}$ is non-symmetric, there exists an index $\ell_0\in\N_0$ such that the constants $\Clin$, $\qlin$ as well as $\copt$, $\Copt$ are independent of $(U_\ell)_{\ell\in\N_0}$, if \eqref{eq:linear}--\eqref{eq:optimal} are formulated only for $\ell\ge\ell_0$. We refer to the recent work~\cite[Theorem~19]{helmholtz}.\qed
\end{remark}%

\begin{remark}
If $\XX_\bullet\subset C^1(\overline\Omega)$, all jump contributions  vanish; see Remark~\ref{rem:C11} and Remark~\ref{rem:C12}.
In this case, the assumptions \eqref{O:dual}--\eqref{O:grad} are not necessary for the proof of \eqref{eq:efficient}.\qed
\end{remark}

\begin{remark}
{\rm(a)} Let $h_\ell:=\max_{T\in\TT_\ell}|T|^{1/d}$ be the maximal mesh-width. Then, $h_\ell\to0$ as $\ell\to\infty$, ensures that
%If $\#\set{\ell\in\N_0}{\MM_\ell=\TT_\ell}=\infty$, there holds 
$\XX_\infty:=\overline{\bigcup_{\ell\in\N_0}\XX_\ell}=H_0^1(\Omega)$.
To see this, recall that~\eqref{S:nestedness} ensures that $\bigcup_{\ell\in\N_0}\XX_\ell$ is a vector space and, in particular, convex.
By Mazur's lemma (see, e.g., \cite[Theorem 3.12]{rudin}),  it is thus sufficient to show that $\bigcup_{\ell\in\N_0}\XX_\ell$ is weakly dense in $H_0^1(\Omega)$.
Let $v\in H_0^1(\Omega)$.
The Banach-Alaoglu theorem (see, e.g., \cite[Theorem 3.15]{rudin}) together with \eqref{M:patch} and \eqref{S:app}--\eqref{S:grad} proves that each subsequence $(J_{\ell_m}v)_{m\in\N_0}$  admits a further subsequence $(J_{\ell_{m_n}}v)_{n\in\N_0}$ which is weakly convergent in $H^1_0(\Omega)$ towards some limit $w\in H^1_0(\Omega)$. The Rellich compactness theorem hence implies $\norm{w-J_{\ell_{m_n}}v}{L^2(\Omega)}\to0$ as $n\to\infty$. On the other hand,
%It remains to show that all these subsequences of subsequences have the same weak limit in $H_0^1(\Omega)$.
\eqref{S:app} together with \eqref{M:patch}, \eqref{R:sons}--\eqref{R:reduction}, and $h_\ell\to0$ shows that $\norm{v-J_\ell v}{L^2(\Omega)}\lesssim h_\ell\,\norm{v}{H^1(\Omega)}\to0$ as $\ell\to\infty$.
%$(J_\ell v)_{\ell\in\N_0}$ converges to $v$ with respect to $\norm{\cdot}{L^2(\Omega)}$.
%The Rellich compactness theorem and the fact, that weak convergence is transformed into strong convergence by a compact mapping, show that the weak limit of  $(J_{\ell_{m_n}})_{n\in\N_0}$ is just $v$.
%This concludes the proof.
Together with the uniqueness of limits, these two observations conclude $v=w$. Overall, each subsequence $(J_{\ell_{m}}v)_{m\in\N_0}$ of $(J_\ell v)_{\ell\in\N}$ admits a further subsequence $(J_{\ell_{m_n}}v)_{n\in\N_0}$ which converges weakly in $H^1_0(\Omega)$ to $v$. Basic calculus thus yields that $J_\ell v\rightharpoonup v$ weakly in $H^1_0(\Omega)$ as $\ell\to\infty$. This concludes the proof.

{\rm(b)} We note that the latter observation allows to follow the ideas of~\cite{helmholtz} and to show that the adaptive algorithm yields convergence even if the bilinear form $\dual\cdot\cdot_\LL$ is only elliptic up to some compact perturbation, provided that the continuous problem is well-posed. This includes, e.g., adaptive FEM for the Helmhotz equation. For details, the reader is referred  to~\cite{helmholtz}.\qed%
\end{remark}

% !TEX encoding = MacOSRoman
%!TEX root = igafem.tex

\section{Hierarchical setting}\label{sec:hierarchical}

%In this section, we define hierarchical meshes and splines and see that they fit in the abstract framework of Section~\ref{sec:abstract setting}.
%This especially allows us to consider the corresponding adaptive algorithm, Algorithm~\ref{the algorithm}.
%We also state our main result which says that this algorithm converges linearly with optimal rates.
In this section, we recall the definition of hierarchical (B-)splines from \cite{juttler} and propose a local mesh-refinement strategy.
The main result of this section is Theorem~\ref{thm:main} which states that hierarchical splines  together with the proposed mesh-refinement strategy fit into the abstract setting of Section~\ref{sec:abstract setting} and are hence covered by Theorem~\ref{thm:abstract}.
The proof of Theorem~\ref{thm:main} is given in Section~\ref{sec:proof}.

\subsection{Nested tensor meshes and splines}\label{subsec:splines}
We define the parameter domain $\widehat{\Omega}:=(0,1)^d$.
Let $p_1,\dots,p_d\ge 1$ be  fixed polynomial degrees with $p:=\max_{i=1,\dots,d}p_i$.
Let $\widehat\KK^0$ be an arbitrary fixed $d$-dimensional vector of $p_i$-open knot vectors with multiplicity smaller or equal to $p_i$ for the interior knots, i.e.,
\begin{align}
\widehat\KK^0=(\widehat\KK^0_1\dots,\widehat\KK^0_d),
\end{align}
where $\widehat\KK^0_i=(t^0_{i,j})_{j=0}^{N^0_i+p_i}$ is a non-decreasing vector in $[0,1]$ such that $t^0_{i,0}=\dots=t^0_{i,p_i}=0$, $t^0_{i,N^0_i}=\dots=t^0_{i,N^0_i+p_i}$, and $\# t^0_{i,j}:=\#\set{k\in\{0,\dots,N_i^0+p_i\}}{t_{i,k}^0=t_{i,j}^0}\le p_i$ for $j=p_i+1,\dots,N^0_i-1$.
For $k\in\N_0$, we recursively define $\widehat\KK^{k+1}$ as the uniform $h$-refinement of $\widehat\KK^{k}$, i.e., it is obtained by inserting the knot $\frac{t_{i,j}^k+t_{i,j+1}^k}{2}$ of multiplicity one in each  knot span $[t^k_{i,j},t^k_{i,j+1}]$ with $t^k_{i,j}\neq t^k_{i,j+1}$.
Let $\widehat\BB^k$ be the corresponding tensor-product B-spline basis, i.e., 
%The definition of any one-dimensional B-spline depends only on the so called local knot  
%For arbitrary $n\ge 2$ and arbitrary   monotonously decreasing vectors $(t_1,\dots,t_n)$, we write $B(\cdot|t_1,\dots,t_n)$ for the corresponding B-spline.
\begin{align}
\widehat\BB^k=\set{\widehat B^k_{j_1,\dots,j_d}}{j_i\in\{1,\dots,N_i^k\}}, 
\end{align}
where for $(s_1,\dots,s_d)\in \R^d$
\begin{align}\label{eq:B-spline}
\widehat B^k_{j_1,\dots,j_d}(s_1,\dots,s_d):=\prod_{i=1}^d \widehat B(s_i|t_{i,j_i-1}^k,\dots,t_{i,j_i+p_i}^k), %\dots B(s_d|t_{d,j_d}^k,\dots,t_{d,j_d+p_d+1}^k)
\end{align}
where $\widehat B(\cdot|t_{i,j_i-1}^k,\dots,t_{i,j_i+p_i}^k)$ denotes the one-dimensional B-spline corresponding to the local knot vector $(t_{i,j_i-1}^k,\dots,t_{i,j_i+p_i}^k)$.
It is well known that the function in $\widehat\BB^k$ have support  $\supp(\widehat B^k_{j_1,\dots,j_d})=[t^k_{1,j_1-1},t^k_{1,j_1+p_1}]\times\dots\times [t^k_{d,j_d-1},t^k_{d,j_d+p_d}]\subseteq[0,1]^d$,  form a partition of unity, and   are even locally linearly independent, i.e, for any open set $O\subseteq [0,1]^d$, the restricted B-splines  $\set{\widehat\beta|_{O}}{\widehat\beta\in\widehat\BB^k\wedge \supp(\widehat\beta)\cap O\neq\emptyset}$ are linearly independent. 
Let $\widehat\YY^k:={\rm span}(\widehat\BB^k)$.
This yields a nested sequence of tensor-product $d$-variate spline function spaces $(\widehat\YY^k)_{k\in\N_0}$ that are at least Lipschitz continuous 
\begin{align}\widehat\YY^k\subset\widehat\YY^{k+1}\subset W^{1,\infty}({\widehat\Omega}).\end{align}
In particular,  each $\widehat\beta^k\in\widehat\BB^k$ can be written as linear combination of functions in $\widehat\BB^{k+1}$, i.e., it has a  unique representation of the form 
\begin{align}\label{eq:twoscale1}
\widehat\beta^k=\sum_{\widehat\beta^{k+1}\in\widehat\BB^{k+1}} c_{\widehat\beta^{k+1}}\widehat\beta^{k+1}.
\end{align}

By the knot insertion procedure, one can show that these coefficients satisfy

\begin{align}\label{eq:twoscale}
\sum_{\widehat\beta^{k+1}\in\widehat\BB^{k+1}} c_{\widehat\beta^{k+1}}=1\quad\text{and}\quad c_{\widehat\beta^{k+1}}\ge 0.
%\quad\text{for all }\widehat\beta^{k+1}\in\widehat\BB^{k+1}.
\end{align}

%one can show that the corresponding coefficients are nonnegative and sum up to one,
 see, e.g., \cite[Section 2.1.3]{variational}.
%In \cite{speleers}, this property is refered to as  two-scale relation with only nonnegative coefficients between bases of consecuitive levels.
%Finally, we define with the product of knot spans
With
\begin{align}
\widehat T^k_{j_1,\dots,j_d}:=[t^k_{1,j_1-1},t^k_{1,j_1}]\times\dots\times [t^k_{d,j_d-1},t^k_{d,j_d}] \text{ for all } j_i=1,\dots,N^k_i+p_i,\quad i=1,\dots,d,
\end{align}
we define 
 the corresponding  set of all non-trivial closed cells of level $k$ as 
\begin{align}
\widehat\TT^k:=\set{\widehat T^k_{j_1,\dots,j_d}}{|\widehat T^k_{j_1,\dots,j_d}|>0\wedge j_i=1,\dots,N^k_i+p_i\wedge i=1,\dots,d}.
\end{align}
Each function in $\widehat\YY^k$ is a $\widehat\TT^k$ piecewise polynomial, where the smoothness across the boundary of an element $\widehat T$ depends only on the corresponding knot multiplicities.
For a more detailed presentation of tensor-product splines, we refer to, e.g., \cite{boor,schumaker,variational}.

\subsection{Hierarchical meshes and splines in the parameter domain $\bold{\widehat{\Omega}}$}\label{subsec:parameter hsplines}
%To define a mesh $\TT_\bullet$ and a corresponding space $\XX_\bullet$, consider them in the parameter domain.
Meshes $\TT_\coarse$ and corresponding spaces $\XX_\coarse$ are defined through their counterparts on the parameter domain $\widehat\Omega:=(0,1)^d$.
Let $(\widehat\Omega^k_\bullet)_{k\in\N_0}$ be a nested sequence of closed subsets of $\overline{\widehat\Omega}= [0,1]^d$ such that 
\begin{align}\widehat\Omega^0_\bullet=\overline{\widehat{\Omega}} \quad\text{and}\quad \widehat\Omega^k_\bullet\supseteq\widehat\Omega^{k+1}_\bullet.\end{align} We suppose that  
%$\widehat\Omega^0_\bullet$ is the union of a selection of cells of level $0$ and 
for $k>0$ each $\widehat\Omega_\bullet^k$ is the union of a selection of cells of level $k-1$, i.e., 
\begin{align}
%\widehat\Omega_\bullet^0=\bigcup\set{\widehat T\in\widehat\TT^0}{\widehat T\subseteq\widehat\Omega_\bullet^0},\quad
\widehat\Omega_\bullet^{k}=\bigcup\set{\widehat T\in\widehat\TT^{k-1}}{\widehat T\subseteq\widehat\Omega_\bullet^k}.
\end{align}
Moreover, 
%we assume that  all $\widehat\Omega^k_\bullet$ are empty, i.e., 
we assume the existence of some minimal $M_\bullet>0$ such that $\widehat\Omega_\bullet^{M_\bullet}=\emptyset$.
Then, we define the mesh in the parameter domain
\begin{align}\label{eq:parameter mesh}
\widehat\TT_\bullet:=\bigcup_{k\in\N_0}\set{\widehat T\in\widehat\TT^k }{\widehat T\subseteq \widehat \Omega_\bullet^k\wedge \widehat T\not\subseteq\widehat\Omega_\bullet^{k+1}}.
\end{align}
\begin{figure}[t] 
\psfrag{z}[r][r]{z}
\psfrag{T}[r][r]{T}
\begin{center}
\includegraphics[width=0.4\textwidth]{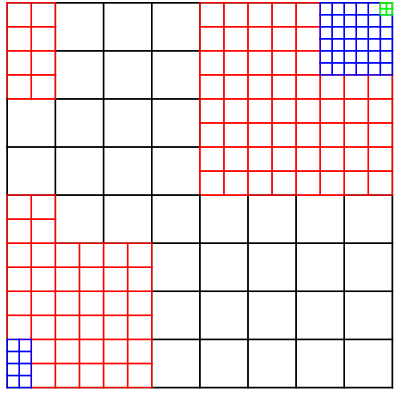}
\end{center}
\caption{
A two-dimensional hierarchical mesh $\widehat\TT_\bullet$ on the parameter domain is depicted, where $\widehat\Omega_\bullet^4=\emptyset$.
The corresponding  domains $\widehat\Omega^0_\bullet\supseteq\widehat\Omega^1_\bullet\supseteq\widehat\Omega^2_\bullet\supseteq\widehat\Omega^3_\bullet$ are highlighted in black, red, blue and green.
}
\label{fig:mesh}
\end{figure}
Note that $\widehat\TT^k\cap\widehat\TT^{k'}=\emptyset$ for $k\neq k'\in\N_0$. 
For $\widehat T\in\widehat\TT_\coarse$,  there exists  a unique  $\level(\widehat T):=k\in\N_0$  
with $\widehat T\subseteq \widehat \Omega_\bullet^k$ and $\widehat T\not\subseteq\widehat\Omega_\bullet^{k+1}$.
Note that $\widehat\TT_\bullet$ is a mesh of $\widehat \Omega$
%, i.e., a partition of $\widehat\Omega$ into closed Lipschitz domains. 
in the sense of Section~\ref{subsec:general mesh}.
%The basis functions  $\widehat\beta\in\widehat\BB^k$, $k\in\N_0$, are actually defined on the set $[0,1]^d$.
%However, since we consider a subset as parameter domain, we will always consider $\widehat\beta\in\widehat\BB^k$ as functions on the parameter domain $\overline{\widehat\Omega}$.

With these preparations, one inductively defines the set of all hierarchical B-splines in the parameter domain $\widehat\HH_\bullet:=\widehat \HH_\coarse^{M_\coarse-1}$ as follows:
\begin{itemize}
\item[(i)]  Define  $\widehat\HH^0_\bullet:=\widehat\BB^0$.
\item[(ii)] For $k=0,\dots,M_\bullet-2$,  define $\widehat\HH_\bullet^{k+1}:={\rm old}(\widehat\HH_\bullet^{k+1})\cup{\rm new}(\widehat\HH_\bullet^{k+1})$, where 
\begin{align}
\begin{split}
{\rm old}(\widehat\HH^{k+1}_\bullet)&:=\set{\widehat\beta\in\widehat\HH_\bullet^k}{\supp(\widehat\beta)\not\subseteq\widehat\Omega^{k+1}_\bullet},\\
{\rm new}(\widehat\HH^{k+1}_\bullet)&:=\set{\widehat\beta\in\widehat\BB^{k+1}}{\supp(\widehat\beta)\subseteq\widehat\Omega_\bullet^{k+1}}.
\end{split}
\end{align}
%\item[(iii)] Finally, we set $\widehat\HH_\bullet:=\widehat\HH_\bullet^{M_\bullet-1}$.
\end{itemize}
%Clearly, this definition does not depend on the chosen $M_\bullet$ with $\widehat\Omega_\bullet^{M_\bullet}=\emptyset$.
One can prove that the so-called hierarchical basis $\widehat\HH_\bullet$ is linearly independent; see \cite[Lemma~2]{juttler}.
By definition, it holds that
\begin{align}\label{eq:short cHH}
\widehat\HH_\bullet=\bigcup_{k\in\N_0}\set{\widehat\beta\in\widehat\BB^k}{ \supp(\widehat\beta)\subseteq\widehat\Omega_\bullet^k\wedge\supp(\widehat\beta)\not\subseteq\widehat\Omega_\bullet^{k+1}}.
\end{align}
Note that $\widehat\BB^k\cap\widehat\BB^{k'}=\emptyset$ for $k\neq k'\in\N_0$.
For $\widehat\beta\in\widehat\HH_\coarse$, there exists a unique $\level(\widehat\beta):=k\in\N_0$ with $\supp(\widehat\beta)\subseteq\widehat\Omega_\bullet^k$ and $\supp(\widehat\beta)\not\subseteq\widehat\Omega_\bullet^{k+1}$ .
%We call it the  level of $\widehat \beta$ and write  $\level(\widehat \beta)$.

The hierarchical basis  $\widehat\HH_\bullet$ and the mesh $\widehat\TT_\bullet$ are compatible in the following sense:
For all $\widehat\beta\in\widehat\HH_\bullet$, the corresponding support can be written as union of elements in $\widehat\TT^{\level(\widehat\beta)}$, i.e., 
\begin{align}
\supp(\widehat\beta)=\bigcup\set{\widehat T\in\widehat\TT^{\level(\widehat\beta)}}{\widehat T\subseteq\supp(\widehat\beta)}.
\end{align}
Each such element $\widehat T\in\widehat\TT^{\level(\widehat\beta)}$ with $\widehat T\subseteq\supp(\widehat\beta)\subseteq \widehat\Omega_\bullet^{\level(\widehat\beta)}$ satisfies $\widehat T\in\widehat\TT_\bullet$ or $\widehat T\subseteq\widehat\Omega_\bullet^{\level(\widehat\beta)+1}$.
In either case, we see that $\widehat T$ 
can be written as union of elements in $\widehat\TT_\bullet$ with level greater or equal to $\level(\widehat\beta)$.
%can itself be written as the union of elements in $\widehat\TT_\bullet$.
Altogether, we have 
\begin{align}\label{eq:supp elements}
\supp(\widehat\beta)=\bigcup_{k\ge \level(\widehat\beta)}\set{\widehat T\in\widehat\TT_\bullet\cap\widehat\TT^k}{\widehat T\subseteq\supp(\widehat\beta)}.
\end{align}
Moreover, $\supp(\widehat\beta)$ must contain at least one element of level $\level(\widehat\beta)$, otherwise one would get the contradiction $\supp(\widehat\beta)\subseteq\widehat\Omega_\bullet^{\level(\widehat\beta)+1}$.
In particular, this shows that 
\begin{align}\label{eq:level beta is}
\level(\widehat\beta)=\min_{\widehat T\in\widehat\TT_\bullet\atop \widehat T\subseteq\supp(\widehat\beta)}\level(\widehat T)\quad\text{for all } \widehat\beta\in\widehat\HH_\bullet.
\end{align}

Define  the space of hierarchical splines in the parameter domain by $\widehat\YY_\bullet:={\rm span}(\widehat\HH_\coarse)$.
According to \eqref{eq:supp elements}, each $\widehat V_\bullet\in\widehat\YY_\bullet$ is a $\widehat{\TT}_\bullet$-piecewise tensor polynomial of degree $(p_1,\dots,p_d)$.
We define our ansatz space in the parameter domain as 
\begin{align}
\widehat\XX_\bullet:=\set{\widehat V_\bullet\in\widehat\YY_\bullet}{\widehat V_\bullet|_{\partial\widehat\Omega}=0}\subset\widehat\YY_\coarse\subset \set{\widehat v\in W^{1,\infty}_0({\widehat\Omega})}{\widehat v|_{\widehat T}\in C^2(\widehat T)\text{ for all }\widehat T\in\widehat\TT_\bullet}.
\end{align}
%where $C^2(\widehat\TT_\bullet):=\set{\widehat v:\overline{\widehat\Omega}\to\R}{\widehat v|_{\widehat T}\in C^2(\widehat T)\text{ for all }\widehat T\in\widehat\TT_\bullet}$.
Note that this specifies the abstract setting of Section~\ref{subsec:ansatz}.
 For a more detailed introduction to hierarchical meshes and splines, we refer to, e.g., \cite{juttler,garau,speleers}.

\subsection{Basis of $\widehat\XX_\bullet$}\label{section:basis}
In this section, we characterize a basis of the hierarchical splines $\widehat\XX_\bullet$ that vanish on the boundary.
To this end, we first determine the restriction of the hierarchical basis $\widehat\HH_\bullet$ to a facet of the boundary.
%For $d>1$, 
It turns out that  this restriction coincides with the set of $(d-1)$-dimensional hierarchical B-splines.
\begin{proposition}\label{prop:restriction}
Let $\widehat\TT_\bullet$ be an arbitrary hierarchical mesh on the parameter domain $\widehat\Omega$. 
%For $d=1$ and $e\in\{0,1\}$, there exists exactly one $\widehat\beta\in\widehat\HH_\bullet$ with $\widehat\beta(e)\neq0$, and  it even holds that $\widehat\beta(e)=1$.
For
%$d>1$ and
 $E=[0,1]^{I-1}\times\{e\}\times{[0,1]^{d-I}}$ with some $I\in\{1,\dots,d\}$ and some  $e\in\{0,1\}$, set 
%\begin{align*}
$\widehat\KK^0|_E:=(\widehat\KK^0_1,\dots,\widehat\KK^0_{I-1}\widehat\KK^0_{I+1},\dots,\widehat\KK^0_{d}),$
%\end{align*}
 and 
% \begin{align*}
$ \widehat\Omega_\bullet^k|_E:=\set{(s_1,\dots,s_{I-1},s_{I+1},\dots,s_d)}{(s_1,\dots,s_d)\in\widehat\Omega_\bullet^k\cap E}$ for $k\in\N_0$.
% \end{align*}
 Moreover, let $\widehat\TT_\coarse|_E$ be  the corresponding hierarchical mesh and  $\widehat\HH_\coarse|_E$   the corresponding hierarchical basis.
%Then, the restriction $\set{\widehat T\cap( [0,1]^{d-1}\times\{0\})}{\widehat T\in\widehat\TT_\bullet}$ is a $(d-1)$-dimensional hierarchical mesh and 
Then, there holds\footnote{Actually, the set on left-hand side consists of functions defined  on $[0,1]^{d-1}$, whereas the right-hand side functions are defined on $E$.
However, clearly these functions can be identified.
}
$\widehat\HH_\bullet|_E=\set{\widehat\beta|_E}{\widehat\beta\in\widehat\HH_\bullet\wedge \widehat\beta|_E\neq0}$.
Moreover, the restriction $\widehat\HH_\coarse\to\widehat\HH_\coarse|_E$ is essentially injective, i.e.,    for $\widehat\beta_1,\widehat\beta_2\in\widehat\HH_\bullet$ with $\widehat\beta_1\neq\widehat \beta_2$ and $\widehat\beta_1|_E\neq 0$, it follows that $ \widehat\beta_1|_E\neq\widehat\beta_2|_E$.
\end{proposition}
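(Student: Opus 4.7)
My plan is to reduce the statement to an index bijection between the $d$-variate tensor-product B-splines that survive restriction to $E$ and the $(d-1)$-variate tensor-product B-splines on $E$, and then to verify that this bijection is compatible with the subdomain sequence $(\widehat\Omega^k_\bullet)_{k\in\N_0}$. By symmetry it suffices to treat the case $I=1$, $e=0$; the other cases are analogous after relabelling coordinates. The starting point is that the $p_1$-openness of $\widehat\KK^k_1$ forces $\widehat B(0\,|\,t^k_{1,j_1-1},\dots,t^k_{1,j_1+p_1})$ to vanish unless $j_1=1$, and to equal $1$ when $j_1=1$. Combined with \eqref{eq:B-spline}, this shows $\widehat B^k_{j_1,\dots,j_d}|_E\ne 0$ iff $j_1=1$, and in this case $\widehat B^k_{1,j_2,\dots,j_d}|_E$ equals the $(d-1)$-variate tensor-product B-spline $\widetilde B^k_{j_2,\dots,j_d}$ on $E$ built from the restricted knot vector $\widehat\KK^k|_E$.

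Next I would analyse the mesh-cell structure of $\supp(\widehat B^k_{1,j_2,\dots,j_d})$. Because $t^k_{1,0}=\dots=t^k_{1,p_1}=0<t^k_{1,p_1+1}$, the only positive-measure level-$k$ cells contained in this support are of the form $\widehat T^k_{p_1+1,\ell_2,\dots,\ell_d}$ with $\ell_i\in\{j_i,\dots,j_i+p_i\}$; every one of them touches $E$, and its intersection with $E$ (identified with $[0,1]^{d-1}$) is exactly the $(d-1)$-variate level-$k$ cell $\widetilde T^k_{\ell_2,\dots,\ell_d}$ in $\supp(\widetilde B^k_{j_2,\dots,j_d})$. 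This yields the desired bijection between positive-measure level-$k$ subcells of $\supp(\widehat\beta)$ and those of $\supp(\widehat\beta|_E)$.

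The crux of the argument, and what I expect to be the main obstacle, is to show that this bijection is compatible with the subdomain sequence, namely
\begin{align*}
\widehat T^k_{p_1+1,\ell_2,\dots,\ell_d}\subseteq \widehat\Omega^m_\bullet \quad\Longleftrightarrow\quad \widetilde T^k_{\ell_2,\dots,\ell_d}\subseteq \widehat\Omega^m_\bullet|_E \qquad\text{for every }m\in\N_0.
\end{align*}
The delicate direction is ``$\Leftarrow$''. Since $\widehat\Omega^m_\bullet$ is a union of closed level-$(m-1)$-cells and hence also a union of closed level-$k$ cells whenever $k\ge m-1$ (the only case that matters, since the equivalence will be invoked with $m\in\{k,k+1\}$), a point of the form $(0,s_2,\dots,s_d)$ with $(s_2,\dots,s_d)$ in the interior of $\widetilde T^k_{\ell_2,\dots,\ell_d}$ lies in $\widehat\Omega^m_\bullet$ iff it belongs to some positive-measure closed level-$k$ cell contained in $\widehat\Omega^m_\bullet$, and the unique such cell is precisely $\widehat T^k_{p_1+1,\ell_2,\dots,\ell_d}$. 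Granted this cell-wise equivalence, summing over cells gives $\supp(\widehat\beta)\subseteq \widehat\Omega^m_\bullet\Leftrightarrow \supp(\widehat\beta|_E)\subseteq \widehat\Omega^m_\bullet|_E$; specialising to $m=k$ and $m=k+1$ and applying the characterisation \eqref{eq:short cHH} proves $\widehat\beta\in\widehat\HH_\bullet\Leftrightarrow \widehat\beta|_E\in\widehat\HH_\bullet|_E$, which is precisely the claimed set equality.

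Essential injectivity is then a by-product: two distinct $\widehat\beta_1\ne\widehat\beta_2\in\widehat\HH_\bullet$ with $\widehat\beta_j|_E\ne 0$ must have different level/index tuples $(k_j;j_2^j,\dots,j_d^j)$ by the first step, and therefore correspond to two distinct elements of $\widehat\HH_\bullet|_E$; since $\widehat\HH_\bullet|_E$ is linearly independent by \cite[Lemma~2]{juttler} applied on $E$, distinct basis elements represent distinct functions, and hence $\widehat\beta_1|_E\ne\widehat\beta_2|_E$.
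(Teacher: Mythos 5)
Your proposal is correct and follows essentially the same route as the paper's proof: exploit the $p_i$-openness of the knot vectors to identify the B-splines surviving restriction to $E$ with the $(d-1)$-variate tensor-product B-splines, observe that the support of such a spline is a union of level-$k$ cells touching $E$ so that the containment conditions $\supp(\widehat\beta)\subseteq\widehat\Omega^m_\bullet$ and $\supp(\widehat\beta|_E)\subseteq\widehat\Omega^m_\bullet|_E$ are equivalent, and conclude via \eqref{eq:short cHH}. Your treatment of the ``$\Leftarrow$'' direction of the cell-wise equivalence and your use of linear independence of $\widehat\HH_\bullet|_E$ for the injectivity are in fact slightly more explicit than the paper's corresponding steps, but the argument is the same.
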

\begin{proof}
We prove the assertion in two steps.

\textbf{Step 1:}
Let $k\in\N_0$.
We recall that the  knot vectors $\widehat\KK^k_i$ are $p_i$-open.
In particular, this implies that the corresponding one-dimensional B-splines  $\widehat\BB^k_i$ are interpolatoric at the end points $e\in\{0,1\}$.
This means that the first resp. last B-spline in $\widehat\BB^k_i$ (i.e., $B(\cdot|t_{i,0}^k,\dots,t_{i,1+p_i}^k)$ resp. $B(\cdot|t_{i,N_i^k-1}^k,\dots,t_{i,N_i^k+p_i}^k)$) is equal to one at  $0$ resp. $1$ and that all other B-splines of $\widehat\BB^k_i$ vanish at these points; see, e.g., \cite[Lemma 2.1]{schimanko}.

%We consider $d=1$. 
%Let $k\in\N_0$.
%Since the knot vector $\widehat\KK^k_1$ is $p_1$-open,  $\widehat\BB^k$ is interpolatoric at the end points $e\in\{0,1\}$.
%This means that the first resp. last B-spline in $\widehat\BB^k$ (i.e., $\widehat B^k_1$ resp. $\widehat B^k_{N^k_1}$) is equal to one at  $0$ resp. $1$ and that all other B-splines of $\widehat\BB^k$ vanish at these points; see, e.g., \cite[Lemma 2.1]{schimanko}.
%Note that the corresponding supports consist of one single element, i.e., $\supp(\widehat B^k_1)=\widehat T^k_1$ and $\supp(\widehat B^k_{N_1^k})=\widehat T^k_{N_1^k+p_1}$.
%For $e\in\{0,1\}$, this together with \eqref{eq:short cHH} proves  existence and uniqueness of $\widehat\beta\in\widehat\HH_\bullet$ with  $\widehat\beta(e)\neq 0$, where $\level(\widehat\beta)=\level(\widehat T)$ for the element $\widehat T\in\widehat\TT_\bullet$ with $e\in \widehat T$.
%It even satisfies $\widehat\beta(e)=1$.
%
%Therefore, there holds $\widehat\beta(0)\neq 0$ resp. $\widehat\beta(1)\neq 0$ if and only if $\widehat\beta$ is the first resp. last B-spline of $\widehat\BB^k$.
%In this case, $\supp(\widehat\beta)$ coincides with the first resp. last element in $\widehat\TT^k$.
%Due to \eqref{eq:supp elements} and \eqref{eq:level beta is}, this element is also in $\widehat\TT_\bullet$.
%We conclude that $\widehat\beta$ is the only element in $\widehat\HH_\bullet$ such that $\widehat\beta(0)\neq 0$ resp. $\widehat\beta(1)\neq1$.
%Since $\widehat V_\bullet\in\widehat\XX_\bullet$, we see $c_{\widehat\beta}=0$.

\textbf{Step 2:}
We consider arbitrary  $d>1$.
For $k\in\N_0$, let $\widehat\BB^k|_E$ be the set of tensor product B-splines  induced by the reduced knots $\widehat\KK^k|_E$ which are defined analogously to $\widehat\KK^0|_E$.
Since $\widehat\KK^k_j$ is $p_j$-open, it holds that $\widehat\BB^k|_E=\set{\widehat \beta|_{E}}{\widehat\beta \in \widehat\BB^k\wedge \widehat \beta|_{E}\neq0}$; see also Step~1.
Then,  the identity \eqref{eq:short cHH} shows
\begin{align}\label{eq:restricted}
\widehat\HH_\bullet|_E=\bigcup_{k\in\N_0}&\Big\{\widehat\beta|_{E}: 
\widehat\beta\in\widehat\BB^k\wedge \widehat\beta|_{E}\neq 0\wedge\supp(\widehat\beta|_{E})\subseteq\widehat\Omega_\bullet^k|_E
\wedge\supp(\widehat\beta|_{E})\not\subseteq\widehat\Omega_\bullet^{k+1}|_E\Big\}.
\end{align}
Let $\widehat\beta\in\widehat\BB^k$ for some $k\in\N_0$ with $\widehat\beta|_{E}\neq 0$.
We set $J:=0$ for $e=0$ resp. $J:=N_I-1$ for $e=1$.
Since $\widehat B(e|t_{I,j_I}^k,\dots,t_{I,j_I+p_I+1}^k)$ does not vanish only if $j_I=J$ (see Step 1), $\widehat\beta$ must be of the form 
\begin{align}\label{eq:restricted spline}
\hspace{-2mm}\widehat\beta(s_1,\dots,s_d)=\prod_{\substack{i=1\\ i\neq I}}^{d} \widehat B(s_i|t_{i,j_i}^k,\dots,t_{i,j_i+p_i+1}^k) \widehat B(s_I|t_{I,J}^k,\dots,t_{I,J+p_I+1}^k)\quad\text{for all }s\in\R^d,
\end{align}
where the second factor is one if $s_I=e$ and   satisfies $\supp(\widehat B(\cdot|t_{I,J}^k,\dots,t_{I,J+p_I+1}^k))=[t_{I,J}^k,t_{I,J+p_I+1}^k]$.
This shows that  $\supp(\widehat\beta)$ is the union of elements  $\widehat T\in\widehat\TT^k$ with non-empty intersection with $E$.
Hence $\supp(\widehat\beta|_{E})\subseteq\widehat\Omega_\bullet^k|_E$ is equivalent to $\supp(\widehat\beta)\subseteq\widehat\Omega_\bullet^k$, and $\supp(\widehat\beta|_{E})\not\subseteq\widehat\Omega_\bullet^{k+1}|_E$ is equivalent to $\supp(\widehat\beta)\not\subseteq\widehat\Omega_\bullet^{k+1}$.
Therefore, \eqref{eq:restricted} becomes 
\begin{align*}
\widehat\HH_\bullet|_E=\bigcup_{k\in\N_0}&\Big\{\widehat\beta|_{E}: 
\widehat\beta\in\widehat\BB^k\wedge \widehat\beta|_{E}\neq 0\wedge\supp(\widehat\beta)\subseteq\widehat\Omega_\bullet^k
\wedge\supp(\widehat\beta)\not\subseteq\widehat\Omega_\bullet^{k+1}\Big\}.
\end{align*}
Together with \eqref{eq:short cHH}, this shows $\widehat\HH_\bullet|_E=\set{\widehat\beta|_E}{\widehat\beta\in\widehat\HH_\bullet\wedge \widehat\beta|_E\neq0}$.
Finally, let $\widehat\beta_1,\widehat\beta_2\in\widehat\HH_\coarse$ with $\widehat\beta_1|_E\neq 0$.
If $\widehat\beta_1|_E=\widehat\beta_2|_E$, then \eqref{eq:restricted spline} already implies $\widehat\beta_1=\widehat\beta_2$.
This concludes the proof.

\end{proof}

\begin{corollary}\label{cor:basis of X}
Let $\widehat\TT_\bullet$ be an arbitrary hierarchical mesh on the parameter domain $\widehat\Omega$. 
Then, $\set{\widehat\beta\in\widehat\HH_\bullet}{\beta|_{\partial\widehat\Omega}=0}$ is a basis of 
$\widehat\XX_\bullet$.%=\set{\Trunc_\bullet(\beta)}{\beta\in\HH_\bullet\wedge\beta|_{\partial\Omega}=0},
%where the set on the right hand side even forms a basis. 
\end{corollary}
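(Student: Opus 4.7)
The plan is to leverage the fact that $\widehat\HH_\bullet$ is already a basis of $\widehat\YY_\bullet$ (existence of a unique expansion is immediate from $\widehat\YY_\bullet = \linhull(\widehat\HH_\bullet)$ combined with the linear independence established in \cite[Lemma~2]{juttler}), and then to use Proposition~\ref{prop:restriction} facet by facet to show that the coefficients associated with splines that do \emph{not} vanish on $\partial\widehat\Omega$ must be zero whenever the linear combination lies in $\widehat\XX_\bullet$.

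First I would take an arbitrary $\widehat V_\bullet \in \widehat\XX_\bullet \subset \widehat\YY_\bullet$ and write it uniquely as $\widehat V_\bullet = \sum_{\widehat\beta\in\widehat\HH_\bullet} c_{\widehat\beta}\,\widehat\beta$. Let $\widehat\HH_\bullet^0 := \set{\widehat\beta\in\widehat\HH_\bullet}{\widehat\beta|_{\partial\widehat\Omega}=0}$; the goal is to prove $c_{\widehat\beta}=0$ for every $\widehat\beta\in\widehat\HH_\bullet\setminus\widehat\HH_\bullet^0$. Fix one of the $2d$ facets $E=[0,1]^{I-1}\times\{e\}\times[0,1]^{d-I}$ of $\partial\widehat\Omega$. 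Restricting the expansion to $E$ yields
\begin{align*}
 0 \;=\; \widehat V_\bullet|_E \;=\; \sum_{\widehat\beta\in\widehat\HH_\bullet,\ \widehat\beta|_E\neq 0} c_{\widehat\beta}\,\widehat\beta|_E.
\end{align*}
By Proposition~\ref{prop:restriction}, the family $\{\widehat\beta|_E : \widehat\beta\in\widehat\HH_\bullet,\ \widehat\beta|_E\neq 0\}$ coincides (up to the canonical identification of $E$ with $[0,1]^{d-1}$) with the $(d-1)$-dimensional hierarchical basis $\widehat\HH_\bullet|_E$, and the restriction $\widehat\beta\mapsto\widehat\beta|_E$ is injective on the subset of splines not vanishing on $E$. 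Since $\widehat\HH_\bullet|_E$ is itself linearly independent (again by \cite[Lemma~2]{juttler} applied in dimension $d-1$), we conclude $c_{\widehat\beta}=0$ for every $\widehat\beta\in\widehat\HH_\bullet$ with $\widehat\beta|_E\neq 0$.

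Taking the union of this conclusion over all $2d$ facets $E$ of $\partial\widehat\Omega$ gives $c_{\widehat\beta}=0$ whenever $\widehat\beta|_{\partial\widehat\Omega}\neq 0$, hence $\widehat V_\bullet = \sum_{\widehat\beta\in\widehat\HH_\bullet^0} c_{\widehat\beta}\,\widehat\beta \in \linhull(\widehat\HH_\bullet^0)$. Conversely, every $\widehat\beta\in\widehat\HH_\bullet^0$ belongs to $\widehat\YY_\bullet$ and vanishes on $\partial\widehat\Omega$, so $\linhull(\widehat\HH_\bullet^0)\subseteq\widehat\XX_\bullet$, giving $\widehat\XX_\bullet = \linhull(\widehat\HH_\bullet^0)$. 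Linear independence of $\widehat\HH_\bullet^0$ is inherited from that of $\widehat\HH_\bullet$, so $\widehat\HH_\bullet^0$ is a basis of $\widehat\XX_\bullet$.

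The only genuine content is the facet-wise argument, and the heavy lifting there has already been done in Proposition~\ref{prop:restriction}; no further estimates are required. There is no real obstacle, but one subtlety to be careful about is that a single $\widehat\beta$ may be nonzero on several facets simultaneously, so the ``union over facets'' step just observes that $\widehat\beta|_{\partial\widehat\Omega}\neq 0$ means $\widehat\beta|_E\neq 0$ for at least one facet $E$, which suffices to force $c_{\widehat\beta}=0$.
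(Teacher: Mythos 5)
Your proposal is correct and follows essentially the same route as the paper's proof: expand $\widehat V_\bullet\in\widehat\XX_\bullet$ in the hierarchical basis, restrict to each facet $E$, and invoke Proposition~\ref{prop:restriction} together with linear independence of the $(d-1)$-dimensional hierarchical basis (and injectivity of the restriction) to force the coefficients of the non-vanishing splines to be zero. No discrepancies to report.
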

\begin{proof}
%We prove the assertion in the parameter domain.
%Recall that $\widehat\XX_\bullet=\set{\widehat V_\bullet\in\widehat\YY_\bullet}{\widehat V_\bullet|_{\partial\widehat\Omega}=0}$.

Linear independence as well as $\set{\widehat\beta\in\widehat\HH_\bullet}{\widehat\beta|_{\partial\widehat\Omega}=0}\subseteq\XX_\bullet$ are obvious. 
To see $\XX_\bullet\subseteq{\rm span}\set{\widehat\beta\in\widehat\HH_\bullet}{\widehat\beta|_{\partial\widehat\Omega}=0}$,
%The "$\subseteq$" relation is obvious.
%To see "$\supseteq$",
 let $\widehat V_\bullet\in\widehat\XX_\bullet$.
Consider  the unique representation 
$\widehat V_\bullet=\sum_{\widehat\beta\in\widehat\HH_\bullet} c_{\widehat\beta}\widehat\beta$ with  $c_{\widehat\beta}\in\R$.
For arbitrary $\widehat\beta\in\widehat\HH_\bullet$ with $\widehat\beta|_{\partial\widehat\Omega}\neq 0$, 
we have to prove $c_{\widehat\beta}=0$, i.e., we have to show the implication
\begin{align*}
\sum_{\substack{\widehat\beta\in\widehat\HH_\bullet\\\widehat\beta|_{\partial\widehat\Omega}\neq 0}} c_{\widehat\beta}\,\widehat\beta|_{\partial\widehat\Omega}=0\quad\Longrightarrow \quad \Big(\forall \widehat\beta\in\widehat\HH_\bullet\text{ with }\beta|_{\partial\widehat\Omega}\neq 0\quad  c_{\widehat\beta}=0\Big).
\end{align*}
Let $E=[0,1]^{I-1}\times\{e\}\times{[0,1]^{d-I}}$ with $I\in\{1,\dots,d\}$ and  $\sum_{\widehat\beta\in\widehat\HH_\bullet\wedge\widehat\beta|_{E}\neq 0} c_{\widehat\beta}\,\widehat\beta|_E=0$.
According to Proposition \ref{prop:restriction}, the family $\big(\widehat\beta|_E:\widehat\beta\in\widehat\HH_\bullet\wedge\widehat\beta|_{E}\neq 0\big)$ is linearly independent.
Hence,  $\widehat c_{\widehat \beta}=0$ for $\widehat\beta\in\widehat\HH_\coarse$ with $\widehat\beta|_E\neq 0$.
Since $\partial\widehat\Omega$ is the union of such facets $E$, this concludes the proof.
\end{proof}

\subsection{Admissible  meshes in the parameter domain $\bold{\widehat\Omega}$}\label{subsec:admissible meshes}
Let $\widehat\TT_\bullet$ be an arbitrary hierarchical mesh.
We define the set of all neighbors  of an element $\widehat T\in\widehat\TT_\bullet$ as
\begin{align}\label{eq:neigbors}\begin{split}
\NN_\bullet(\widehat T)&:=\set{\widehat T'\in\widehat\TT_\bullet}{\exists\widehat\beta\in\widehat\HH_\bullet\quad\widehat T,\widehat T'\subseteq\supp(\widehat\beta)},
\end{split}
\end{align}
According to  \eqref{eq:supp elements},  the condition   $\widehat T,\widehat T'\subseteq\supp(\widehat\beta)$ is equivalent to $|\widehat T\cap\supp(\widehat\beta)|\neq0\neq |\widehat T'\cap\supp(\widehat \beta)|$.
%We set $\BB^{-1}:=\BB^0$.
%With this, we define the set of all alternative neighbors and all bad alternative neighbors of $T$ as
%\begin{align}\begin{split}
%\tNN_\bullet(T)&:=\set{T'\in\TT_\bullet}{\exists\beta\in\BB^{\level(T)-1}\text{ with }|T\cap\supp(\beta)|\neq0\neq |T'\cap\supp(\beta)|},\\
%\tNNbad_\bullet(T)&:=\set{T'\in\tNN_\bullet(T)}{\level(T')=\level(T)-1}.\end{split}
%\end{align} 
We call  $\widehat\TT_\bullet$  admissible if 
\begin{align}\label{def:admissible}
|\level(\widehat T)-\level(\widehat T')|\le 1\quad\text{for all }\widehat T,\widehat T'\in\widehat \TT_\bullet \text{ with }\widehat T'\in\NN_\bullet(\widehat T).
\end{align}
%We  call it alternatively admissible if and only if\footnote{The missing $|\cdot|$ is indeed no error.} $\level(T)-\level(T')\le 1$ for all $T,T'\in\TT_\bullet$ with $T'\in\tNN_\bullet(T)$.
%Clearly, any tensor mesh is admissible. 
% and alternatively admissible.
% This especially includes the initial tensor mesh $\TT_0$. 
%We define the set of all admissible 
%resp. alternatively admissible 
%meshes 
%which are finer than $\TT_0$, i.e., obtained by bisections, 
%as $\T$. % resp. $\tT$. 
%In general, neither $\T\subseteq\tT$ nor $\tT\subseteq\T$ holds. \footnote{Dazu sollte man Plots machen. Die habe ich mir aber schon \"uberlegt.}
%Moreover, for $\TT_\bullet\in\T$, we  define the set of all finer admissible meshes $\TT_\circ\in\T$, i.e., that are obtained by iterative dyadic bisections, as $\refine(\TT_\bullet)$.
%Note that we have $\refine(\TT_0)=\T$.
Let $\widehat\T$ be the set of all admissible hierarchical meshes in the parameter domain.
Clearly, $\widehat\TT^k\in\widehat\T$ for all $k\in\N_0$.
 Moreover, admissible meshes satisfy the following interesting properties which are also  important for an efficient implementation of IGAFEM with hierarchical splines.
 
\begin{proposition}\label{prop:bounded number}
Let $\widehat \TT_\bullet\in\widehat\T$. 
Then, the support of any basis function $\widehat \beta\in\widehat \HH_\bullet$ is the union  of at most  $2^d (p+1)^d$ elements $\widehat T'\in\widehat \TT_\bullet$.
Moreover, for any $\widehat T\in\widehat \TT_\bullet$, there are at most $2(p+1)^d$ basis functions $\widehat \beta'\in\widehat \HH_\bullet$ that have support on $\widehat T$, i.e., $|\supp(\widehat \beta')\cap \widehat T|>0$.
\end{proposition}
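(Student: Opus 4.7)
The plan is to prove both claims by combining the admissibility condition~\eqref{def:admissible} with the structural identities~\eqref{eq:supp elements} and~\eqref{eq:level beta is}, together with two elementary facts from the tensor-product B-spline construction: each level-$k$ cell $\widehat T\in\widehat\TT^k$ is contained in the support of at most $\prod_{i=1}^d(p_i+1)\le(p+1)^d$ functions of $\widehat\BB^k$, and each $\widehat\beta\in\widehat\BB^k$ has at most $(p+1)^d$ level-$k$ cells contained in its support. Both facts follow directly from~\eqref{eq:B-spline} and the definition of $\widehat\TT^k$.

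For the first claim, let $\widehat\beta\in\widehat\HH_\bullet$ and set $k:=\level(\widehat\beta)$. First I would use~\eqref{eq:level beta is} to produce an element $\widehat T_0\in\widehat\TT_\bullet$ with $\widehat T_0\subseteq\supp(\widehat\beta)$ and $\level(\widehat T_0)=k$. For any $\widehat T'\in\widehat\TT_\bullet$ with $\widehat T'\subseteq\supp(\widehat\beta)$, the pair $\widehat T_0,\widehat T'$ are neighbors via $\widehat\beta$ in the sense of~\eqref{eq:neigbors}, so admissibility yields $|\level(\widehat T')-k|\le 1$. Combined with $\level(\widehat T')\ge k$ (from~\eqref{eq:supp elements}), this forces $\level(\widehat T')\in\{k,k+1\}$. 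Since $\supp(\widehat\beta)$ is a Cartesian product of at most $p_i+1$ consecutive level-$k$ knot intervals per direction, it is the union of at most $(p+1)^d$ level-$k$ cells; each such cell either already belongs to $\widehat\TT_\bullet$ or is subdivided into $2^d$ level-$(k+1)$ cells of $\widehat\TT_\bullet$. This bounds the total number of cells of $\widehat\TT_\bullet$ contained in $\supp(\widehat\beta)$ by $2^d(p+1)^d$.

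For the second claim, let $\widehat T\in\widehat\TT_\bullet$ with $k:=\level(\widehat T)$, and suppose $\widehat\beta'\in\widehat\HH_\bullet$ satisfies $|\supp(\widehat\beta')\cap\widehat T|>0$ with $k':=\level(\widehat\beta')$. I first claim that $k'\le k$: otherwise~\eqref{eq:supp elements} would supply $\widehat T''\in\widehat\TT_\bullet$ of level $\ge k'>k$ contained in $\supp(\widehat\beta')$ with $|\widehat T''\cap\widehat T|>0$; the nesting of the knot vectors then forces $\widehat T''\subsetneq\widehat T$, contradicting the pairwise disjointness of interiors of cells in $\widehat\TT_\bullet$. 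Hence $k'\le k$, and $\widehat T$ is contained in the unique level-$k'$ cell of $\supp(\widehat\beta')$ that it meets, so in fact $\widehat T\subseteq\supp(\widehat\beta')$. Applying~\eqref{eq:level beta is} to $\widehat\beta'$ produces a cell $\widehat T_0\in\widehat\TT_\bullet$ with $\widehat T_0\subseteq\supp(\widehat\beta')$ and $\level(\widehat T_0)=k'$; since $\widehat T,\widehat T_0$ are neighbors via $\widehat\beta'$, admissibility gives $|k-k'|\le 1$, hence $k'\in\{k-1,k\}$. For each of these two levels, the tensor-product count bounds the number of candidates $\widehat\beta'\in\widehat\BB^{k'}$ with $\widehat T\subseteq\supp(\widehat\beta')$ by $(p+1)^d$, yielding the bound $2(p+1)^d$ in total.

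The technical heart of the argument is the first step of the second claim, namely upgrading the positive-intersection hypothesis $|\supp(\widehat\beta')\cap\widehat T|>0$ to the containment $\widehat T\subseteq\supp(\widehat\beta')$ together with the level inequality $\level(\widehat\beta')\le\level(\widehat T)$. This requires combining the refinement decomposition~\eqref{eq:supp elements} with the disjointness of interiors of cells in $\widehat\TT_\bullet$ to rule out the case that $\widehat\beta'$ has level strictly larger than $\widehat T$. Once this structural observation is in place, admissibility restricts the relevant levels to at most two in each claim, and everything else reduces to the classical tensor-product B-spline support counts recalled at the outset.
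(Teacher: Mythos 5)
Your proposal is correct and follows essentially the same route as the paper: use \eqref{eq:level beta is} together with admissibility to confine the levels of the relevant elements (resp.\ basis functions) to two consecutive values, then invoke the standard tensor-product counts of at most $(p+1)^d$ cells per support and $(p+1)^d$ B-splines per cell on each level. Your more detailed justification of the step from $|\supp(\widehat\beta')\cap\widehat T|>0$ to $\widehat T\subseteq\supp(\widehat\beta')$ is sound; the paper obtains the same implication directly from \eqref{eq:supp elements} and the measure-zero overlap of distinct elements of $\widehat\TT_\bullet$.
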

\begin{proof}

We abbreviate $k:=\level(\widehat\beta)$.
%We already know that $\widehat\beta$ is the union of elements $\widehat T'\in\widehat\TT_\bullet$ with $\level(\widehat T')\ge k$, see \eqref{eq:supp elements}.
By \eqref{eq:level beta is}, there exists $\widehat T''\subseteq \supp(\widehat\beta)$ with $\level(\widehat T'')=k$.
Admissibility of $\widehat\TT_\bullet$ together with \eqref{eq:supp elements}  shows that $\level(\widehat T')\in\{k,k+1\}$ for all $\widehat T'\in\widehat\TT_\bullet$ with $\widehat T'\subseteq \supp(\widehat\beta)$.
Since $\widehat\beta$ is an element of $\widehat\BB^k$, its support is the union of at most $2^d (p+1)^d$ elements in $\widehat\TT^{k+1}$
This proves the first assertion.
%\eqref{eq:supp elements} and \eqref{eq:level beta is} show that
%\begin{align*}
%\supp(\widehat\beta)=\bigcup_{k\ge \level(\widehat\beta)}\set{\widehat T\in\widehat\TT_\bullet\cap\widehat\TT^k}{\widehat T\subseteq\supp(\widehat\beta)}.
%\end{align*}}
%The first assertion follows immediately from the definition of admissibility, \eqref{eq:supp elements} and \eqref{eq:level beta is}.
%The same references prove that all $\widehat \beta'\in\widehat \HH_\bullet$ that have support on $\widehat T$, already contain $\widehat T$.
%With the abbreviation $\widetilde k:=\level(\widehat T)$, they also show that $\level(\widehat \beta')=\widetilde k$ or $\level(\widehat \beta')=\widetilde k-1$.
For $\widehat\beta'\in\widehat\HH_\coarse$ and $\widehat T\in\widehat\TT_\coarse$ with $|\supp(\widehat\beta')\cap \widehat T|>0$, the characterization  \eqref{eq:supp elements} proves $\widehat T\subseteq\supp(\widehat\beta')$.
Hence, \eqref{eq:level beta is} together with admissibility of $\widehat\T$ proves that $\level(\widehat\beta')=\widetilde k:=\level(\widehat T)$ or $\level(\widehat\beta')=\widetilde k-1$.
With $\widehat \BB^{-1}:=\widehat \BB^0$, 
there are at most $(p+1)^d$ basis functions in $\widehat \BB^{\,\widetilde k-1}$ and $(p+1)^d$ basis functions in $\widehat \BB^{\,\widetilde k}$ that have support on the element $\widehat T$.
This concludes the proof.
\end{proof}

\begin{remark}\label{rem:connected}
Since the support of any $\widehat \beta\in\widehat \HH_\bullet$ is connected, Proposition~\ref{prop:bounded number} particularly shows that $\widehat T' \subseteq\supp(\widehat\beta)$ for an element $\widehat T'\in\widehat\TT_\coarse$ implies that $\supp(\widehat\beta)\subseteq\pi_\coarse^{2(p+1)}(\widehat T')$.
Moreover, we recall that $\widehat T' \subseteq\supp(\widehat\beta)$ is equivalent to $|\widehat T' \cap\supp(\widehat\beta)|>0$; see \eqref{eq:supp elements}.\qed%
\end{remark}

\subsection{Refinement   in the parameter domain $\bold{\widehat\Omega}$}\label{subsec:concrete refinement}
We define the initial mesh $\widehat\TT_0:=\widehat\TT^0$.
Note that $\widehat\TT_0$ is a hierarchical mesh with $\widehat\Omega_0^k=\emptyset$ for all $k>0$.
%It just coincides with the tensor mesh $\widehat\TT^0$.
We say that a hierarchical mesh $\widehat\TT_\circ$ is finer than another hierarchical mesh $\widehat\TT_\bullet$ 
%and write $\widehat\TT_\bullet\le\widehat\TT_\circ$
 if  $\widehat\Omega_\bullet^k\subseteq \widehat\Omega_\circ^k$ for all $k\in\N_0$.
This just means that $\widehat\TT_\circ$ is obtained from $\widehat\TT_\bullet$ by iterative dyadic bisections of the elements in $\widehat\TT_\bullet$.
To bisect an element $\widehat T\in\widehat\TT_\bullet$, one just has to add it to the set $\widehat\Omega_\bullet^{\level(\widehat T)+1}$, see \eqref{eq:bisection} below.
In this case, the corresponding spaces are nested, i.e., \begin{align}\label{eq:hierarchical nested}
\widehat\YY_\bullet\subseteq\widehat\YY_\circ\quad\text{and}\quad\widehat\XX_\bullet\subseteq\widehat\XX_\circ.
\end{align}
% see for example \cite[Corollary 2]{speleers}.
For a proof, see, e.g., \cite[Corollary 2]{speleers}.
In particular, this implies
% This especially implies 
 \begin{align}\label{eq:0 contained}
 \widehat\YY^0\subseteq\widehat\YY_\bullet\subseteq\widehat\YY^{M_\bullet-1}.
 \end{align}
 
Next, we present a concrete refinement algorithm to specify the setting of Section~\ref{subsec:general refinement}.
To this end, we first define for $\widehat T\in\widehat\TT_\bullet\in\T$ the set of its bad neighbors 
\begin{align}\label{eq:bad neighbors}
\NN^{\rm bad}_\bullet(\widehat T)&:=\set{\widehat T'\in\NN_\bullet(\widehat T)}{\level(\widehat T')=\level(\widehat T)-1}.
\end{align}
% which preserves admissibility, % see Lemma~\ref{lem:refineT subset T} below.
 \begin{algorithm}\label{alg:refinement}
\textbf{Input:} Hierarchical mesh $\widehat\TT_\bullet$ , marked elements $\widehat\MM_\bullet=:\widehat\MM_\bullet^{(0)}\subseteq\widehat\TT_\bullet$.
\begin{itemize}
\item[\rm(i)]  Iterate the following steps {\rm (a)--(b)} for $i=0,1,2,\dots$ until $\widehat\UU_\bullet^{(i)}=\emptyset$:
\begin{itemize}
\item[\rm (a)]Define $\widehat\UU_\bullet^{(i)}:=\bigcup_{\widehat T\in\widehat\MM_\bullet^{(i)}}\set{\widehat T'\in\widehat\TT_\bullet\setminus\widehat\MM_\bullet^{(i)}}{\widehat T'\in\NN^{\rm bad}_\bullet(T)}$.
\item[\rm(b)] 
Define  $\widehat\MM_\bullet^{(i+1)}:=\widehat\MM_\bullet^{(i)}\cup\widehat\UU_\bullet^{(i)}$.
\end{itemize}
\item[\rm(ii)] 
Dyadically bisect all $\widehat T\in\widehat \MM_\bullet^{(i)}$ by adding $\widehat T$ to the set $\widehat \Omega_\bullet^{\level(\widehat T)+1}$ and obtain a finer hierarchical mesh $\widehat\TT_\circ=\refine(\widehat\TT_\bullet,\widehat\MM_\bullet)$, where 
\begin{align}\label{eq:bisection}
\widehat\Omega_\circ^k=\widehat\Omega_\bullet^k\cup\bigcup\set{\widehat T\in\widehat\MM_\bullet^{(i)}}{\level(\widehat T)=k-1}\quad\text{for all }k\in\N.
\end{align}
\end{itemize}
\textbf{Output:} Refined mesh $\widehat\TT_\circ:=\refine(\widehat\TT_\bullet,\widehat\MM_\bullet)$.
\end{algorithm}
%For the set of all elements that are bisected during $\refine(\cdot,\cdot)$, we write \begin{align}\widehat\MM_\bullet^{(\rm end)}:=\widehat\TT_\bullet\setminus\refine(\widehat\TT_\bullet,\widehat\MM_\bullet).\end{align}
Clearly, $\refine(\widehat\TT_\bullet,\widehat\MM_\bullet)$ is finer than $\widehat\TT_\bullet$. 
%We will see later that $\refine(\cdot,\cdot)$ preserves admissibility, i.e., $\refine(\widehat\TT_\bullet,\widehat\MM_\bullet)$ is admissible  provided that $\widehat\TT_\bullet$ is  admissible, see Proposition~\ref{prop:refineT subset T} below. 
For any hierarchical mesh $\widehat\TT_\bullet$, we define $\refine(\widehat\TT_\bullet)$ as the set of all hierarchical meshes $\widehat\TT_\circ$ such that there exist hierarchical meshes $\widehat\TT_{(0)},\dots,\widehat\TT_{(J)}$ and marked elements $\widehat\MM_{(0)},\dots,\widehat\MM_{(J-1)}$ with $\widehat\TT_\circ=\widehat\TT_{(J)}=\refine(\widehat\TT_{(J-1)},\widehat\MM_{(J-1)}),\dots,\widehat\TT_{(1)}=\refine(\widehat\TT_{(0)},\widehat\MM_{(0)})$, and $\widehat\TT_{(0)}=\widehat\TT_\bullet$. 
%Moreover, we set $\widehat\T:=\refine(\widehat\TT_0)$.
 Here, we formally allow $J=0$, i.e., $\TT_\coarse\in\refine(\TT_\coarse)$.
 Proposition ~\ref{prop:refineT subset T}  below will show that $\widehat\T=\refine(\widehat\TT_0)$, i.e., starting from $\widehat\TT_0=\widehat\TT^0$,  all admissible meshes $\widehat\TT_\coarse$ can be generated by iterative refinement via Algorithm~\ref{alg:refinement}.  
%For an admissible mesh $\widehat\TT_\bullet$, we will see later that $\refine(\widehat\TT_\bullet)$ coincides with the set of all admissible meshes $\widehat\TT_\circ$ that are finer than $\widehat\TT_\bullet$, see Proposition ~\ref{prop:refineT subset T} .
%We already mentioned that any tensor mesh, including $\widehat\TT_0$, is admissible.
%Since all considered hierarchical meshes $\TT_\bullet$ are finer than $\widehat\TT_0=\widehat\TT^0$, this shows that $\widehat\T$ is nothing but the set of all admissible hierarchical meshes.

\begin{remark}\label{rem:morgenstern}
\cite{bg,morgenstern} studied a related refinement strategy, where $\NN_\bullet(\widehat T)$ of \eqref{eq:neigbors} and $\NN^{\rm bad}_\bullet(\widehat T)$ from \eqref{eq:bad neighbors} are replaced by 
\begin{align}
\begin{split}
{\widetilde\NN}_\bullet(\widehat T)&:=\set{\widehat T'\in\widehat\TT_\bullet}{\exists\widehat\beta\in\widehat\BB^{\level(\widehat T)}\text{ with }|\widehat T\cap\supp(\widehat \beta)|\neq 0 \neq |\widehat T'\cap\supp(\widehat \beta)|},\\
{\widetilde{\NN}}_\bullet^{\rm bad}(\widehat T)&:=\set{\widehat T'\in{\widetilde \NN}_\bullet}{\level(\widehat T')=\level(\widehat T)-1}.
\end{split}
\end{align}
There, the refinement strategy was designed for truncated hierarchical B-splines; see Section~\ref{subsec:trunc}.
Compared to the hierarchical B-splines $\widehat\HH_\bullet$, those have generically a smaller, but also more complicated  and not necessarily connected support.
\cite[Corollary 17]{bg} shows that the generated meshes are strictly admissible in the sense of \cite{bg,morgenstern}, i.e., for all $k\in\N$,  it holds that
\begin{align}\label{eq:strictly admissible}
\widehat\Omega_\bullet^k\subseteq \bigcup \set{\widehat T\in\widehat \TT^{k-1}}{\forall \widehat\beta\in\widehat\BB^{k-1}\quad\big(\widehat T\subseteq\supp(\widehat\beta)\,\Longrightarrow\, \supp(\widehat\beta)\subseteq\widehat\Omega_\bullet^{k-1}\big)}.
\end{align}
 This definition actually goes back to \cite[Appendix A]{gjs}.
% According to \cite[Proposition~9]{bg}, \eqref{eq:strictly admissible}  implies that  all truncated hierarchical B-splines, which have support on any element $T\in\TT_\bullet$, belong to at most two successive levels.
%In \cite{bg,morgenstern}, the latter property is refered to as admissible, which is in general not an equivalent definition of our admissibility.
According to \cite[Section~2.4]{bg}, strictly admissible meshes   satisfy 
%{\rm (M1)--(M3)} as well as 
a similar version of Proposition~\ref{prop:bounded number} for truncated hierarchical B-splines.
However, the example from Figure~\ref{fig:BGbad} shows that the proposition fails for hierarchical B-splines and the refinement strategy from \cite{bg}.
In particular, strictly admissible meshes are not necessarily admissible in the sense of Section~\ref{subsec:admissible meshes}.\qed%
%An important consequence of the proposition is that $|\supp(\beta)\cap T|>0$ for  $\beta\in\HH_\bullet$ and $T\in\TT_\bullet$ implies $\supp(\beta)\subseteq \pi_\bullet^{2(p+1)}(T)$.
%This fact will be often used in the proof of (E4.1)--(E4.4).
%However, at least for us, it is not clear if this fact is still true if one uses their strategy and considers truncated hierarchical splines $\beta$ instead.
%Finally, we note that our admissibility, see Section \ref{subsec:admissible}, even implies strict admissibility and hence admissibility in the sense of \cite{bg,morgenstern}.
%This can be easily proved with similar techniques as in the proof of Lemma~\ref{lem:patch2neigbor}.
%However, as mentioned before, the converse is in general false, see again Figure~\ref{fig:BGbad}.
%This can be seen with similar techniques as in the proof of Lemma~\ref{lem:alt admissible}.
\end{remark}

\begin{figure}[t] 
\psfrag{z}[r][r]{z}
\psfrag{T}[r][r]{T}
\psfrag{Algorithm (?2)}[b][c]{\tiny Algorithm \cite{bg}}
\psfrag{Algorithm (?1)}[b][c]{\tiny Algorithm \ref{alg:refinement}}
\begin{center}
\includegraphics[width=0.2\textwidth]{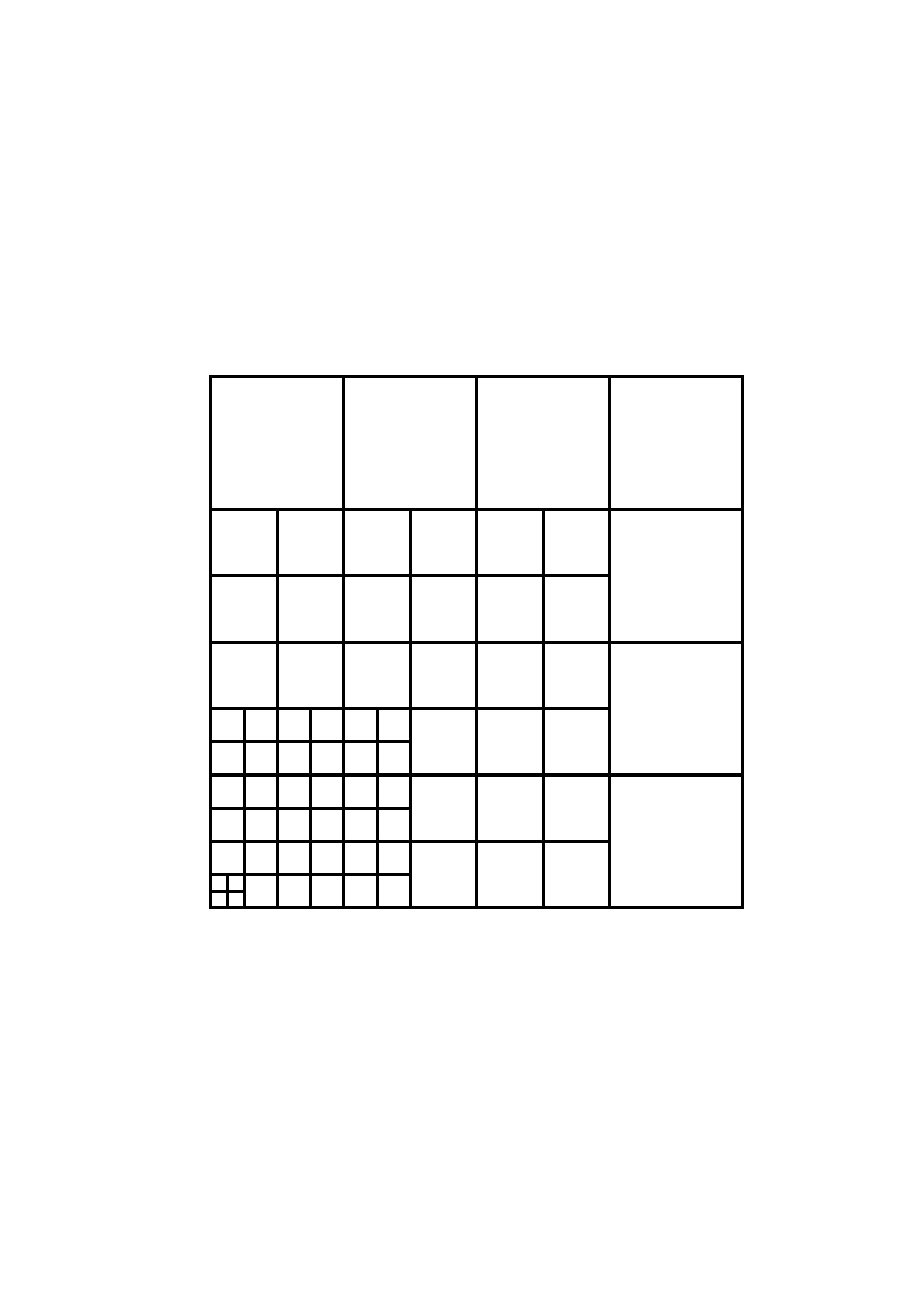}\qquad\qquad
\includegraphics[width=0.2\textwidth]{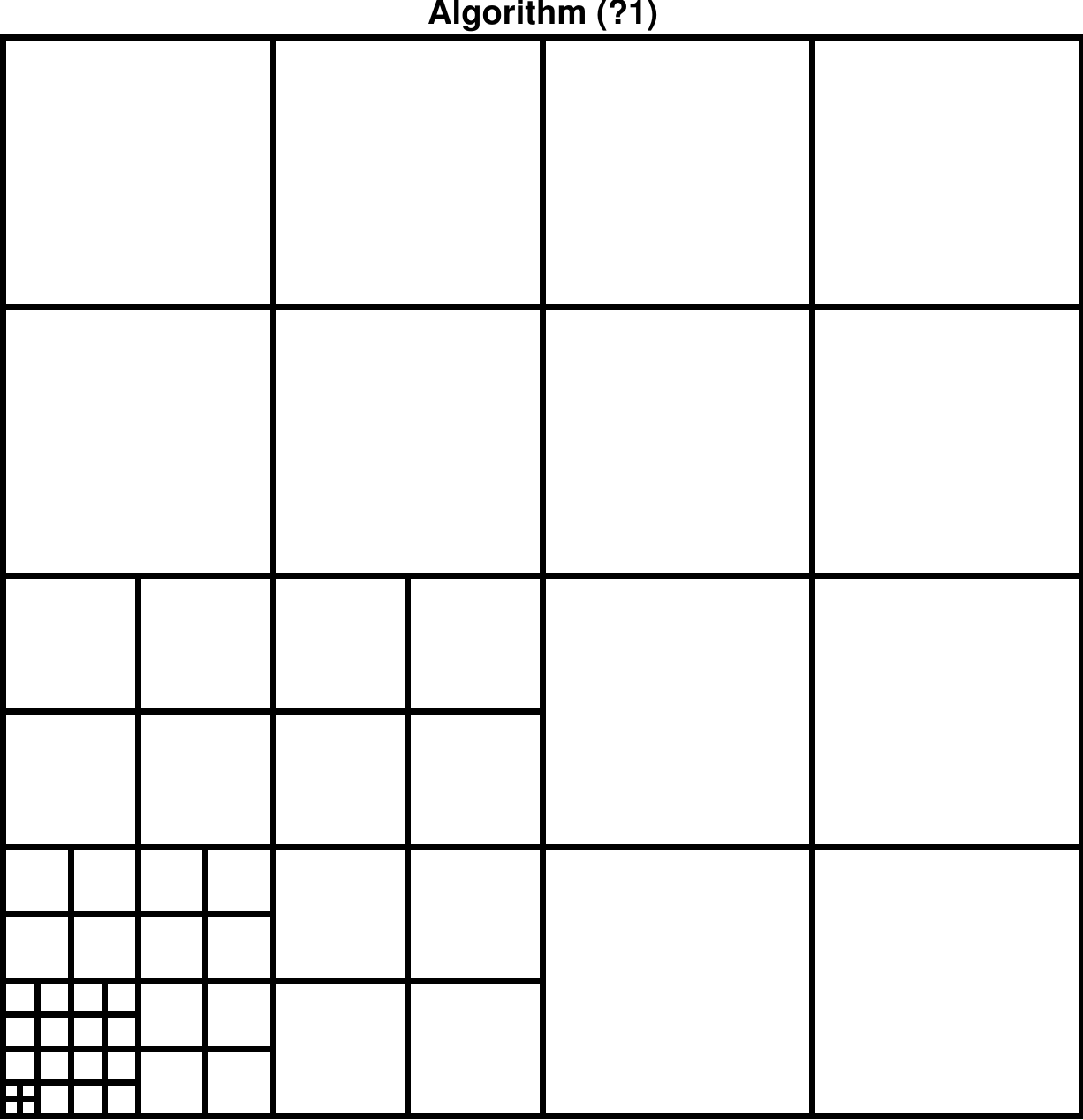}
\end{center}
\caption{
An initial  mesh $\widehat\TT_0$ with only one element $[0,1]^2$ is locally refined in the lower left corner using the  refinement of \cite{bg} (left) resp. the refinement of Algorithm~\ref{alg:refinement} (right); see Remark~\ref{rem:morgenstern} .
%Here, we chose 
Consider the lowest-order case $(p_1,p_2)=(1,1)$. 
By repetitive refinement via \cite{bg}, the number of elements in the support of the hierarchical B-spline $\widehat B(s_1|0,1/2,1)\widehat B(s_2|0,1/2,1)$ grows to infinity.
Moreover, the number of hierarchical B-splines with support on the element in the lower left corner grows to infinity.
This is not the case if one uses Algorithm~\ref{alg:refinement}, see also Proposition~\ref{prop:bounded number}.
}
%The illustrated meshes are the B?ezier meshes (see Definition 2.15).} 
\label{fig:BGbad}
\end{figure}
%We fix some integer $m\ge 1$.
%Let $\TT_\bullet$ be a hierarchical mesh and $T\in\TT_\bullet$.

\subsection{Hierarchical  meshes and splines in the physical domain $\bold{\Omega}$}\label{subsec:physical hsplines}
To transform the definitions in the parameter domain to the physical domain, we assume that we are given 
%some fixed  $W^{1,\infty}({\widehat\Omega})\cap C^2(\widehat\TT_0)$-diffeomorphism 
\begin{align}
\gamma:\overline{\widehat{\Omega}}\to\overline\Omega \quad\text{with}\quad  \gamma\in W^{1,\infty}({\widehat\Omega})\cap C^2(\widehat\TT_0)\quad\text{and}\quad\gamma^{-1}\in W^{1,\infty}(\Omega)\cap C^2(\TT_0),
\end{align}
where $C^2(\TT_0):=\set{v:\overline\Omega\to\R}{v|_T\in C^2(T)\text{ for all }T\in\TT_0}$.
%This already implies  the existence of a constant 
Consequently, there exists
$C_{\gamma}>0$ such that for all 
%$t\in\overline{\widehat\Omega},x\in\overline\Omega$, 
$i,j,k\in\{1,\dots,d\}$
\begin{align}\label{eq:Cgamma}
\begin{split}
\Big\|\frac{\partial}{\partial t_j}\gamma_i\Big\|_{L^\infty(\widehat\Omega)}\le C_\gamma,\quad \Big\|\frac{\partial}{\partial x_j}(\gamma^{-1})_i\Big\|_{L^\infty(\Omega)}\le C_\gamma,\\
\Big\|\frac{\partial^2}{\partial t_j\partial t_k }\gamma_i\Big\|_{L^\infty(\widehat\Omega)}\le C_\gamma,\quad \Big\|\frac{\partial^2}{\partial x_j\partial x_k }(\gamma^{-1})_i\Big\|_{L^\infty(\Omega)}\le C_\gamma,
\end{split}
\end{align}
where $\gamma_i$ resp. $(\gamma^{-1})_i$ denotes the $i$-th component of $\gamma$ resp. $\gamma^{-1}$.
All previous definitions can now also be made in the physical domain, just by pulling them from the parameter domain via the diffeomorphism $\gamma$.
For these definitions, we drop the symbol~$\widehat\cdot$.
 If $\widehat\TT_\bullet\in\widehat\T$, we  define the corresponding mesh in the physical domain as  $\TT_\bullet:=\set{\gamma(\widehat T)}{\widehat T\in\widehat\TT_\bullet}$.
In particular, we have $\TT_0=\set{\gamma(\widehat T)}{\widehat T\in\widehat\TT_0}$.
Moreover, let  $\T:=\set{\TT_\bullet}{\widehat\TT_\bullet\in\widehat\T}$ be the  set of  admissible meshes in the physical domain.
If now $\MM_\bullet\subseteq\TT_\bullet$ with $\TT_\bullet\in\T$, we abbreviate $\widehat\MM_\bullet:=\set{\gamma^{-1}(T)}{T\in\MM_\bullet}$ and define $\refine(\TT_\bullet,\MM_\bullet):=\set{\gamma(\widehat T)}{\widehat T \in\refine(\widehat \TT_\bullet,\widehat\MM_\bullet)}$.
For $\TT_\bullet\in\T$, let  $\XX_\bullet:=\set{\widehat V_\bullet\circ\gamma^{-1}}{\widehat V_\bullet\in\widehat\XX_\bullet}$ be the the corresponding hierarchical spline space.
By  regularity of $\gamma$, we especially obtain
%\begin{align}
%\XX_\bullet\subset \set{v\in W^{1,\infty}_0(\Omega)}{v|_T\in  C^2(T)\text{ for all }T\in\TT_\bullet},
%\end{align}
%and hence 
\begin{align}
\XX_\bullet\subset \set{v\in H_0^1(\Omega)}{v|_T\in  H^2(T)\text{ for all }T\in\TT_\bullet}.
\end{align}

%To prove  (M1)--(M4), we apply the following lemma.
%Moreover, admissibility of $\TT_\bullet\in\T$ implies the following interesting proposition.

%Clearly, one can define admissibility also in the parameter domain.
%Then, one gets similar results.
%We will sometimes use the notation $\widehat \T$ for the set of all admissible meshes in the parameter domain.

%\begin{remark}\label{rem:morgenstern-admissible}
%We note that \cite{morgenstern} considered an analogous version of admissibility. 
%There, the authors defined a  hierarchical mesh $\TT_\bullet$ as admissible, if and only if $|\level(T)-\level(T')|\le 1$ for all $T,T'\in\TT_\bullet$ for which there exists $\beta\in\BB^{\level(T)}$ with $T\cap\supp(\beta)\neq\emptyset\neq T'\cap\supp(\beta)$.
%The small, but nevertheless important, difference to our alternative admissibility, is that, there,  $\beta\in\BB^{\level(T)}$ and $|\level(T)-\level(T')|\le 1$, whereas we only require $\beta \in\BB^{\level(T)-1}$ and $\level(T)-\level(T')\le 1$.
%Indeed, the approach of \cite{morgenstern} has the drawback, that the number of hierarchical B-splines $\HH_\bullet$ that have support on a fixed element $T\in\TT_\bullet$ is not bounded, see Figure ?.
%Moreover, not even\footnote{Da bin ich nur ziemlich sicher...?} the truncated hierarchical B-spline basis functions are local, in the sense that they are in the patch $\pi_\bullet^k(T)$ of some element $T\in\TT_\bullet$ for some bounded $k$.
%Connection to our admissibility?
%\end{remark}

\subsection{Main result}
%In order to formulate the adaptive algorithm in our concrete hierarchical setting,  we still have to define a refinement strategy $\refine(\cdot,\cdot)$:
%\begin{remark}
%Of course, in Algorithm \ref{alg:refinement}, one could replace $\tNNbad_\bullet(T)$ by $\NNbad_\bullet(T)$.
%%As we are mainly interested in admissibility rather than in alternative admissibility of our meshes, 
%By definition of admissibility, this approach would even be more canonical.
%It would also preserve  admissibility, see Lemma~\ref{lem:refineT subset T} below.
%However, it has the drawback that there does not hold ? an analogous version of Lemma~\ref{lem:refine commutes} below, which states that $\refine(\cdot,\cdot)$ is equivalent to iterative refinement with only one marked element.
%This is due to the fact that the basis $\HH_\bullet$ of the considered hierarchical splines which is used in the definition of $\NN_\bullet(T)$ changes  with each single refinement.
%But this lemma is a major ingredient for the closure estimate~{\rm (M3)}. 
%%Therefore, we have to take a detour via alternative admissibility.
%\end{remark}
%In this case, one can easily verify (M1)--(M3):
Before we come to the main result of this work, we fix  polynomial orders $(q_1,\dots, q_d)$ and define for $\TT_\coarse \in\T$ the space of transformed polynomials 
\begin{align}\label{eq:polynomials}
\mathcal{P}(\Omega):=\set{\widehat V\circ\gamma}{\widehat V \text{ is a tensor polynomial of order }(q_1,\dots, q_d)}\end{align}

\begin{remark}
In order to obtain higher-order oscillations, the natural choice of the polynomial orders is $q_i\ge2p_i-1$; see, e.g., \cite[Section~3.1]{nv}.
If $\XX_\bullet\subset C^1(\overline\Omega)$, it is sufficient to choose $q_i\ge2p_i-2$; see Remark~\ref{rem:C12}.\qed
\end{remark}

Altogether, we have  specified the abstract framework of Section~\ref{sec:abstract setting} to hierarchical meshes and splines.
The following theorem is the main result of the present work. It shows that all assumptions of Theorem~\ref{thm:abstract} are satisfied for the present IGAFEM approach.
The proof is  given in Section \ref{sec:proof}.
% where the assumptions \eqref{M:shape}--\eqref{M:trace} on the meshes, \eqref{R:sons}--\eqref{R:overlay} on the refinement, and   \eqref{S:inverse}--\eqref{S:grad} on the finite elment spaces are verified.

\begin{theorem}\label{thm:main}
Hierarchical splines on admissible meshes satisfy the abstract assumptions  \eqref{M:shape}--\eqref{M:trace}, \eqref{R:sons}--\eqref{R:overlay}, and \eqref{S:inverse}--\eqref{S:grad} from Section~\ref{sec:abstract setting}, where the constants depend only on $d$, $\C{\gamma}$, $\widehat \TT_0$, and $(p_1,\dots,p_d)$.
Moreover, the piecewise polynomials $\mathcal{P}(\Omega)$ from \eqref{eq:polynomials} on admissible meshes satisfy the abstract assumptions \eqref{O:inverse}--\eqref{O:grad}, where the constants depend only on $d$, $\C{\gamma}$, $\widehat \TT_0$, and $(q_1,\dots,q_d)$.
By Theorem~\ref{thm:abstract}, this implies reliability \eqref{eq:reliable} as well as efficiency \eqref{eq:efficient} of the error estimator, linear convergence \eqref{eq:linear}, and quasi-optimal convergence rates \eqref{eq:optimal} for the adaptive strategy from Algorithm~\ref{the algorithm}.
%By Theorem~\ref{thm:abstract}, this implies efficiency \eqref{eq:efficient} of the error estimator.
%The constants depend only on $\widehat\TT_0$, $\C{\gamma}, (p_1,\dots,p_d)$ and the constants of {\rm (E3)}, see also Remark~\ref{rem:E3}.
%\dirk{Add efficiency if possible!}
\end{theorem}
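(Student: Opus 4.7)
The overall strategy is to transport everything to the parameter domain $\widehat\Omega$ via the diffeomorphism $\gamma$, and then exploit the dyadic tensor-product structure of the background grids $\widehat\TT^k$. Since $\gamma,\gamma^{-1}$ are $C^2$ on each macro-element of $\widehat\TT_0$ with bounds controlled by $C_\gamma$, elementwise $H^i$-norms on $T\in\TT_\bullet$ are equivalent (with constants depending only on $d,C_\gamma,\widehat\TT_0$) to those on $\widehat T=\gamma^{-1}(T)$, and $|T|\simeq|\widehat T|$. With this reduction, shape regularity \eqref{M:shape}, the bounded-patch property \eqref{M:patch}, and the trace inequality \eqref{M:trace} reduce to the corresponding statements on uniform dyadic tensor meshes, where they are classical. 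The inverse estimate \eqref{S:inverse} follows in the same way from the standard polynomial inverse estimate on reference elements combined with the chain rule and \eqref{eq:Cgamma}, and nestedness \eqref{S:nestedness} is precisely \eqref{eq:hierarchical nested}.

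For the refinement axioms, \eqref{R:sons}–\eqref{R:reduction} are immediate from dyadic bisection, with $\Cson=2^d$ and $\qson=2^{-d}$. The hardest step is the closure estimate \eqref{R:closure}: I would first prove (in the forthcoming Proposition~\ref{prop:refineT subset T}) that a hierarchical mesh is admissible if and only if it lies in $\refine(\widehat\TT_0)$, so admissibility can be encoded as a graded condition on the level function. Then I would adapt the mesh-closure argument of Binev--Dahmen--DeVore, in the form used by Morgenstern~\cite{morgenstern} for the strictly admissible setting, exploiting Proposition~\ref{prop:bounded number} which ensures that each neighbor set $\NN_\bullet(\widehat T)$ lives in a patch of uniformly bounded combinatorial diameter, so that the iteration in Step~(i) of Algorithm~\ref{alg:refinement} can only propagate a bounded amount of extra refinement per marked element. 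The overlay estimate \eqref{R:overlay} will be obtained by the canonical overlay $\widehat\Omega_\fine^k:=\widehat\Omega_\coarse^k\cup\widehat\Omega_\star^k$; one checks that this produces an admissible mesh in $\refine(\widehat\TT_\coarse)\cap\refine(\widehat\TT_\star)$, and an inclusion-exclusion count on level sets gives the cardinality bound. This admissibility-restoring closure step is where I expect the main technical effort.

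For the space axioms \eqref{S:local}–\eqref{S:grad}, the crucial point is the construction of a projector $J_\bullet$. I would use the truncation simplification for admissible meshes (to be given in Proposition~\ref{prop:trunc}) together with the dual-basis techniques of Speleers--Manni~\cite{speleers}: for every $\widehat\beta\in\widehat\HH_\bullet$ assign a \emph{designated element} $\widehat T_\beta\subseteq\supp(\widehat\beta)$ with $\level(\widehat T_\beta)=\level(\widehat\beta)$, and define local $L^2$-biorthogonal functionals against the truncated basis on $\widehat T_\beta$, yielding a Scott--Zhang type projector $J_\bullet:H^1_0(\Omega)\to\XX_\bullet$. Admissibility together with Proposition~\ref{prop:bounded number} and Remark~\ref{rem:connected} gives uniform $\ell^\infty$-bounds on these dual functionals, and the local projection property \eqref{S:proj}, the $L^2$-approximation \eqref{S:app}, and the $H^1$-stability \eqref{S:grad} then follow by standard Bramble--Hilbert and scaling arguments. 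The axiom \eqref{S:local} is reduced, again via Proposition~\ref{prop:bounded number}, to the observation that on an element $T$ far enough from $\TT_\bullet\setminus\TT_\circ$ (it suffices to take $\kloc\gtrsim p$) every hierarchical B-spline touching $T$ is unchanged from $\XX_\bullet$ to $\XX_\circ$, so the local restriction spaces coincide.

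Finally, the oscillation axioms concern the fixed transported polynomial space $\mathcal{P}(\Omega)$. The inverse estimate \eqref{O:inverse} in dual norm is the usual polynomial--bubble argument on the reference element pulled back via $\gamma$. For \eqref{O:dual}–\eqref{O:grad}, I construct the edge lifting $L_{\bullet,E}$ from a Verfürth-type edge bubble multiplied by a polynomial extension, transported to the physical domain; dual inequality, $L^2$- and $H^1$-control then follow from a reference-patch argument combined with shape regularity \eqref{M:shape} and scaling, with constants depending only on $d,C_\gamma,\widehat\TT_0,(q_1,\dots,q_d)$. Once \eqref{M:shape}--\eqref{M:trace}, \eqref{R:sons}--\eqref{R:overlay}, \eqref{S:inverse}--\eqref{S:grad}, and \eqref{O:inverse}--\eqref{O:grad} have been established, the conclusions of Theorem~\ref{thm:main} follow directly by invoking Theorem~\ref{thm:abstract}.
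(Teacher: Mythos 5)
Your proposal follows essentially the same route as the paper: pull everything back to the parameter domain via $\gamma$, use admissibility to control level differences, prove the closure estimate by adapting the Binev--Dahmen--DeVore/Morgenstern argument after characterizing admissible meshes as exactly the output of Algorithm~\ref{alg:refinement}, build the overlay as $\widehat\Omega_\fine^k=\widehat\Omega_\coarse^k\cup\widehat\Omega_\star^k$, and construct the Scott--Zhang projector from dual functionals on designated elements combined with truncated hierarchical B-splines \`a la Speleers. Two small points where your sketch is too quick: first, \eqref{R:reduction} in the \emph{physical} domain does not hold with $\qson=2^{-d}$ --- the paper needs a separate compactness/contradiction argument to produce some $\qson<1$ depending also on $C_\gamma$, since $\gamma$ need not preserve volume ratios; second, for \eqref{M:shape}--\eqref{M:patch} the admissibility condition \eqref{def:admissible} controls levels only of elements in $\NN_\bullet(\widehat T)$, so one still needs the (mildly nontrivial, inductive) Lemma~\ref{lem:patch2neigbor} showing $\Pi_\bullet(\widehat T)\subseteq\NN_\bullet(\widehat T)$ before the reduction to tensor-mesh arguments goes through. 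Neither issue affects the viability of the approach.
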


\subsection{Generalization to rational hierarchical splines}\label{subsec:rational}
One can easily verify that all theoretical results of this work are still valid if one replaces the ansatz space $\XX_\bullet$ by rational hierarchical splines, i.e., by the set
\begin{align}
\XX_\bullet^{W_0}:=\Big\{\frac{V_\bullet}{W_0}:V_\bullet\in\XX_\bullet\Big\},
\end{align}
where $W_0$ is a fixed positive weight function in the initial ansatz space $\XX_0$.
In this case, the corresponding basis  consists of NURBS instead of B-splines.
Indeed, the mesh properties \eqref{M:shape}--\eqref{M:trace} as well as the refinement properties \eqref{R:sons}--\eqref{R:overlay} from Section~\ref{sec:abstract setting} are independent of the discrete spaces.
To verify the validity of Theorem~\ref{thm:main} in the NURBS setting, it thus only remains to verify the properties \eqref{S:inverse}--\eqref{S:grad} for the NURBS finite element spaces.
%Theorem~\ref{thm:main} depends only on the estimator axioms as well as the mesh-refinement properties of Section~\ref{sec:axioms}.
%Clearly, the mesh-refinement properties are independent on the chosen ansatz space.
%So, we just have to verify the estimator axioms.
The inverse estimate \eqref{S:inverse} follows similarly as in Section ~\ref{subsec:E1.1 true} since we only consider a fixed and thus uniformly bounded weight function $W_0\in\XX_0$.
The properties \eqref{S:nestedness}--\eqref{S:local} depend only on the numerator of the NURBS functions and thus transfer.
To see \eqref{S:proj}--\eqref{S:grad}, one can proceed as in Section~\ref{subsec:E4.1 true}, where the corresponding Scott-Zhang type operator $J_\bullet^{W_0}:L^2(\Omega)\to \XX_\bullet^{W_0}$ now  reads 
\begin{align}
J_\bullet^{W_0}v:=\frac{J_\bullet(vW_0)}{W_0}\quad\text{for all }v\in L^2(\Omega).
\end{align}
With this definition,  Lemma~\ref{lem:Scott} holds accordingly, and \eqref{S:proj}--\eqref{S:grad} are proved as in Section~\ref{subsec:E4.1 true}.

% !TEX encoding = MacOSRoman
%!TEX root = igafem.tex

\section{Sketch of proof of Theorem~\ref{thm:abstract}}
\label{sec:proof abstract}

In the following seven  subsections we sketch the  proof of Theorem~\ref{thm:abstract}, where we build  upon the analysis of~\cite{axioms}. 
Recall the residual {\sl a~posteriori} error estimator $\eta_\bullet$ from Section~\ref{subsec:estimator}.

\subsection{Discrete reliability}\label{subsec:discrete reliability}
Under the assumptions \eqref{M:patch}--\eqref{M:trace}, and \eqref{S:nestedness}--\eqref{S:grad}, we show that there exists $\Cdrel,\Cref\ge 1$ such that for all $\TT_\coarse\in\T$ and all $\TT_\fine\in\refine(\TT_\coarse)$, the subset $\RR_{\coarse\fine}:=\Pi_\coarse^\loc(\TT_\coarse\setminus\TT_\fine)\subseteq\TT_\coarse$ satisfies 
\begin{align*}
\norm{U_\fine-U_\coarse}{H^1(\Omega)}\le \Cdrel\,\eta_\coarse(\RR_{\coarse\fine}),
\quad
\TT_\coarse\setminus\TT_\fine\subseteq\RR_{\coarse\fine},
\quad\text{and}\quad
\# \RR_{\coarse\fine}\le\Cref \#(\TT_\coarse\setminus\TT_\fine).
%\Cdrel^2\sum_{T\in\RR_{\coarse\fine}}\eta_\coarse(T)^2.
\end{align*}
The last two properties are obvious with $\Cref=\Cpatch^{\kloc}$ by validity of~\eqref{M:patch} and~\eqref{S:local}. For the first property,
we argue as in~\cite[Theorem~4.1]{stevenson}: Ellipticity \eqref{eq:ellipticity}, $e_\fine:=U_\fine-U_\coarse\in\XX_\fine$ (which follows from \eqref{S:nestedness}), and Galerkin orthogonality~\eqref{eq:galerkin} with $V_\coarse := J_\coarse e_\fine\in\XX_\coarse$ prove 
\begin{align}
&\norm{U_\fine-U_\coarse}{H^1(\Omega)}^2
\lesssim\edual{e_\fine}{e_\fine}
= \edual{e_\fine}{(1-J_\coarse) e_\fine}
%\edual{U_\fine-U_\coarse}{U_\fine-U_\coarse-J_\coarse(U_\fine-U_\coarse)}\\
%\\&\qquad
%=\int_\Omega f(1-J_\coarse)e_\fine \,dx-\int_\Omega \big(\AA\nabla U_\coarse\cdot\nabla(\cdot)+\bb\nabla U_\coarse+cU_\coarse\big)(1-J_\coarse)e_\fine\,dx.
=\int_\Omega f(1-J_\coarse)e_\fine \,dx-\edual{U_\coarse}{(1-J_\coarse)e_\fine}.\notag
%\end{align*}
\intertext{We split $\Omega$ into elements $T\in\TT_\coarse$ and apply elementwise integration by parts, where we denote the outer  normal vector by $\nu$.
With $U_\coarse|_T\in H^2(T)$, this leads to}
%\begin{align*}
\notag
&\qquad=\sum_{T\in\TT_\coarse} \bigg(\int_T f(1-J_\coarse)e_\fine\,dx-\int_T\big(-\div(\AA\nabla U_\coarse)+\bb\nabla U_\coarse+cU_\coarse\big)(1-J_\coarse)e_\fine\,dx\\
\label{eq:dpr1}
&\qquad\qquad\qquad+\int_{\partial T} (\AA \nabla U_\coarse\cdot \nu)\,(1-J_\coarse)e_\fine \,ds\bigg).
\end{align}
The properties \eqref{S:local}--\eqref{S:proj} immediately prove for any $V_\fine\in\XX_\fine$
\begin{align*}%\label{eq:final proj}
 J_\coarse V_\fine
 = V_\fine\quad\text{on }\overline{\Omega\setminus\pi_\coarse^\loc(\TT_\coarse\setminus\TT_\fine)}
 = \overline{\Omega\backslash\bigcup\RR_{\coarse\fine}}.
 = \bigcup(\TT_\coarse\backslash\RR_{\coarse\fine}).
\end{align*}
Hence, the sum in~\eqref{eq:dpr1} reduces from $T\in\TT_\coarse$ to $T\in\RR_{\coarse\fine}$.
Recall that $(1-J_\coarse)e_\fine\in\XX_\fine\subset H^1_0(\Omega)$ with $(1-J_\coarse)e_\fine=0$ on $\partial(\bigcup \RR_{\coarse\fine})$.
We define  the set of facets $\mathcal{E}_{\coarse\fine}:= \set{ T_1\cap  T_2}{T_1,T_2\in\RR_{\coarse\fine}\text{ with }T_1\neq T_2\wedge |T_1\cap T_2|>0}$, where $|\cdot|$ denotes the $(d-1)$-dimensional measure. 
Almost  all $x\in\bigcup\mathcal{E}_{\coarse\fine} $   belong to precisely two elements with opposite normal vectors. Hence, 
\begin{align*}
&\sum_{T\in\RR_{\coarse\fine}}\int_{\partial T} (\AA \nabla U_\coarse\cdot \nu)\,(1-J_\coarse)e_\fine \,ds=\sum_{T\in\RR_{\coarse\fine}}\int_{\partial T\cap \Omega} (\AA \nabla U_\coarse\cdot \nu)\,(1-J_\coarse)e_\fine \,ds\\
&\quad\le\sum_{E\in\mathcal{E}_{\coarse\fine}}\int_{E} \big|[\AA \nabla U_\coarse\cdot \nu]\,(1-J_\coarse)e_\fine\big| \,ds= \frac{1}{2} \sum_{T\in\RR_{\coarse\fine}} \int_{\partial T\cap \Omega}\big|[\AA \nabla U_\coarse\cdot \nu](1-J_\coarse)e_\fine\big|\,ds.
\end{align*}
%As we assumed $\XX_\coarse\in C^1(\overline\Omega)$, the integrals on the boundary $\partial T$ vanish.
Altogether, we have derived
\begin{align}
&\norm{U_\fine-U_\coarse}{H^1(\Omega)}^2
\lesssim \sum_{ T\in\RR_{\coarse\fine}}\bigg(\int_T (f-\LL U_\coarse)(1-J_\coarse)e_\fine\,dx+\int_{\partial T\cap \Omega}\big|[\AA \nabla U_\coarse\cdot \nu](1-J_\coarse)e_\fine\big|\,ds\bigg)\notag\\
&\qquad\le \sum_{ T\in\RR_{\coarse\fine}} \bigg(|T|^{1/d}\norm{f-\LL U_\coarse}{L^2(T)} \, |T|^{-1/d} \norm{(1-J_\coarse)e_\fine}{L^2(T)}\label{eq:E4 equation}\\
&\qquad\qquad\qquad\quad+|T|^{1/(2d)}\norm{[\AA \nabla U_\coarse\cdot \nu]}{L^2(\partial T\cap \Omega)} \,  |T|^{-1/(2d)}\norm{(1-J_\coarse)e_\fine}{L^2(\partial T\cap \Omega)}\bigg).\notag
\end{align}
We abbreviate $\pi_\coarse^{\max}:=\pi_\coarse^{\max(\kapp,\kgrad)}$.
By \eqref{M:trace}, \eqref{S:app}, and \eqref{S:grad}, we have 
\begin{align*}
|T|^{-1/d} \norm{(1-J_\coarse)e_\fine}{L^2(T)}
+ |T|^{-1/(2d)} \norm{(1-J_\coarse)e_\fine}{L^2(\partial T\cap\Omega)}
\lesssim \norm{U_\fine-U_\coarse}{H^1(\pi^{\rm max}_\coarse(T))}.
\end{align*}
%The trace inequality (M3) and \eqref{S:app}--\eqref{S:grad} prove
%\begin{align*}
%&|T|^{-1/d} \norm{(1-J_\coarse)e_\fine}{L^2(\partial T\cap\Omega)}^2
% %\lesssim |T|^{-2/d}\norm{(1-J_\coarse)e_\fine}{L^2(T)}^2\\
% %&\quad+ |T|^{-1/d}\norm{(1-J_\coarse)e_\fine}{L^2(T)}\norm{\nabla (1-J_\coarse)e_\fine}{L^2(T)}
%\lesssim \norm{U_\fine-U_\coarse}{H^1(\pi^{\rm max}_\coarse(T))}^2.
%\end{align*}
Plugging this into \eqref{eq:E4 equation} and using the Cauchy-Schwarz inequality, we obtain
%Finally, the second factor can be controlled with (M2) in combination with \eqref{S:app} by the term $\norm{e_\fine}{H^1(\Omega)}$.
%We conclude 
\begin{align*}
\norm{U_\fine-U_\coarse}{H^1(\Omega)}^2\lesssim \Big(\sum_{ T\in\RR_{\coarse\fine}}\eta_\coarse(T)^2\Big)^{1/2}  \Big(\sum_{T\in\RR_{\coarse\fine}}\norm{U_\fine-U_\coarse}{H^1(\pi^{\rm max}_\coarse(T))}^2\Big)^{1/2}.
\end{align*}
With~\eqref{M:patch}, the second factor is controlled  by $\norm{U_\fine-U_\coarse}{H^1(\Omega)}$.
This concludes the current section, and $\Cdrel$ depends only on $\Cell$, \eqref{M:patch}--\eqref{M:trace}, and \eqref{S:nestedness}--\eqref{S:grad}.

%{\bf Step~4 (uniform mesh-refinement yields convergence).}
%For each mesh $\TT_\coarse^0:=\TT_\coarse\in\TT$, we consider the corresponding uniform refinements $\TT_\coarse^{\ell+1}:=\refine(\TT_\coarse^\ell,\TT_\coarse^\ell)$ for $\ell\in\N_0$. Let $U_\coarse^\ell\in\XX_\coarse^\ell$ be the corresponding Galerkin solution. 
%From~\eqref{M:shape}--\eqref{M:patch}, \eqref{R:union}--\eqref{R:reduction}, and~\eqref{S:app}--\eqref{S:app:H1}, it follows that
%\begin{align*}
% \norm{(1-J_\coarse^\ell)v}{H^1(\Omega)}^2
% \lesssim \sum_{T\in\TT_\coarse^\ell}|T|^{2/d}\,\norm{v}{H^2(T)}^2
% \lesssim \qson^{2\ell/d}\,\norm{v}{H^2(\Omega)}^2
% \xrightarrow{\ell\to\infty}0
% \quad\text{for all }v\in H^1_0(\Omega)\cap H^2(\Omega).
%\end{align*}
%The C\'ea lemma~\eqref{eq:cea} yields $\norm{u-U_\coarse^\ell}{H^1(\Omega)} \lesssim \norm{u-v}{H^1(\Omega)} + \norm{(1-J_\coarse^\ell)v}{H^1(\Omega)}$. Since $H^1_0(\Omega)\cap H^2(\Omega)$ is dense in $H^1_0(\Omega)$, we conclude $\norm{u-U_\coarse^\ell}{H^1(\Omega)}\to0$ as $\ell\to\infty$.

\subsection{Reliability (\ref{eq:reliable})} \label{subsec:reliability}
%Together with Step~4, an elementary density argument and the discrete reliability from Step~3 prove that $\norm{u-U_\coarse}{H^1(\Omega)} \le \Crel\,\eta_\coarse$ even with $\Crel=\Cdrel$; see~\cite[Lemma~3.4]{axioms}.
Note that $\osc_\bullet\lesssim \eta_\bullet$ follows immediately from their definitions~\eqref{eq:eta} --\eqref{eq:osc}.
If one replaces $U_\fine\in\XX_\fine$ by the exact solution $u\in H_0^1(\Omega)$ and $\RR_{\coarse\fine}$ by $\TT_\coarse$, reliability \eqref{eq:reliable} follows along the lines of  Section~\ref{subsec:discrete reliability}, but now,  \eqref{S:nestedness}--\eqref{S:proj} are not needed for the proof.

\subsection{Efficiency (\ref{eq:efficient})}
%The upper bound  in \eqref{eq:equivalence} follows immediately from reliability \eqref{eq:reliable} and the definition \eqref{eq:osc} of $\osc_\bullet$.
As in \cite[Theorem 7]{nv}, the assumptions \eqref{M:shape}--\eqref{M:patch} and \eqref{O:inverse}--\eqref{O:grad} imply that
\begin{align}\label{eq:efficienttwo}
\eta_\coarse\lesssim \norm{u-U_\coarse}{H^1(\Omega)}+\osc_\coarse(U_\coarse).
\end{align}
%The converse estimate is the reliability~\eqref{eq:reliable}.
As in \cite[Proposition 3.3]{ckns}, the assumptions~\eqref{M:shape}--\eqref{M:trace} and~\eqref{S:inverse}  imply that
\begin{align}\label{eq:osc monoton}
\osc(U_\coarse) \lesssim \osc_\coarse(V_\coarse) +\norm{U_\coarse-V_\coarse}{H^1(\Omega)}\quad\text{for all }V_\coarse\in \XX_\coarse.
\end{align}
%\eqref{eq:efficienttwo}--\eqref{eq:osc monoton} and the C\'ea lemma~\eqref{eq:cea} conclude the lower bound.
The C\'ea lemma~\eqref{eq:cea} and \eqref{eq:osc monoton} show that 
\begin{align*}
\norm{u-U_\coarse}{H^1(\Omega)}+\osc_\coarse(U_\coarse) &\stackrel{\eqref{eq:osc monoton}}{\lesssim} \norm{u-U_\coarse}{H^1(\Omega)} +\osc_\coarse(V_\coarse) +\norm{u-V_\coarse}{H^1(\Omega)}\\
&\stackrel{\eqref{eq:cea}}{\lesssim} \norm{u-V_\coarse}{H^1(\Omega)} +\osc_\coarse(V_\coarse)\quad\text{for all } V_\coarse \in\XX_\coarse.
\end{align*}
This proves $\norm{u-U_\coarse}{H^1(\Omega)}+\osc_\coarse(U_\coarse)\simeq\inf_{V_\coarse\in\XX_\coarse}\big(\norm{u-V_\coarse}{H^1(\Omega)}+\osc_\coarse(V_\coarse)\big)$.
Combining these observations, we conclude \eqref{eq:efficient}, where $\C{eff}$ depends only on $\C{ell}$, \eqref{M:shape}--\eqref{M:trace}, \eqref{S:inverse} and ~\eqref{O:inverse}--\eqref{O:grad}.

\subsection{Stability on non-refined elements}\label{subsec:stability}
As in~\cite[Corollary~3.4]{ckns}, the assumptions~\eqref{M:shape}--\eqref{M:trace} and~\eqref{S:inverse} imply the existence of $\Cstab\ge1$ such that for all $\TT_\coarse\in\T$, all $\TT_\fine\in\refine(\TT_\coarse)$, and all subsets $\SS\subseteq\TT_\coarse\cap\TT_\fine$ of non-refined elements, it holds that 
\begin{align*}
|\eta_\coarse(\SS)-\eta_\fine(\SS)|\le \Cstab\norm{U_\coarse-U_\fine}{H^1(\Omega)}.
\end{align*}
The constant $\Cstab$ depends only on  \eqref{M:shape}--\eqref{M:trace}, \eqref{S:inverse},  as well as on $\norm{\AA}{W^{1,\infty}(\Omega)}$, $\norm{\bb}{L^\infty(\Omega)}$, $\norm{c}{L^\infty(\Omega)}$, and $\diam(\Omega)$.

\subsection{Reduction on refined elements}\label{subsec:reduction}
As in~\cite[Corollary 3.4]{ckns}, the assumptions~\eqref{M:shape}--\eqref{M:trace}, \eqref{R:union}--\eqref{R:reduction}, and \eqref{S:inverse} imply the existence of $\Cred\ge1$ and $0<\qred<1$ such that for all $\TT_\coarse\in\T$ and all $\TT_\fine\in\refine(\TT_\coarse)$, it holds that
\begin{align*}
\eta_\fine(\TT_\fine\setminus\TT_\coarse)^2
\le \qred \, \eta_\coarse(\TT_\coarse\backslash\TT_\fine)^2 
+ \Cred\norm{U_\fine-U_\coarse}{H^1(\Omega)}^2.
\end{align*}
The constants $\Cred$ and $\qred=\qson^{1/d}$ depend only on  \eqref{M:shape}--\eqref{M:trace}, \eqref{R:union}--\eqref{R:reduction},  \eqref{S:inverse}  as well as on $\norm{\AA}{W^{1,\infty}(\Omega)}$, $\norm{\bb}{L^\infty(\Omega)}$, $\norm{c}{L^\infty(\Omega)}$, and $\diam(\Omega)$. 
Note that $[\AA\nabla U_\coarse\cdot \nu]=0$ on $(\partial T'\setminus\partial T)\cap \Omega$ for all sons $T'\subsetneqq T$ of an element $T\in\TT_\coarse$, since $U_\coarse|_T\in H^2(T)$.

\subsection{Estimator reduction principle}\label{subsec:estimator reduction}
Choose sufficiently small $\delta>0$ such that $0<\qest := (1+\delta)\big(1-(1-\qred)\theta\big)<1$ and define $\Cest:=\Cred+(1+\delta^{-1})\Cstab^2$. With Section~\ref{subsec:stability}--\ref{subsec:reduction}, elementary calculation shows that
\begin{align}\label{eq:estred}
 \eta_{\ell+1}^2 \le \qest\,\eta_\ell^2 + \Cest\,\norm{U_{\ell+1}-U_\ell}{H^1(\Omega)}^2
 \quad\text{for all }\ell\in\N_0;
\end{align}
see~\cite[Lemma~4.7]{axioms}. Nestedness~\eqref{S:nestedness} ensures that $\XX_\infty:=\overline{\bigcup_{\ell\in\N_0}\XX_\ell}$ is a closed subspace of $H^1_0(\Omega)$ and hence admits a unique Galerkin solution $U_\infty\in\XX_\infty$. Note that $U_\ell$ is also a Galerkin approximation of $U_\infty$. Hence, the C\'ea lemma~\eqref{eq:cea} with $u$ replaced by $U_\infty$ and a density argument prove $\norm{U_\infty-U_\ell}{H^1(\Omega)}\to0$ as $\ell\to\infty$. Elementary calculus, estimator reduction~\eqref{eq:estred}, and Section~\ref{subsec:reliability} thus prove $\norm{u-U_\ell}{H^1(\Omega)} \lesssim \eta_\ell \to0$ as $\ell\to\infty$. This proves $U_\ell\to U_\infty=u$; see~\cite[Section~2]{MR2908795} for the detailed argument.

\subsection{General quasi-orthogonality} \label{subsec:quasi-orthogonality}
By use of  reliability from Section~\ref{subsec:reliability}, the plain convergence result from Section~\ref{subsec:estimator reduction} and a perturbation argument (since the non-symmetric part of $\LL$ is compact), it is shown in~\cite[Proof of Theorem~8]{ffp14} that
\begin{align}\label{eq:quasi-orthogonality}
 \sum_{k=\ell}^N\big(\norm{U_{k+1}\!-\!U_k}{H^1(\Omega)}^2-\varepsilon\,\norm{u\!-\!U_k}{H^1(\Omega)}^2\big)
 \le C(\eps)\,\eta_\ell^2 \text{ for all }0\le\ell\le N \text{ and all }\varepsilon>0.
\end{align}
The constant $C(\eps)$ depends on $\eps>0$, the operator $\LL$, the sequence $(U_\ell)_{\ell\in\N_0}$, and $\Crel$.
See also~\cite[Proposition~6.1]{axioms} for a short paraphrase of the proof.

\begin{remark}\label{rem:E3}
If the bilinear form $\edual{\cdot}{\cdot}$ is symmetric,~\eqref{eq:quasi-orthogonality} follows from the Pythagoras theorem in the $\LL$-induced energy norm $\enorm{v}^2:=\edual{v}{v}$ and norm equivalence
\begin{align*}
 \sum_{k=\ell}^N\norm{U_{k+1}-U_k}{H^1(\Omega)}^2
 \simeq \sum_{k=\ell}^N\enorm{U_{k+1}-U_k}^2
 = \enorm{u-U_\ell}^2-\enorm{u-U_N}^2 
 \lesssim \norm{u-U_\ell}{H^1(\Omega)}^2.
\end{align*}
Together with reliability~\eqref{eq:reliable}, this proves~\eqref{eq:quasi-orthogonality} even for $\varepsilon=0$, and $C(\eps)\simeq\Crel^2$ is independent of the sequence $(U_\ell)_{\ell\in\N_0}$.\qed
\end{remark}%

\subsection{Linear convergence with optimal rates}
The remaining claims \eqref{eq:linear}--\eqref{eq:optimal} follow from~\cite[Theorem~4.1]{axioms} which only relies on \eqref{R:sons}, \eqref{eq:R:refine}, \eqref{R:closure}--\eqref{R:overlay}, \emph{stability} (Section~\ref{subsec:stability}), \emph{reduction} (Section~\ref{subsec:reduction}), \emph{quasi-orthogonality} (Section~\ref{subsec:quasi-orthogonality}), \emph{discrete reliability} (Section~\ref{subsec:discrete reliability}), and reliability (Section~\ref{subsec:reliability}).

% !TEX encoding = MacOSRoman
%!TEX root = igafem.tex

\section{Proof of Theorem~\ref{thm:main}}\label{sec:proof}

\subsection{Admissibility and $\refine$}
In this section, we  show that, given a mesh $\widehat\TT_\coarse\in\widehat\T$, iterative application of the refinement  Algorithm  \ref{alg:refinement} generates exactly the set of all admissible meshes $\widehat\TT_\fine$ that are finer than $\widehat\TT_\coarse$. 
In particular, this implies that $\widehat \T$ coincides with the set of all admissible hierarchical meshes that are finer than $\widehat\TT_0$, which we has already been mentioned in Section~\ref{subsec:concrete refinement}.
We start with the following lemma.
\begin{lemma}\label{lem:active supports shrink}
Let $\widehat\TT_\coarse$ and $\widehat\TT_\circ$ be hierarchical meshes such that $\widehat\TT_\circ$ is finer than $\widehat\TT_\coarse$, i.e., $\widehat\Omega_\coarse^k\subseteq \widehat\Omega_\circ^k$ for all $k\in\N_0$.
Then, for all $\widehat\beta_\circ\in\widehat\HH_\circ$ there exists $\widehat\beta_\coarse\in\widehat\HH_\coarse$ with $\supp(\widehat\beta_\circ)\subseteq\supp(\widehat\beta_\coarse)$.
\end{lemma}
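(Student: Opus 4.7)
The plan is to exploit the characterization \eqref{eq:short cHH} of the hierarchical basis and walk downward through the levels, starting from $k:=\level(\widehat\beta_\circ)$. The candidate for $\widehat\beta_\coarse$ will be a B-spline $\widehat\beta^{k^*}\in\widehat\BB^{k^*}$ for some level $k^*\in\{0,1,\dots,k\}$ whose support still contains $\supp(\widehat\beta_\circ)$ but which already passes the two membership tests $\supp(\widehat\beta^{k^*})\subseteq\widehat\Omega_\coarse^{k^*}$ and $\supp(\widehat\beta^{k^*})\not\subseteq\widehat\Omega_\coarse^{k^*+1}$ that certify membership in $\widehat\HH_\coarse$. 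The first test is easy to satisfy at some level because $\widehat\Omega_\coarse^0=\overline{\widehat\Omega}$ makes it trivial at $k'=0$; the challenge is to make both hold simultaneously.

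The key auxiliary fact I will establish first is that for every $k'\ge1$ and every $\widehat\beta\in\widehat\BB^{k'}$ there exists a coarser $\widehat\beta'\in\widehat\BB^{k'-1}$ with $\supp(\widehat\beta)\subseteq\supp(\widehat\beta')$. This drops out of the two-scale relation \eqref{eq:twoscale1}--\eqref{eq:twoscale} once one notes that, by the partition-of-unity property of $\widehat\BB^{k'}$ and the local linear independence of $\widehat\BB^{k'}$, each fine B-spline must appear with \emph{positive} coefficient in the expansion of at least one coarse B-spline; the non-negativity from \eqref{eq:twoscale} then forces support containment. Iterating this starting from $\widehat\beta^k:=\widehat\beta_\circ$ yields a chain $\widehat\beta^k,\widehat\beta^{k-1},\dots,\widehat\beta^0$ with $\widehat\beta^{k'}\in\widehat\BB^{k'}$ and $\supp(\widehat\beta_\circ)\subseteq\supp(\widehat\beta^{k'})$ for every $k'\in\{0,\dots,k\}$.

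Now I pick $k^*$ to be the \emph{largest} index in $\{0,\dots,k\}$ for which $\supp(\widehat\beta^{k^*})\subseteq\widehat\Omega_\coarse^{k^*}$; the trivial case $k'=0$ shows that this maximum exists. The first membership condition in \eqref{eq:short cHH} holds by construction, so $\widehat\beta^{k^*}\in\widehat\HH_\coarse$ will be proven once I verify $\supp(\widehat\beta^{k^*})\not\subseteq\widehat\Omega_\coarse^{k^*+1}$. I handle two subcases: if $k^*=k$, then $\widehat\Omega_\coarse^{k+1}\subseteq\widehat\Omega_\circ^{k+1}$ combined with $\supp(\widehat\beta_\circ)\not\subseteq\widehat\Omega_\circ^{k+1}$ gives the claim immediately. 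If $k^*<k$, maximality of $k^*$ says $\supp(\widehat\beta^{k^*+1})\not\subseteq\widehat\Omega_\coarse^{k^*+1}$, and since the chain was built so that $\supp(\widehat\beta^{k^*+1})\subseteq\supp(\widehat\beta^{k^*})$, the non-containment propagates upward. Setting $\widehat\beta_\coarse:=\widehat\beta^{k^*}$ finishes the proof because $\supp(\widehat\beta_\circ)\subseteq\supp(\widehat\beta^{k^*})$ was arranged from the start.

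The only genuinely delicate point is the auxiliary fact that every fine B-spline is contained in some coarse one. A naive argument via knot vectors becomes cumbersome because of non-uniform initial knots and variable multiplicities at interior knots; the partition-of-unity argument above seems cleanest, but one must be careful that the summed coefficients $\sum_{\widehat\beta^k} c_{\widehat\beta^{k+1}}^{\widehat\beta^k}=1$ really follow from local linear independence applied to $1=\sum_{\widehat\beta^k}\widehat\beta^k$ expanded via \eqref{eq:twoscale1}. Everything else is bookkeeping with the inclusions $\widehat\Omega_\coarse^j\subseteq\widehat\Omega_\circ^j$ and with \eqref{eq:short cHH}.
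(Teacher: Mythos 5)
Your proof is correct and follows essentially the same route as the paper's: both construct a chain of coarser B-splines with nested supports via the two-scale relation and then use the characterization \eqref{eq:short cHH} to locate the level at which both membership conditions for $\widehat\HH_\coarse$ hold. The only difference is presentational (you build the full chain down to level $0$ and then select the maximal admissible level, while the paper descends one level at a time and stops as soon as membership holds), and the stopping index is the same in both arguments.
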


\begin{proof}
Clearly, we may assume $\widehat\beta_\circ\in\widehat\HH_\circ\setminus \widehat\HH_\coarse$.
Let $k:=\level(\widehat\beta_\circ)$ and define $\widehat\beta^k:=\widehat\beta_\circ$.
Since $\widehat\beta^k\in\widehat\HH_\circ$, \eqref{eq:short cHH} implies that  $\supp(\widehat\beta^k)\setminus \widehat\Omega_\circ^{k+1}\neq\emptyset$ and $\supp(\widehat\beta^k)\subseteq\widehat\Omega_\circ^k$.
Since $\widehat\beta^k\not \in\widehat\HH_\coarse$, \eqref{eq:short cHH} implies that  $\supp(\widehat\beta^k)\setminus\widehat\Omega_\coarse^{k+1}=\emptyset$ or $\supp(\widehat\beta^k)\not\subseteq\widehat\Omega_\coarse^k$.
However, $\widehat\Omega_\coarse^{k+1}\subseteq\widehat\Omega_\circ^{k+1}$ and $\supp(\widehat\beta^k)\setminus\widehat\Omega_\circ^{k+1}\neq\emptyset$ imply that  $\supp(\widehat\beta^k)\setminus\widehat\Omega_\coarse^{k+1}=\emptyset$.
Hence, we have $\supp(\widehat\beta^k)\not\subseteq\widehat\Omega_\coarse^k$, which especially implies $k>0$.
This is equivalent to $\supp(\widehat\beta^k)\setminus\widehat\Omega_\coarse^k\neq \emptyset$.
Clearly, there exists $\widehat\beta^{k-1}\in\BB^{k-1}$ with $\supp(\widehat\beta^k)\subseteq\supp(\widehat\beta^{k-1})$.
If $\widehat\beta^{k-1}\in\widehat\HH_\coarse$, we are done. Otherwise, \eqref{eq:short cHH} implies that  $\supp(\widehat\beta^{k-1})\setminus\widehat\Omega_\coarse^k=\emptyset$ or $\supp(\widehat\beta^{k-1})\not\subseteq\widehat\Omega_\coarse^{k-1}$.
Again, the first case is not possible because 
\begin{align*}
\supp(\widehat\beta^{k-1})\setminus\widehat\Omega_\coarse^k\supseteq\supp(\widehat\beta^k)\setminus\widehat\Omega_\coarse^k\neq\emptyset.
\end{align*}
Hence, we have $\supp(\widehat\beta^{k-1})\not\subseteq\widehat\Omega_\coarse^{k-1}$ which especially implies $k-1>0$.
This is equivalent to $\supp(\widehat\beta^{k-1})\setminus\widehat\Omega_\coarse^{k-1}\neq\emptyset$.
Inductively, we obtain a sequence $\widehat\beta^k,\dots,\widehat\beta^K$ with $\widehat\beta^j\in\BB^j$ and $\supp(\widehat\beta^K)\supseteq\dots\supseteq\supp(\widehat\beta^k)$, where $\widehat\beta^K\in\widehat\HH_\coarse$ for some $K\ge 0$.
\end{proof}

%For the proof of the  next result,  
%we introduce the following notation.
%For a hierarchical  mesh $\widehat\TT_\coarse$ and some finer mesh $\widehat\TT_\circ$, we define for $\widehat T\in\widehat\TT_\circ$ the unique father element 
%\begin{align}
%\fat_\coarse(\widehat T)\in\widehat\TT_\coarse\quad\text{with}\quad \widehat T\subseteq \fat_\coarse(\widehat T).
%\end{align}\label{eq:father}
%If $T$ is obtained from $\fat_\coarse(\widehat T)$ by bisection, we have $\level(\fat_\coarse(\widehat T))=\level(\widehat T)-1$.
%However, if one assumes $\widehat\TT_\coarse\in\T$, we will see in the proof that

\begin{proposition}\label{prop:refineT subset T}
If $\widehat\TT_\coarse\in\widehat\T$, then $\refine(\widehat\TT_\coarse)$ coincides with the set of all admissible hierarchical meshes $\widehat\TT_\circ\in\widehat\T$ that are finer than $\widehat\TT_\coarse$.
\end{proposition}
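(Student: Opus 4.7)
The plan is to establish the equality by proving two inclusions separately.

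For the easy inclusion, since $\refine(\widehat\TT_\coarse)$ is defined by iterated $\refine(\cdot,\cdot)$ applications, it suffices to verify that a single step $\widehat\TT_\circ = \refine(\widehat\TT_\bullet, \widehat\MM_\bullet)$ produces an admissible mesh finer than $\widehat\TT_\bullet$ whenever $\widehat\TT_\bullet \in \widehat\T$. The finer property is immediate from~\eqref{eq:bisection}. For admissibility~\eqref{def:admissible}, the closure loop terminates precisely when no element of the current marked set $\widehat\MM_\bullet^{(i)}$ has an unmarked bad neighbor, so after the final bisection I would verify \eqref{def:admissible} in $\widehat\TT_\circ$ via a short case analysis on whether each side of a potential neighbor pair was bisected, reducing to admissibility of $\widehat\TT_\bullet$ and the termination condition of the closure.

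For the harder inclusion, given admissible $\widehat\TT_\circ$ finer than $\widehat\TT_\coarse$, I would induct on the refinement-depth quantity $\Psi(\widehat\TT_\coarse, \widehat\TT_\circ) := \sum_{\widehat T \in \widehat\TT_\circ}(\level(\widehat T) - \level(\widehat T_\coarse))$, where $\widehat T_\coarse$ denotes the unique ancestor of $\widehat T$ in $\widehat\TT_\coarse$. If $\Psi = 0$, then $\widehat\TT_\coarse = \widehat\TT_\circ$ and we are done. Otherwise, pick any $\widehat T^* \in \widehat\TT_\coarse \setminus \widehat\TT_\circ$ and set $\widehat\TT_\star := \refine(\widehat\TT_\coarse, \{\widehat T^*\})$. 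The crux is that $\widehat\TT_\circ$ remains finer than $\widehat\TT_\star$, equivalently, that every element bisected during the closure loop of Algorithm~\ref{alg:refinement} lies in $\widehat\TT_\coarse \setminus \widehat\TT_\circ$. Granted the crux, $\widehat\TT_\star \in \widehat\T$ by the easy inclusion, $\Psi$ drops strictly, and the induction hypothesis applied to $(\widehat\TT_\star, \widehat\TT_\circ)$ places $\widehat\TT_\circ$ into $\refine(\widehat\TT_\star) \subseteq \refine(\widehat\TT_\coarse)$.

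To establish the crux, I would proceed by nested induction on the closure iteration $i$, showing $\widehat\MM_\coarse^{(i)} \subseteq \widehat\TT_\coarse \setminus \widehat\TT_\circ$; the base $i=0$ holds by our choice of $\widehat T^*$. Let $\widehat T' \in \widehat\UU_\coarse^{(i)}$ be a newly marked bad neighbor of some $\widehat T \in \widehat\MM_\coarse^{(i)}$, with $\level(\widehat T') = \level(\widehat T) - 1 =: k-1$. By~\eqref{eq:neigbors} there is $\widehat\beta \in \widehat\HH_\coarse$ with $\widehat T, \widehat T' \subseteq \supp(\widehat\beta)$, and admissibility of $\widehat\TT_\coarse$ combined with~\eqref{eq:level beta is} pins down $\level(\widehat\beta) = k-1$. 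Suppose for contradiction $\widehat T' \in \widehat\TT_\circ$; from $\widehat T \subseteq \widehat\Omega_\circ^{k+1}$ we obtain some $\widehat T^{**} \in \widehat\TT_\circ$ with $\widehat T^{**} \subseteq \widehat T$ and $\level(\widehat T^{**}) \ge k+1$. If $\widehat\beta \in \widehat\HH_\circ$, then $\widehat T^{**}, \widehat T' \subseteq \supp(\widehat\beta)$ exhibits them as neighbors in $\widehat\TT_\circ$ with level difference $\ge 2$, violating admissibility. Otherwise $\widehat\beta \not\in \widehat\HH_\circ$: combined with $\supp(\widehat\beta) \subseteq \widehat\Omega_\coarse^{k-1} \subseteq \widehat\Omega_\circ^{k-1}$, the characterization~\eqref{eq:short cHH} forces $\supp(\widehat\beta) \subseteq \widehat\Omega_\circ^k$, so $\widehat T' \subseteq \widehat\Omega_\circ^k$, contradicting $\widehat T' \in \widehat\TT_\circ$ at level $k-1$.

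The main obstacle is this crux claim, whose resolution hinges on the observation that admissibility of the intermediate mesh $\widehat\TT_\coarse$ locks $\level(\widehat\beta)$ at exactly $k-1$, enabling the clean dichotomy above. Without this level-locking, the linking basis function could have a strictly lower level and one would have to invoke the two-scale refinement relations~\eqref{eq:twoscale1}--\eqref{eq:twoscale} together with Lemma~\ref{lem:active supports shrink} to produce a witness basis function in $\widehat\HH_\circ$ by hand, which would be considerably more delicate.
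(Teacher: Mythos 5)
Your proposal is correct and follows essentially the same route as the paper: the easy inclusion via a case analysis showing a single refinement step preserves admissibility, and the hard inclusion via reduction to single-element refinements $\widehat\TT_\star=\refine(\widehat\TT_\coarse,\{\widehat T^*\})$, with the crux resting on the same level-locking of the linking basis function through admissibility of $\widehat\TT_\coarse$ and~\eqref{eq:level beta is}, and its membership in $\widehat\HH_\circ$ via~\eqref{eq:short cHH}. The only difference is organizational: you prove $\widehat\MM_\coarse^{(i)}\subseteq\widehat\TT_\coarse\setminus\widehat\TT_\circ$ by forward induction on the closure iterations, whereas the paper argues by contradiction and traces the chain of bad neighbors backward from a surviving refined element; the two are logically equivalent.
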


\begin{proof}
We prove the assertion in four steps.

\textbf{Step 1:}
We show that $\widehat\TT_\circ:=\refine(\widehat\TT_\coarse,\widehat\MM_\coarse)\in\widehat\T$ for any $\widehat\MM_\coarse\subseteq\widehat\TT_\coarse$.
Let $\widehat T,\widehat T'\in\TT_\circ$ with $\widehat T'\in\NN_\circ(\widehat T)$, i.e., there exists $\widehat \beta_\fine\in\widehat \HH_\fine$ with $|\widehat T\cap\supp(\widehat \beta_\fine)|\neq0\neq  |\widehat T'\cap\supp(\widehat \beta_\fine)|$; see \eqref{eq:neigbors}.
By Lemma \ref{lem:active supports shrink}, there exists some (not necessarily unique) $ \widehat \beta_\coarse\in\widehat \HH_\coarse$ with $\supp(\widehat \beta_\fine)\subseteq\supp( \widehat \beta_\coarse)$.
%As in the previous proof, w
We consider four different cases.
\begin{enumerate}
\item[(i)]
Let $\widehat T,\widehat T'\in\widehat \TT_\coarse$.
 Then, $|\widehat T\cap\supp(\widehat \beta_\coarse)|\neq0\neq| \widehat T'\cap\supp(\widehat \beta_\coarse)|$, i.e., $\widehat T'\in\NN_\coarse(\widehat T)$ and hence $|\level(\widehat T)-\level(\widehat T')|\le 1$ by $\widehat \TT_\coarse\in\widehat \T$.
 \item[(ii)]
Let $\widehat T,\widehat T'\in\widehat \TT_\circ\setminus\widehat \TT_\coarse$.
%then there are $\fat_\coarse(T),\fat_\coarse(T')$ with $T\subset \fat_\coarse(T), T'\subset \fat_\coarse(T')$ and $\level(\fat_\coarse(T))=\level(T)-1,\level(\fat_\coarse(T'))=\level(T')-1$.
Let $\widehat T_\coarse,\widehat T_\coarse '\in\widehat\TT_\coarse$ with $\widehat T\subsetneqq\widehat T_\coarse$, $\widehat T'\subsetneqq\widehat T_\coarse'$.
Then, it holds that  $\level(\widehat T)=\level(\widehat T_\coarse)+1$, $\level(\widehat T')=\level(\widehat T_\coarse')+1$ as well as  $|\widehat T_\coarse\cap\supp(\widehat \beta_\coarse)|\neq0\neq |\widehat T_\coarse'\cap\supp(\widehat \beta_\coarse)|$.
By definition, it follows that $\widehat T_\coarse'\in\NN_\coarse(\widehat T_\coarse)$ and hence $|\level(\widehat T)-\level(\widehat T')|=|\level(\widehat T_\coarse)-\level(\widehat T'_\coarse)|\le 1$ by $\widehat \TT_\coarse\in\widehat \T$.
\item[(iii)]
Let $\widehat T\in\widehat \TT_\circ\setminus\widehat \TT_\coarse, \widehat T'\in\widehat \TT_\coarse$.
Let $\widehat T_\coarse\in\widehat\TT_\coarse$ with $\widehat T\subsetneqq\widehat T_\coarse$.
Then, 
%As before, we see $|\level(\fat_\coarse(T))-\level(T')|\le 1$.
$|\widehat T_\coarse\cap\supp(\widehat \beta_\coarse)|\neq 0\neq |\widehat T'\cap\supp(\widehat \beta_\coarse)|$, and  $|\level(\widehat T_\coarse)-\level(\widehat T')|\le 1$ by  $\widehat \TT_\coarse\in\widehat \T$.
We argue by contradiction and assume $|\level(\widehat T)-\level(\widehat T')|>1$. 
Together with $\level(\widehat T_\coarse)+1=\level(\widehat T)$, this yields  $\level(\widehat T_\coarse)-1=\level(\widehat T')$. 
% \eqref{eq:level beta is} applies for  the active basis function $\widehat \beta_\coarse\in\HH_\coarse$.
%Therefore, by $\widehat \TT_\coarse\in\widehat \T$, there holds $|\level(\fat_\coarse(T))-\level(\widehat \beta_\coarse)|\le 1$.
%This implies the existence of some $\widetilde\widehat \beta\in\BB^{\level(\fat_\coarse(T))-1}$ with $\supp(\widehat \beta_\coarse)\subseteq\supp(\widetilde\widehat \beta)$.
Hence, $\widehat T'\in\NN_\coarse^{\rm bad}(\widehat T_\coarse)$ with $\widehat T_\coarse\in\widehat\MM_\coarse^{(\rm end)}$.
By Algorithm \ref{alg:refinement} {(\rm i)}, we get $\widehat T'\in\widehat\MM_\coarse^{(\rm end)}$.
This contradicts $\widehat T'\in\widehat \TT_\coarse$ and hence proves $|\level(\widehat T)-\level(\widehat T')|\le 1$.
\item[(iv)]
Let $\widehat T\in\widehat \TT_\coarse, \widehat T'\in\widehat \TT_\circ\setminus \widehat \TT_\coarse$.
Since $\widehat T'\in\NN_\coarse(\widehat T)$ is equivalent to $\widehat T\in\NN_\coarse(\widehat T')$, we argue as in (iii) to conclude $|\level(\widehat T)-\level(\widehat T')|\le 1$.
\end{enumerate}

\textbf{Step 2:} 
It is clear that an arbitrary $\widehat\TT_\circ\in \refine(\widehat\TT_\coarse)$ is finer than $\widehat\TT_\coarse$.
By induction, Step~1 proves the inclusion $\refine(\widehat\TT_\coarse)\subseteq\widehat\T$.

\textbf{Step 3:}
To prove the converse inclusion, let $\widehat\TT_\circ\in\widehat\T$ be an admissible mesh that is finer than $\widehat\TT_\coarse$.
Moreover, let $\widehat T\in\widehat\TT_\coarse\setminus\widehat\TT_\circ$.
We show that $\widehat\TT_\circ$ is also finer than $\widehat\TT_\star:=\refine(\widehat\TT_\coarse,\{\widehat T\})$.
We argue by contradiction and suppose that  $\widehat\TT_\circ$ is not finer than $\widehat\TT_\star$.
Since $\refine$ bisects each element of $\widehat\TT_\coarse$ at most once, there  exists a refined element $\widehat T^{(0)}\in\widehat\TT_\bullet\backslash\widehat\TT_\star$ which is also in $\widehat\TT_\fine$, i.e., $\widehat T^{(0)}\in(\widehat\TT_\coarse\setminus\widehat\TT_\star)\cap\widehat\TT_\fine$.
In particular, $\widehat T^{(0)}\neq\widehat T\in\widehat\TT_\bullet\backslash\widehat\TT_\circ$.
%Clearly, we have $\widehat T\neq\widehat T^{(0)}$.
Thus, Algorithm~\ref{alg:refinement} shows that $\widehat T^{(0)}\in\NN^{\rm bad}_\coarse(\widehat T^{(1)})$  for some $\widehat T^{(1)}\in\widehat\TT_\coarse\setminus\widehat\TT_\star$.
If $\widehat T^{(1)}\in\widehat\TT_\fine$ and hence $\widehat T^{(1)}\in (\widehat\TT_\coarse\setminus\widehat\TT_\star)\cap\widehat\TT_\fine$, we have again $\widehat T^{(1)}\neq \widehat T$ as well as $\widehat T^{(1)}\in\NN^{\rm bad}_\coarse(\widehat T^{(2)})$  for some $\widehat T^{(2)}\in\widehat\TT_\coarse\setminus\widehat\TT_\star$.
Inductively, we see the existence of $\widehat\TT^{(J-1)}\in(\widehat\TT_\coarse\setminus\widehat\TT_\star)\cap\widehat\TT_\fine$ such that $\widehat T^{(J-1)}\in\NN^{\rm bad}_\coarse(\widehat T^{(J)})$  for some $\widehat T^{(J)}\in\widehat\TT_\coarse\setminus\widehat\TT_\star$ with $\widehat T^{(J)}\not\in\widehat\TT_\fine$.
%Without loss of generality, we may even assume $\widehat T^{(J)}\in\widehat\TT_\coarse\setminus\widehat\TT_\fine$, which
In particular, this implies the existence of $\widehat T_\fine^{(J)}\in\widehat\TT_\fine$ with $\widehat T_\fine^{(J)}\subsetneqq \widehat T^{(J)}$.
%This and  Algorithm~\ref{alg:refinement} prove  
%\begin{equation*}
%\exists\widehat T^{(J)}\in\widehat\TT_\coarse\setminus\widehat\TT_\star, \widehat T_\fine^{(J)}\in\widehat\TT_\circ \qquad\widehat T^{(J)}_\circ\subsetneqq \widehat T^{(J)} \wedge \widehat T^{(J)}^{(J)}\in\NN_\coarse^{\rm bad}(\widehat T^{(J)})\cap\widehat\TT_\circ.
%\end{equation*}

By  definition of $\NN_\coarse^{\rm bad}(\cdot)$, we have 
   $\widehat T^{(J)},\widehat T^{(J-1)}\subseteq\supp(\widehat\beta)$ for some $\widehat\beta\in\widehat\HH_\coarse$ as well as $\level(\widehat T^{(J-1)})=\level(\widehat T^{(J)})-1$.
Hence, \eqref{eq:level beta is} and $\widehat\TT_\coarse\in\widehat\T$ show $k:=\level(\widehat\beta)=\level(\widehat T^{(J-1)})$.
Since $\widehat T^{(J-1)}\in\widehat \TT_\fine$,
\eqref{eq:parameter mesh} implies $\widehat T^{(J-1)}\not\subseteq\widehat\Omega_\fine^{k+1}$ and hence $\supp(\widehat\beta)\not\subseteq \widehat\Omega_\fine^{k+1}$.
Moreover, \eqref{eq:short cHH} shows $\supp(\widehat\beta)\subseteq\widehat\Omega_\coarse^k\subseteq\widehat\Omega_\fine^k$.
Using \eqref{eq:short cHH} again, we see  $\widehat\beta\in\widehat\HH_\circ$. 
Together with $\widehat T_\fine^{(J)},\widehat T^{(J-1)}\subseteq\supp(\widehat\beta)$ and $\level( \widehat T_\fine^{(J)})\ge\level(\widehat T^{(J)})+1=\level(\widehat T^{(J-1)})+2$, this contradicts  admissibility of $\widehat\TT_\circ\in\widehat\T$, and concludes the proof.
%As before, Lemma~\ref{lem:active supports shrink} shows $\widehat T^{(J)}^{(J)}\in\NN_\circ^{\rm bad}({\rm son}_\circ(\widehat T^{(J)}))$

\textbf{Step 4:}
Let again $\widehat\TT_\circ\in\widehat\T$ be an arbitrary admissible mesh that is finer than $\widehat\TT_\coarse$.
Step 3 together with Step 2 shows that we can iteratively refine $\widehat \TT_\coarse$ and obtain a sequence $\widehat\TT_{(0)},\dots,\widehat\TT_{(J)}$ with $\widehat\TT_\coarse=\widehat\TT_{(0)}$, $\widehat\TT_{(j+1)}=\refine(\widehat\TT_{(j)},\{\widehat T_{(j)}\})$ with some $\widehat T_{(j)}\in\widehat\TT_{(j)}\setminus\widehat\TT_{(j+1)}$  for $j=1,\dots,J-1$ and $\widehat\TT_{(J)}=\widehat\TT_\fine$.
By definition, this proves $\widehat\TT_\fine\in\refine(\widehat\TT_\coarse)$.
%The rest follows by the already proved inclusion and iterative refinement until $\widehat\TT_\circ$ is reached.
\end{proof}

\subsection{Verification of (\ref{M:shape})--(\ref{M:trace})}\label{subsec:M true}
The  mesh properties   \eqref{M:shape}--\eqref{M:trace} essentially follow from admissibility in the sense of Section~\ref{subsec:admissible meshes} in combination with  the following lemma. 
%For a hierarchical mesh $\widehat\TT_\coarse$ in the parameter domain and $\widehat T\in\widehat\TT_\coarse$, we define the patch elements via $\Pi_\coarse(\widehat T):=\set{\gamma^{-1}(T')}{T'\in\Pi(\gamma(\widehat T))}$.}
\begin{lemma}\label{lem:patch2neigbor}
Let $\widehat\TT_\coarse$ be an arbitrary hierarchical mesh in the parameter domain.
%For $\widehat T\in\widehat\TT_\coarse$, we define its patch in the parameter domain similarly as in ?
Then, 
%
%For the patch $\pi_\coarse(T)$ of an element $T\in\TT_\coarse$, there obviously holds ?
\begin{align}\label{eq:patch2neigbor}
\Pi_\coarse(\widehat T)\subseteq \NN_\coarse(\widehat T)\quad\text{for all } \widehat T\in\widehat\TT_\coarse.
\end{align}
\end{lemma}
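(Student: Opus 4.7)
Take $\widehat T' \in \Pi_\coarse(\widehat T)$; by definition this is an element of $\widehat\TT_\coarse$ with $\widehat T \cap \widehat T' \neq \emptyset$. The plan is to exhibit some $\widehat\beta \in \widehat\HH_\coarse$ with $\widehat T \cup \widehat T' \subseteq \supp(\widehat\beta)$, which by \eqref{eq:neigbors} yields $\widehat T' \in \NN_\coarse(\widehat T)$. Without loss of generality I assume $k := \level(\widehat T) \leq \level(\widehat T')$, and for each $j \in \{0,\ldots,k\}$ I denote by $\widehat T^{(j)}, \widehat T'^{(j)} \in \widehat\TT^j$ the unique level-$j$ ancestors of $\widehat T, \widehat T'$. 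Since $\widehat T \cap \widehat T' \neq \emptyset$, the ancestors also satisfy $\widehat T^{(j)} \cap \widehat T'^{(j)} \neq \emptyset$, so their tensor indices (call them $(a_1,\ldots,a_d)$ and $(b_1,\ldots,b_d)$) differ by at most one in each coordinate direction.

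First I would observe that at every level $j \in \{0,\ldots,k\}$ the tensor-product structure \eqref{eq:B-spline} and the assumption $p_i \geq 1$ provide some $\widehat\beta^j \in \widehat\BB^j$ with $\widehat T^{(j)} \cup \widehat T'^{(j)} \subseteq \supp(\widehat\beta^j)$: in each coordinate the admissible B-spline indices $J_i$ must satisfy $\max(a_i,b_i) - p_i \leq J_i \leq \min(a_i,b_i)$, and this range is non-empty since $|a_i - b_i| \leq 1 \leq p_i$. In particular, $\widehat T \cup \widehat T' \subseteq \supp(\widehat\beta^j)$.

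Next I would choose the level optimally by setting
\begin{align*}
 j^{\star} := \max\set{j \in \{0,\ldots,k\}}{\exists\,\widehat\beta^j \in \widehat\BB^j \text{ with } \widehat T \cup \widehat T' \subseteq \supp(\widehat\beta^j) \subseteq \widehat\Omega_\coarse^j}
\end{align*}
and fixing a corresponding witness $\widehat\beta^{j^{\star}}$. The maximum is attained because $\widehat\Omega_\coarse^0 = \overline{\widehat\Omega}$ makes the condition trivially true for $j=0$. Via the characterization \eqref{eq:short cHH} of $\widehat\HH_\coarse$, it then suffices to verify $\supp(\widehat\beta^{j^{\star}}) \not\subseteq \widehat\Omega_\coarse^{j^{\star}+1}$ in order to conclude $\widehat\beta^{j^{\star}} \in \widehat\HH_\coarse$ and finish the proof. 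I would rule out the opposite inclusion by cases: if $j^{\star} = k$, then $\supp(\widehat\beta^{j^{\star}}) \subseteq \widehat\Omega_\coarse^{k+1}$ combined with $\widehat T \subseteq \supp(\widehat\beta^{j^{\star}})$ would force $\widehat T \subseteq \widehat\Omega_\coarse^{k+1}$, contradicting $\level(\widehat T)=k$; if $j^{\star} < k$, the two-scale relation \eqref{eq:twoscale1}--\eqref{eq:twoscale} represents $\widehat\beta^{j^{\star}}$ as a non-negative convex combination of those $\widehat\beta^{j^{\star}+1} \in \widehat\BB^{j^{\star}+1}$ with $\supp(\widehat\beta^{j^{\star}+1}) \subseteq \supp(\widehat\beta^{j^{\star}})$, and inside the $2(p_1+1)\times\cdots\times 2(p_d+1)$ block of level-$(j^{\star}+1)$ cells forming $\supp(\widehat\beta^{j^{\star}})$ the same index computation as in the first step produces some $\widehat\beta^{j^{\star}+1}$ whose support contains both level-$(j^{\star}+1)$ ancestors, yielding $\widehat T \cup \widehat T' \subseteq \supp(\widehat\beta^{j^{\star}+1}) \subseteq \supp(\widehat\beta^{j^{\star}}) \subseteq \widehat\Omega_\coarse^{j^{\star}+1}$ in contradiction with the maximality of $j^{\star}$.

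The hardest part is the second index computation at level $j^{\star}+1$: one must verify that, after bisection, the refined version of $\max(a_i,b_i) - p_i \leq J_i \leq \min(a_i,b_i)$ still has a solution that simultaneously keeps the $(p_1+1)\times\cdots\times(p_d+1)$ block $\supp(\widehat\beta^{j^{\star}+1})$ inside the enlarged $2(p_1+1)\times\cdots\times 2(p_d+1)$ window $\supp(\widehat\beta^{j^{\star}})$. This is feasible because the doubling in each coordinate leaves $p_i+2$ available shifts of the finer B-spline while the ancestors remain face-adjacent (the inequality $|a_i - b_i| \leq 1 \leq p_i$ persists under refinement). Everything else is routine bookkeeping with levels, supports, and the nested partition $\widehat\Omega_\coarse^{k+1} \subseteq \widehat\Omega_\coarse^k$.
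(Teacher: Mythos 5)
Your proof is correct, but it is organized differently from the paper's. The paper starts at level $k=\level(\widehat T)$ with a B-spline $\widehat\beta^k\in\widehat\BB^k$ covering both elements and then \emph{descends}: if $\widehat\beta^k\notin\widehat\HH_\coarse$, the failure mode $\supp(\widehat\beta^k)\subseteq\widehat\Omega_\coarse^{k+1}$ is excluded by $\widehat T\not\subseteq\widehat\Omega_\coarse^{k+1}$, so necessarily $\supp(\widehat\beta^k)\not\subseteq\widehat\Omega_\coarse^{k}$; one then passes to a coarser $\widehat\beta^{k-1}$ with $\supp(\widehat\beta^{k})\subseteq\supp(\widehat\beta^{k-1})$ and propagates the invariant ``support sticks out of $\widehat\Omega_\coarse^{j+1}$'' downwards until \eqref{eq:short cHH} is satisfied, which must happen by level $0$. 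You instead run an extremal argument \emph{upwards}: you define $j^\star$ as the largest level at which a covering B-spline fits inside $\widehat\Omega_\coarse^{j}$ and derive $\supp(\widehat\beta^{j^\star})\not\subseteq\widehat\Omega_\coarse^{j^\star+1}$ from maximality. The paper's descent gets the second half of \eqref{eq:short cHH} for free from the nested supports and only ever needs the (standard) fact that every $\widehat\BB^{j}$-support is contained in some $\widehat\BB^{j-1}$-support; your ascent instead requires the refinement step you correctly single out as the hardest part, namely that a level-$(j^\star+1)$ B-spline can be chosen with support containing both level-$(j^\star+1)$ ancestors \emph{and} contained in $\supp(\widehat\beta^{j^\star})$. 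Your counting argument for this is sound (a block of at most $p_i+1$ consecutive fine knot spans containing both intervals can always be slid inside the window of up to $2(p_i+1)$ fine spans), so the proof closes; it is simply a little more work than the paper's route.

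One small repair: your claim that the tensor indices of touching cells ``differ by at most one in each coordinate direction'' is only true for simple interior knots. Since $\widehat\KK^0$ allows interior multiplicities up to $p_i$, two nontrivial cells of $\widehat\TT^j$ that touch can have indices differing by up to $p_i$ (the intervening knot spans being degenerate). This does not break your argument, because the only thing you use is nonemptiness of the index range $[\max(a_i,b_i)-p_i,\min(a_i,b_i)]$, which requires exactly $|a_i-b_i|\le p_i$ — the bound guaranteed by the multiplicity restriction and the same fact the paper invokes (``since all knot multiplicities are smaller than $p+1$''). The statement ``persists under refinement'' should likewise be read as $|a_i-b_i|\le p_i$ persisting, since the multiplicity of an existing knot is unchanged by dyadic refinement.
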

\begin{proof}
Let $\widehat T'\in\Pi_\coarse(\widehat T)$, i.e., $\widehat T'\in \widehat \TT_\coarse$ with $\widehat T\cap \widehat T'\neq\emptyset$. 
We abbreviate $k:=\level(\widehat T)$. % as well as $k':=\level(T')$.
Since all knot multiplicities are smaller that $p+1$, there exists $\widehat \beta^k\in\widehat \BB^k$ such that $|\widehat T\cap\supp(\widehat \beta^k)|\neq 0\neq|\widehat T'\cap\supp(\widehat \beta^k)|$.
If $\widehat \beta^k\in\widehat \HH_\coarse$, then $\widehat T'\in\widehat \NN_\coarse(\widehat T)$.
If $\widehat \beta^k\not\in\widehat \HH_\coarse$, the characterization  \eqref{eq:short cHH} shows that  $\supp(\widehat \beta^k)\not\subseteq\widehat \Omega^k_\coarse$ or $\supp(\widehat \beta^k)\subseteq\widehat \Omega^{k+1}_\coarse$.
By choice of $k$, it  holds that $\widehat T\subseteq\supp(\widehat\beta^k)$.
In view of \eqref{eq:parameter mesh}, $\widehat T\in\widehat\TT_\coarse$ implies $\widehat T\not \subseteq \widehat\Omega_\coarse^{k+1}$.
Hence, $\supp(\widehat\beta^k)\not \subseteq \widehat\Omega_\coarse^k$ and, in particular, $k>0$.
%Since $\supp(\widehat \beta^k)$ must contain $\widehat T$ which satisfies by the characterization \eqref{eq:parameter mesh} that $\widehat T\not\subseteq \widehat \Omega_\coarse^{k+1}$, the second case $\supp(\widehat \beta^k)\subseteq\widehat \Omega^{k+1}_\coarse$ is not possible.
%Since $\widehat \Omega^0_\coarse=\overline{\widehat \Omega}$, the first case $\supp(\widehat \beta^k)\not\subseteq\widehat \Omega^k_\coarse$  especially implies $k>0$.
Next, there exists $\widehat \beta^{k-1}\in\widehat \BB^{k-1}$ such that $\supp(\widehat \beta^{k})\subseteq\supp(\widehat \beta^{k-1})$.
If $\widehat \beta^{k-1}\in\widehat \HH_\coarse$, then $\widehat T'\in\widehat \NN_\coarse(\widehat T)$.
If $\widehat \beta^{k-1}\not\in\widehat \HH_\coarse$, there holds again either $\supp(\widehat \beta^{k-1})\not\subseteq\widehat \Omega^{k-1}_\coarse$ or  $\supp(\widehat \beta^{k-1})\subseteq\widehat \Omega^{k}_\coarse$. 
Due to $\supp(\widehat \beta^k)\not\subseteq\widehat \Omega^k_\coarse$, the second case is not possible.
Hence, $\supp(\widehat\beta^{k-1})\not \subseteq \widehat\Omega_\coarse^{k-1}$ and, in particular, $k-1>0$.
We proceed in the same way to get  a sequence $\widehat \beta^k,\dots,\widehat \beta^K$ with $\widehat \beta^j\in\widehat \BB^j$ and $\supp(\widehat \beta^K)\supseteq\dots\supseteq\supp(\widehat \beta^k)$, where $\widehat \beta^K\in\widehat\HH_\coarse$ for some $K\ge 0$.
%note that \eqref{eq:0 contained} implies that $\YY_\coarse$ contains all constant functions.
%This, together with \eqref{eq:supp elements} shows that for all elements $T,T'\in\TT_\coarse$ with $T\cap T'\neq\emptyset$, there exists $\beta\in\HH_\coarse$ with $T,T'\subseteq \supp(\beta)$. Hence, we have
\end{proof}

We define the patches $\pi_\coarse(\cdot)$ and $\Pi_\coarse(\cdot)$ in the parameter domain analogously to the patches  in the physical domain, see Section \ref{subsec:general mesh}.

With Lemma~\ref{lem:patch2neigbor}, one can easily verify that $\T$ satisfies \eqref{M:shape}--\eqref{M:trace}: 
Let $\TT_\coarse\in\T$.
We start with \eqref{M:shape}.  Let $T\in\TT_\coarse$ and $T'\in\Pi_\coarse(T)$.
Lemma \ref{lem:patch2neigbor} and admissibility show for the corresponding elements $\widehat T,\widehat T'$ in the parameter domain that $|\level(\widehat T)-\level(\widehat T')|\le 1$, wherefore $|\widehat T|\simeq|\widehat T'|$.
Regularity \eqref{eq:Cgamma} of the transformation $\gamma$ finally yields $|T|\simeq| T'|$.
The constant $\Cshape$ depends only on $d$, $\C{\gamma}$, and $\TT_0$.
%where we interpret $\pi_\coarse(T)$ as a subset of $\TT_\coarse$.

To prove \eqref{M:patch}, let $T\in\TT_\coarse$ and $T'\in\Pi_\coarse(T)$.
As before, we have  $|\level(\widehat T)-\level(\widehat T')|\le 1$ for the corresponding elements in the parameter domain.
%If we consider the problem in the parameter domain, we easily see that $\#\Pi_\coarse(T)\le 2$ for $d=1$, $\#\Pi_\coarse(T)\le4+4\cdot2$ for $d=2$ and  $\#\Pi_\coarse(T)\le8+8\cdot2+6\cdot 4$ for $d=3$.
With this, one  easily sees that  $\#\Pi_\coarse(T)\le  \Cpatch$ with a constant $\Cpatch>0$ that depends only on the dimension $d$.
% the result  immediately transfers to the physical domain $\Omega$.

Regularity \eqref{eq:Cgamma} of $\gamma$ shows that it is sufficient to prove \eqref{M:trace} for hyperrectangles $\widehat T$ in the parameter domain.
There, the trace inequality \eqref{M:trace} is well-known; see, e.g., \cite[Satz~3.4.5]{erath}.
The constant $\Ctrace$ depends only on  $d$, $\C{\gamma}$, and $\TT_0$.

\subsection{Verification of  (\ref{R:sons})--(\ref{R:reduction})}
Let $\TT_\coarse\in\T$, $\TT_\circ\in\refine(\TT_\coarse)$, and $T\in\TT_\coarse$.
 \eqref{R:sons} is trivially satisfied with $\Cson=2^d$, since each refined element is split into exactly $2^d$ elements.
Moreover, the union of sons property \eqref{R:union} holds by definition.

To see the reduction property \eqref{R:reduction}, let $T'\in\TT_\circ$ with $T'\subsetneqq T$. 
Since each refined element is split it into $2^d$ elements, we have for the corresponding elements in the parameter domain $|\widehat T'|\le 2^{-d}|\widehat T|$.
%There holds
%\begin{align*}
%|T'|=\int_{\widehat T'} |\det D\gamma(t)| dt 
%\end{align*}
%By the properties of the transformation $\gamma:\overline{\widehat\Omega}\to\overline\Omega$, this already implies $|T'|\le q_{\rm son}|T|$, where $q_{\rm son}$ depends only on $\gamma$ and $d$?
Next, we prove $|T'|\le q_{\rm son}|T|$ with a constant $0<q_{\rm son}<1$ which depends only on $d$ and $C_\gamma$.
Indeed, we even prove for arbitrary measurable sets $\widehat S'\subseteq \widehat S\subseteq\overline{\widehat\Omega}$ and  $S:=\gamma(\widehat S)$, $S':=\gamma(\widehat S')$ that $0<|\widehat S'|\le 2^{-d} |\widehat S|$ implies $|S'|\le q_{\rm son} |S|$.
To see this, we argue by contradiction and assume that  there are two sequences of such sets $(\widehat S_n)_{n\in\N}$ and $(\widehat S_n')_{n\in\N}$ with $|S_n'|/|S_n|\to 1$.
This implies $|S_n\setminus S_n'|/|S_n|\to 0$ and yields the contradiction
\begin{align*}
 1-2^{-d}\le \frac{|\widehat S_n\setminus\widehat S_n'|}{|\widehat S_n|}\simeq\frac{\int_{\widehat S_n\setminus \widehat S_n'}{|\det D \gamma(t)|}\,dt}{\int_{\widehat S_n}{|\det D \gamma(t)|}\,dt}= \frac{|S_n\setminus S_n'|}{|S_n|}\stackrel{n\to\infty}{\longrightarrow}0.
\end{align*}

\subsection{Verification of (\ref{R:closure})}\label{subsec:R3}
The proof of the closure estimate \eqref{R:closure} goes back to the seminal works \cite{bdd,stevenson08}.
Our analysis builds on  \cite[Section 3]{morgenstern} which proves \eqref{R:closure} for the refinement strategy of \cite{bg}; see also Remark~\ref{rem:morgenstern}.
%However, the original idea goes back to \cite{stevenson08}.
The following auxiliary result  states that $\refine(\cdot,\cdot)$ is equivalent to iterative refinement of one single  element.
For a mesh in the parameter domain  $\widehat \TT_\coarse\in\widehat\T$ and an arbitrary set $\widehat\MM_\coarse$,  we define $\refine (\widehat\TT_\coarse,\widehat\MM_\coarse):=\refine(\widehat\TT_\coarse,\widehat\MM_\coarse\cap\widehat\TT_\coarse)$ and  note that $\refine(\widehat\TT_\coarse,\emptyset)=\widehat\TT_\coarse$.

\begin{lemma}\label{lem:refine commutes}
Let $\widehat\TT_\coarse\in\widehat\T$ and $\widehat\MM_\coarse=\{\widehat T_1,\dots, \widehat T_n\}\subseteq\widehat\TT_\coarse$.
Then, it holds that
\begin{align}\label{eq:iterative refinement}
\refine(\widehat\TT_\coarse,\widehat\MM_\coarse)=\refine(\refine(\dots\refine(\widehat\TT_\coarse,\{\widehat T_1\})\dots, \{\widehat T_{n-1}\}),\{\widehat T_{n}\}).
\end{align}
\end{lemma}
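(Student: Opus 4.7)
The plan is to argue by induction on $n$. The base case $n=1$ is immediate from the convention. The engine of the induction is an intrinsic characterization of $\refine(\widehat\TT_\coarse, \widehat\MM_\coarse)$ that does not refer to the algorithm itself: I claim it is the \emph{coarsest} admissible refinement of $\widehat\TT_\coarse$ in which every $\widehat T \in \widehat\MM_\coarse$ has been bisected, meaning that any admissible $\widehat\TT_\star \in \widehat\T$ finer than $\widehat\TT_\coarse$ with $\widehat T \notin \widehat\TT_\star$ for all $\widehat T \in \widehat\MM_\coarse$ is automatically finer than $\refine(\widehat\TT_\coarse, \widehat\MM_\coarse)$.

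To establish this characterization, I would fix such a $\widehat\TT_\star$ and prove by induction on the iteration index $i$ of Algorithm~\ref{alg:refinement} that $\widehat T \notin \widehat\TT_\star$ holds for every $\widehat T \in \widehat\MM_\coarse^{(i)}$; the desired fineness of $\widehat\TT_\star$ over $\refine(\widehat\TT_\coarse, \widehat\MM_\coarse)$ then follows by inspection of~\eqref{eq:bisection}. The base $i=0$ is the hypothesis. For the step, suppose $\widehat T \in \widehat\MM_\coarse^{(i)}$ is bisected in $\widehat\TT_\star$ and let $\widehat T' \in \NN_\coarse^{\rm bad}(\widehat T)$, so that some $\widehat\beta \in \widehat\HH_\coarse$ contains both $\widehat T$ and $\widehat T'$ in its support. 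By~\eqref{eq:level beta is} and admissibility of $\widehat\TT_\coarse$, one has $\level(\widehat\beta) = \level(\widehat T')$. If $\widehat T' \in \widehat\TT_\star$, then~\eqref{eq:short cHH} forces $\widehat\beta \in \widehat\HH_\star$, and a child of $\widehat T$ in $\widehat\TT_\star$ (at level $\ge \level(\widehat T') + 2$) together with $\widehat T'$ would contradict admissibility of $\widehat\TT_\star$. Hence $\widehat T' \notin \widehat\TT_\star$, which advances the induction.

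Granted the characterization, the induction step for~\eqref{eq:iterative refinement} is routine. Let $\widehat\TT_\star$ denote the left-hand side and $\widehat\TT_{(n)}$ the right-hand side. Both are admissible (by Step~1 of the proof of Proposition~\ref{prop:refineT subset T}) and bisect each of $\widehat T_1, \dots, \widehat T_n$: the LHS by construction, the RHS because once an element is bisected in some iterate it cannot reappear in a subsequent finer mesh. Minimality applied to $\widehat\TT_\star$ yields $\widehat\TT_{(n)}$ finer than $\widehat\TT_\star$. Conversely, the inductive hypothesis identifies $\widehat\TT_{(n-1)}$ with $\refine(\widehat\TT_\coarse, \{\widehat T_1, \dots, \widehat T_{n-1}\})$, so minimality places $\widehat\TT_\star$ finer than $\widehat\TT_{(n-1)}$; a further application of minimality to the single-element refinement $\refine(\widehat\TT_{(n-1)},\{\widehat T_n\})$—with the convention $\refine(\widehat\TT_{(n-1)},\emptyset) = \widehat\TT_{(n-1)}$ when $\widehat T_n \notin \widehat\TT_{(n-1)}$—yields $\widehat\TT_\star$ finer than $\widehat\TT_{(n)}$, and equality follows.

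I expect the main obstacle to be the bad-neighbor propagation step in the minimality argument: showing that the witnessing $\widehat\beta \in \widehat\HH_\coarse$ persists as an element of $\widehat\HH_\star$. The delicate point is the synthesis of the level identity $\level(\widehat\beta) = \level(\widehat T')$ with the observation that $\widehat T' \in \widehat\TT_\star$ precisely encodes $\widehat T' \not\subseteq \widehat\Omega_\star^{\level(\widehat\beta)+1}$, so that both defining conditions of~\eqref{eq:short cHH} are met and the contradiction via admissibility of $\widehat\TT_\star$ can be drawn.
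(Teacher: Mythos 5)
Your proof is correct, but it is organized quite differently from the paper's. The paper proves \eqref{eq:iterative refinement} by peeling off $\widehat T_1$ and comparing, set by set, the closures produced by Algorithm~\ref{alg:refinement}: it introduces the three marked-element sequences $\widehat\MM_{(0)}^{(i)}$, $\widehat\MM_{(1)}^{(i)}$, $\widetilde\MM_{(1)}^{(i)}$, shows that all elements marked in the second pass still belong to $\widehat\TT_\coarse$ (Step~1), that bad neighbors computed with respect to $\refine(\widehat\TT_\coarse,\{\widehat T_1\})$ and with respect to $\widehat\TT_\coarse$ agree modulo the already-refined set $\widehat\MM_{(0)}^{(\rm end)}$ (Step~2), and then concludes $\widehat\MM_{(0)}^{(\rm end)}\cup\widehat\MM_{(1)}^{(\rm end)}=\widehat\MM_{(0)}^{(\rm end)}\cup\widetilde\MM_{(1)}^{(\rm end)}$ by induction on $i$ (Step~3). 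You instead establish a universal property --- $\refine(\widehat\TT_\coarse,\widehat\MM_\coarse)$ is the coarsest admissible mesh finer than $\widehat\TT_\coarse$ in which all of $\widehat\MM_\coarse$ is bisected --- and obtain commutativity as a formal consequence; your induction over the iteration index $i$, using \eqref{eq:level beta is} to pin down $\level(\widehat\beta)=\level(\widehat T')$, persistence of $\widehat\beta$ in $\widehat\HH_\star$ via \eqref{eq:short cHH}, and a contradiction with admissibility of $\widehat\TT_\star$, is sound and is in fact the multi-element generalization of Step~3 of the paper's Proposition~\ref{prop:refineT subset T} (which only treats $\widehat\MM_\coarse=\{\widehat T\}$), so there is no circularity in citing Step~1 of that proposition for admissibility of the output. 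Your route is arguably cleaner and yields the minimality characterization as a by-product of independent interest (it immediately gives, e.g., that $\refine(\widehat\TT_\coarse,\widehat\MM_\coarse)$ is contained in every admissible refinement bisecting the marked elements); the paper's bookkeeping argument buys the slightly finer information that the two closure computations mark literally the same elements, not merely that the resulting meshes coincide. One small point to make explicit when writing this up: in the final step, when $\widehat T_n\notin\widehat\TT_{(n-1)}$ you need the convention $\refine(\widehat\TT_{(n-1)},\{\widehat T_n\})=\refine(\widehat\TT_{(n-1)},\emptyset)=\widehat\TT_{(n-1)}$, which the paper does state immediately before the lemma, so your appeal to it is legitimate.
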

\begin{proof}We only show that  $\refine(\widehat\TT_\coarse,\widehat\MM_\coarse)=\refine(\refine(\widehat\TT_\coarse,\{\widehat T_1\}),\widehat\MM_\coarse\setminus\{\widehat T_1\})$, and  then \eqref{eq:iterative refinement} follows by induction.
We define 
\begin{align*}
\widehat\TT_{(1)}:=\refine(\widehat\TT_\coarse,\{\widehat T_1\}),\quad \widehat\TT_{(2)}:=\refine(\widehat\TT_{(1)},\widehat\MM_\coarse\setminus\{\widehat T_1\}),\\
\widehat\MM_{(0)}:=\widehat\MM_{(0)}^{(0)}:=\{\widehat T_1\},\quad\widehat\MM_{(1)}:=\widehat\MM_{(1)}^{(0)}:=\widetilde\MM_{(1)}:=\widetilde\MM_{(1)}^{(0)}:=\widehat\MM_\coarse\setminus\{\widehat T_1\}.
\end{align*}
For $i\in\N_0$, we introduce the following notation which is conform with that of Algorithm~\ref{alg:refinement}:
\begin{align*}
\widehat\MM_{(0)}^{(i+1)}&:=\widehat\MM_{(0)}^{(i)}\cup \bigcup_{\widehat T\in \widehat\MM_{(0)}^{(i)}}%\set{T'\in\widehat\TT_\coarse}{T'\in
\NN^{\rm bad}_\coarse(\widehat T),\quad
\widehat\MM_{(1)}^{(i+1)}:=\widehat\MM_{(1)}^{(i)}\cup\bigcup_{T_\in\widehat\MM_{(1)}^{(i)}}%\set{T'\in\widehat\TT_{(1)}}{T'\in
\NN^{\rm bad}_{(1)}(\widehat T),\\\widetilde{\MM}_{(1)}^{(i+1)}&:=\widetilde\MM_{(1)}^{(i)}\cup\bigcup_{\widehat T\in \widetilde\MM_{(1)}^{(i)}}
%\set{T'\in\widehat\TT_\coarse}{T'\in
\NN^{\rm bad}_\coarse(\widehat T).
\end{align*}
Finally, we set
\begin{align*}
\widehat\MM_{(0)}^{\rm (end)}:=\bigcup_{i\in\N_0}\widehat\MM_{(0)}^{(i)}, \quad\widehat\MM_{(1)}^{(\rm end)}:=\bigcup_{i\in\N_0}\widehat\MM_{(1)}^{(i)},\quad\widetilde\MM_{(1)}^{(\rm end)}:=\bigcup_{i\in\N_0}\widetilde\MM_{(1)}^{(i)}.
\end{align*}
With these notations, we have
\begin{align*}
\widehat\TT_\coarse\setminus\widehat\TT_{(1)}=\widehat\MM_{(0)}^{(\rm end)},\quad\widehat\TT_{(1)}\setminus\widehat\TT_{(2)}=\widehat\MM_{(1)}^{(\rm end)},\\
\widehat\TT_\coarse\setminus\refine(\widehat\TT_{(1)},\widehat\MM_\coarse\setminus\{\widehat T_1\})=\widehat\MM_{(0)}^{(\rm end)}\cup\widetilde\MM_{(1)}^{(\rm end)}.
\end{align*}
To conclude the proof, we will  prove that  $\widehat\MM_{(0)}^{(\rm end)}\cup\widehat\MM_{(1)}^{(\rm end)}=\widehat\MM_{(0)}^{(\rm end)}\cup\widetilde\MM_{(1)}^{(\rm end)}$
To this end, we split the proof into three steps.

\textbf{Step 1:}
We first prove $\widehat\MM_{(1)}^{(\rm end)}\subseteq\widehat\TT_\coarse$ by induction.
Clearly, we have $\widehat\MM_{(1)}^{(0)}\subseteq\widehat\TT_\coarse$.
Now, let $i\in\N_0$ and suppose $\widehat\MM_{(1)}^{(i)}\subseteq\widehat\TT_\coarse$.
%\begin{align*}%\label{eq:M1ip1}
%\widehat\MM_{(1)}^{(i+1)}=\widehat\MM_{(1)}^{(i)}\cup\bigcup_{\widehat T\in \widehat\MM_{(1)}^{(i)}}
%\set{\widehat T'\in\widehat\TT_{(1)}}{\widehat T'\in
%\NN^{\rm bad}_{(1)}(\widehat T).
%\end{align*}
To see $\widehat\MM_{(1)}^{(i+1)}\subseteq\widehat\TT_\coarse$, we argue by contradiction and assume that there exists $\widehat T\in\widehat\MM_{(1)}^{(i)}$ and $\widehat T'\in\NN_{(1)}^{\rm bad}(\widehat T)\setminus\widehat\TT_\coarse$.
By Lemma~\ref{lem:active supports shrink}, the  unique father element $\widehat T_\coarse'\in\widehat\TT_\coarse$ with $\widehat T'\subsetneqq \widehat T_\coarse'$ satisfies  $\widehat T_\coarse'\in\NN_\coarse(\widehat T)$.
Therefore, admissibility of $\widehat\TT_\coarse$ proves $|\level(\widehat T)-\level(\widehat T_\coarse')|\le 1$, which contradicts 
\begin{align*}\level(\widehat T_\coarse')=\level(\widehat T')-1=\level(\widehat T)-2.\end{align*}

 \textbf{Step 2:} 
Let $\widehat T\in \widehat\MM_{(1)}^{(\rm end)}$. In this step, we will prove that 
\begin{align}\label{eq:M1ip2}
\widehat\MM_0^{\rm (end)}\cup\NN^{\rm bad}_{(1)}(\widehat T)=\widehat\MM_{(0)}^{\rm (end)}\cup\NN^{\rm bad}_\coarse(\widehat T).
\end{align}
By Step 1, we have $\widehat T\in\widehat\TT_\coarse$.
Lemma~\ref{lem:active supports shrink}  proves $\NN^{\rm bad}_{(1)}(\widehat T)\cap\widehat\TT_\coarse\subseteq\NN^{\rm bad}_{\coarse}(\widehat T)$.
Using Step~1 again, we see $\NN_{(1)}^{\rm bad}(\widehat T)\subseteq\widehat\MM_{(1)}^{\rm (end)}\subseteq \widehat\TT_\coarse$ and conclude ``$\subseteq$" in \eqref{eq:M1ip2}.
To see ``$\supseteq$", let $\widehat T'\in \NN^{\rm bad}_\coarse(\widehat T)\setminus\widehat\MM_{(0)}^{\rm (end)}$.
Note that $\widehat T'\in\widehat\TT_\coarse\cap\widehat\TT_{(1)}$ since $\widehat\TT_\coarse\setminus\widehat\TT_{(1)}=\widehat\MM_{(0)}^{\rm (end)}.$
There exists $\widehat  \beta\in\widehat  \HH_\coarse$ with $\widehat T,\widehat T'\subseteq\supp(\widehat \beta)$.
By admissibility of  $\widehat\TT_\coarse\in\widehat\T$, $\level(\widehat T')=\level(\widehat T)-1$, and \eqref{eq:level beta is}, we see that $\level(\widehat \beta)=\level(\widehat T')=:k'$. 
Hence, \eqref{eq:short cHH} yields that $\supp(\widehat \beta)\subseteq\widehat \Omega_\coarse^{k'}$ as well as $\supp(\widehat  \beta)\not\subseteq\widehat \Omega_\coarse^{k'+1}$.
The definition of $k'$ and \eqref{eq:parameter mesh} show that  $\widehat T'\not\subseteq\widehat\Omega_{(1)}^{k'+1}$.
We conclude $\supp(\widehat \beta)\subseteq\widehat\Omega_\coarse^{k'}\subseteq\widehat \Omega_{(1)}^{k'}$ and $\supp(\widehat \beta)\not\subseteq\widehat \Omega_{(1)}^{k'+1}$, since $\widehat\TT_{(1)}\ni \widehat T'\subseteq\supp(\widehat \beta)$.
Therefore, \eqref{eq:short cHH}  shows $\widehat \beta\in\widehat \HH_{(1)}$.
Altogether, we have $\widehat T'\in\NN^{\rm bad}_{(1)}(\widehat T)$.

 \textbf{Step 3:} 
Finally, we prove $\widehat\MM_{(0)}^{(\rm end)}\cup\widehat\MM_{(1)}^{(i)}=\widehat\MM_{(0)}^{(\rm end)}\cup\widetilde\MM_{(1)}^{(i)}$ by induction on $i\in\N_0$.
In particular, this will imply $\widehat\MM_{(0)}^{(\rm end)}\cup\widehat\MM_{(1)}^{(\rm end)}=\widehat\MM_{(0)}^{(\rm end)}\cup\widetilde\MM_{(1)}^{(\rm end)}$.
For $i=0$,  the claim follows from $\widehat\MM_{(1)}^{(0)}=\widetilde\MM_{(1)}^{(0)}$.
By Step 2,
%and $\widehat\MM_{(0)}^{(\rm end)}=\widehat\TT_\coarse\setminus\widehat\TT_{(1)}$, 
the induction step works as follows:
\begin{align*}
\widehat\MM_{(0)}^{(\rm end)}\cup\widehat\MM_{(1)}^{(i+1)}&\,\,=\widehat\MM_{(0)}^{(\rm end)}\cup\widehat\MM_{(1)}^{(i)}\cup\bigcup_{\widehat T\in \widehat\MM_{(1)}^{(i)}}
%\set{\widehat T'\in\widehat\TT_\coarse\cap\widehat\TT_{(1)}}{\widehat T'\in
\NN^{\rm bad}_{(1)}(\widehat T)\\
&\stackrel{\eqref{eq:M1ip2}}{=}\widehat\MM_{(0)}^{(\rm end)}\cup\widehat\MM_{(1)}^{(i)}\cup\bigcup_{\widehat T\in \widehat\MM_{(0)}^{(\rm end)}\cup \widehat\MM_{(1)}^{(i)}}
%\set{\widehat T'\in\widehat\TT_\coarse}{\widehat T'\in
\NN^{\rm bad}_\coarse(\widehat T)\\
&\,\,=\widehat\MM_{(0)}^{(\rm end)}\cup\widetilde \MM_{(1)}^{(i)}\cup\bigcup_{\widehat T\in \widehat\MM_{(0)}^{(\rm end)}\cup\widetilde\MM_{(1)}^{(i)}}
%\set{\widehat T'\in\widehat\TT_\coarse}{\widehat T'\in
\NN^{\rm bad}_\coarse(\widehat T)\\
&\,\,=\widehat\MM_{(0)}^{(\rm end)}\cup\widetilde\MM_{(1)}^{(i+1)}.
\end{align*}
This concludes the proof.
\end{proof}

%Further, we need Lemma~\ref{lem:morgenstern} below for the proof of the closure estimate~\eqref{R:closure}.
Let $\widehat\TT_\coarse\in\widehat\T$. 
For $\widehat  T,\widehat T'\in\widehat\TT_\coarse$, let $\dist(\widehat T,\widehat T')$ be the Euclidean distance of their midpoints in the parameter domain.
%With
%\begin{align}
%p:=\max_{i=1,\dots,d} p_i,
%\end{align}
Let $\widehat T\in \widehat\TT_\coarse$ and  $\widehat T'\in\NN_\coarse(\widehat T)$.
Hence, there is $\widehat\beta\in\widehat\HH_\coarse$  such that $\widehat T,\widehat T'\subseteq\supp(\beta)$.
In particular, it holds that $\dist(\widehat T,\widehat T')\le\diam(\supp(\widehat\beta))$.
By admissibility  of $\widehat\TT_\coarse$ and \eqref{eq:level beta is}, we see $|\level(\widehat\beta)-\level(\widehat T)|\le 1$.
This proves
\begin{align}\label{eq:distineq}
\dist(\widehat T,\widehat T')\le \C{diam} 2^{-\level(\widehat T)},
\end{align}
where $\C{diam}>0$  depends only on $d$, ${\widehat\TT}_0$ and $(p_1,\dots,p_d)$.
With this observation, we can prove the following lemma.
The proof follows the lines of \cite[Lemma 11]{morgenstern}, but is also included here  for completeness.
\begin{lemma}\label{lem:morgenstern}
Let $\widehat\TT_\coarse\in\widehat\T$ and $\widehat T'\in\widehat\TT_\coarse$.
With $\widehat\TT_\circ=\refine(\widehat\TT_\coarse,\{\widehat T'\})$, it holds that 
\begin{align}
\dist(\widehat T,\widehat T')\le 2^{-\level(\widehat T)}\Cdist \quad \text{for all }\widehat T\in \widehat\TT_\circ\setminus\widehat\TT_\coarse, %\quad\text{with }\Cdist= C\sqrt{d}+16 \C{diam}\sqrt{d}p
\end{align}
where $\Cdist>0$ depends only on $d$, ${\widehat\TT}_0$ and $(p_1,\dots,p_d)$.
\end{lemma}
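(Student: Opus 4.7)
The plan is to trace any $\widehat T\in \widehat\TT_\circ\setminus\widehat\TT_\coarse$ back through the bad-neighbour iteration of Algorithm~\ref{alg:refinement}. Since $\widehat T$ arises by dyadic bisection, it has a unique father $\widehat T_\coarse\in\widehat\TT_\coarse$ lying in the final marked set, with $\level(\widehat T)=\level(\widehat T_\coarse)+1$. The decisive observation is that any element added on passing from $\widehat\MM_\coarse^{(i)}$ to $\widehat\MM_\coarse^{(i+1)}$ is, by definition of $\NN_\coarse^{\rm bad}$, a neighbour of a currently marked element lying exactly one level lower. Unwinding this step by step yields a chain $\widehat T'=\widehat T_{(0)},\widehat T_{(1)},\dots,\widehat T_{(J)}=\widehat T_\coarse$ in $\widehat\TT_\coarse$ with $\widehat T_{(i+1)}\in \NN_\coarse^{\rm bad}(\widehat T_{(i)})$ and $\level(\widehat T_{(i)})=\level(\widehat T')-i$; in particular, $J=\level(\widehat T')-\level(\widehat T)+1$.

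The conclusion then follows from the triangle inequality in the midpoint metric $\dist$. Applying the neighbour estimate~\eqref{eq:distineq} to each link with the higher-level element yields $\dist(\widehat T_{(i)},\widehat T_{(i+1)})\le \C{diam}\,2^{-\level(\widehat T_{(i)})}$, while $\widehat T\subseteq \widehat T_{(J)}$ forces $\dist(\widehat T,\widehat T_{(J)})\lesssim 2^{-\level(\widehat T_{(J)})}$. Summing produces a geometric series with ratio $2$ whose largest term lives at level $\level(\widehat T_{(J)})=\level(\widehat T)-1$, so the total is dominated by a constant multiple of $2^{-\level(\widehat T)}$. This gives the desired bound with $\Cdist$ depending only on $\C{diam}$, and hence only on $d$, $\widehat\TT_0$, and $(p_1,\dots,p_d)$.

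The main obstacle is justifying the ``exactly one level per step'' arithmetic, since Algorithm~\ref{alg:refinement} may introduce elements at several distinct levels simultaneously. The point is that each individual addition is by construction a bad neighbour of a currently marked element, so any single backward thread in this branching process drops by one level at each stage. A subtle complement is that all chain elements live in $\widehat\TT_\coarse$ rather than in $\widehat\TT_\circ$, which is precisely what is required for~\eqref{eq:distineq} to be applied on the admissible coarse mesh; this is built into the algorithm because step~(i) only inspects elements of $\widehat\TT_\coarse$ and the actual bisections are deferred to step~(ii). Once these two bookkeeping points are settled, the geometric summation follows the calculation of \cite[Lemma~11]{morgenstern} transported to our neighbour relation~\eqref{eq:neigbors} for non-truncated hierarchical B-splines.
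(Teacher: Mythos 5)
Your proof is correct and follows essentially the same route as the paper: the same backward chain of bad neighbours from the marked element to the father of $\widehat T$, the same one-level-per-link arithmetic, and the same triangle-inequality plus geometric-sum estimate based on~\eqref{eq:distineq}. The only cosmetic imprecision is in locating the largest term of the series (the largest \emph{link} term sits at level $\level(\widehat T)$, while the extra child-to-father term sits at level $\level(\widehat T)-1$), but this does not affect the bound $\dist(\widehat T,\widehat T')\lesssim 2^{-\level(\widehat T)}$.
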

\begin{proof}
$\widehat T\in \widehat\TT_\circ\setminus\widehat\TT_\coarse$ implies the existence of a sequence $\widehat T'=\widehat T_J, \widehat T_{J-1},\dots, \widehat T_0$ such that $\widehat T_{j-1}\in\NN^{\rm bad}_\coarse(\widehat T_j)$ and $\widehat T$ is a child of $\widehat T_0$, i.e., $\widehat T\subsetneqq\widehat  T_0$ and $\level(\widehat T)=\level(\widehat T_0)+1$.
Since $\level(\widehat T_{j-1})=\level(\widehat T_j)-1$, it follows 
\begin{align}\label{eq:level Tj}
\level(\widehat T_j)=\level(\widehat T_0)+j.
\end{align}
The triangle inequality proves 
\begin{align*}
\dist(\widehat T,\widehat T')\le \dist(\widehat T,\widehat T_0)+\dist(\widehat T_0,\widehat T')\le\dist(\widehat T,\widehat T_0) +\sum_{j=1}^J\dist(\widehat T_j,\widehat T_{j-1})
\end{align*}
Further, there exists a constant $C>0$ which depends only on $\widehat{\TT}_0$ and $d$, such that
\begin{align*}
\dist(\widehat T,\widehat T_0)\le C 2^{-\level(\widehat T)}.
\end{align*}
With \eqref{eq:distineq} and \eqref{eq:level Tj}, we see
\begin{align*}
\sum_{j=1}^J\dist(\widehat T_j,\widehat T_{j-1})&\stackrel{\eqref{eq:distineq}}{\le}\C{diam}\sum_{j=1}^J 2^{-\level(\widehat T_j)}\stackrel{\eqref{eq:level Tj}} =\C{diam}\sum_{j=1}^J 2^{-\level(\widehat T_0)-j} {\le} \C{diam}2^{-\level(\widehat T)-1},
\end{align*}
which concludes the proof.
\end{proof}

Finally, let $\widehat\TT_\coarse\in\widehat\T$ and $\widehat T\in \widehat\TT_\coarse$.
We abbreviate $\widehat\TT_\circ=\refine(\widehat\TT_\coarse,\{\widehat T\})$.
Then, there holds 
\begin{align}\label{eq:remark morgenstern}
\level(\widehat T')\le\level(\widehat T)+1\quad\text{for all refined elements }\widehat T'\in\widehat\TT_\circ\setminus\widehat\TT_\coarse.
\end{align}
To see this, note that all elements $\widehat T''\in\widehat\TT_\coarse\setminus\widehat\TT_\circ$ which are refined, satisfy $\widehat T''=\widehat T$ or $\level(\widehat T'')\le\level(\widehat T)-1$.
Therefore, their children satisfy $\level(\widehat T')\le\level(\widehat T)+1$.
With this last observation, we can argue as in the proof of \cite[Theorem 12]{morgenstern} to show the closure estimate \eqref{R:closure}.
The constant $\Cclos>0$ depends only on $d,\widehat\TT_0$, and $(p_1,\dots,p_d)$.
%\begin{theorem}
%Let $(\TT_{\ell})_{\ell\in\N_0}$ and $(\widehat\MM_{\ell})_{\ell\in\N_0}$ with $\widehat\MM_\ell\subseteq\TT_\ell$ and $\TT_{\ell+1}=\refine(\TT_\ell,\widehat\MM_\ell)$.
%Then, there holds for all $L\in\N_0$
%\begin{align}
%\#\TT_L-\#\TT_0\le \Cclos \sum_{\ell=0}^{L-1}\#\widehat\MM_{\ell},
%\end{align}
%where $\Cclos>0$ depends only on $d,\widehat\TT_0$ and $p$.
%%=\widetilde{C}_{0}(4C/\sqrt{d}+1)^d$ with $\Cdist>0$ of Lemma~\ref{lem:morgenstern} and some constant $\widetilde C>0$ only dependent on $\widehat{\TT}_0$.
%$\hfill\square$
%\end{theorem}

%\subsection{Overlay property (\ref{eq:M3})}
\subsection{Verification of (\ref{R:overlay})}
We prove  \eqref{R:overlay} in the parameter domain $\widehat\Omega$. 
%  prove the existence of an admissible overlay  $\TT_\fine\in\refine(\TT_\coarse)\cap\refine(\TT_\star)$ for admissible meshes 
  Let $\widehat \TT_\coarse,\widehat\TT_\star\in\widehat\T$ be two admissible hierarchical meshes.
%   which also satisfies the overlay estimate.
%\begin{proposition}
%For all $\TT_\coarse,\TT_\star\in\T$, there exists an overlay $\TT_\fine\in\refine(\TT_\coarse)\cap\refine(\TT_\star)$ 
%\begin{align}\label{eq:ovest}
%\#\TT_\fine\le \#\TT_\star-\#\TT_0+\#\TT_\coarse.
%\end{align}
%\end{proposition}
%\begin{proof}
We define the overlay
\begin{align}
\widehat\TT_{\fine}:=\set{\widehat T_\coarse\in\widehat\TT_\coarse}{\exists \widehat T_\star\in\widehat\TT_\star\quad \widehat T_\coarse\subseteq \widehat T_\star}\cup\set{\widehat T_\star\in\widehat\TT_\star}{\exists\widehat T_\coarse \in\widehat\TT_\coarse\quad\widehat T_\star\subseteq \widehat T_\coarse}.
\end{align}
Note that $\widehat\TT_\fine$ is a hierarchical mesh with hierarchical domains ${\widehat\Omega}_\fine^k={\widehat\Omega}_\coarse^k\cup \widehat\Omega_\star^k$ for $k\in\N_0$.
In particular, $\widehat\TT_\fine$  is finer than $\widehat\TT_\coarse$ and $\widehat\TT_\star$.
Moreover, the overlay estimate  easily follows from the definition of $\widehat\TT_\fine$.
It remains to prove that $\widehat\TT_\fine$ is admissible.
To see this, let $\widehat T,\widehat T'\in\widehat\TT_\fine$ with $\widehat T'\in\NN_\fine(\widehat T)$, i.e., there exists $\widehat\beta_\fine\in\widehat\HH_\fine$ such that $|\widehat T\cap \supp(\widehat\beta_\fine)|\neq 0\neq| \widehat T'\cap \supp(\widehat\beta_\fine)|$.
Without loss of generality, we suppose $\level(\widehat T)\ge\level(\widehat T')$ and $\widehat T\in\widehat\TT_\coarse$.
If $\widehat T'\in\widehat\TT_\coarse$, Lemma~\ref{lem:active supports shrink}  shows $\widehat T'\in\NN_\coarse(\widehat T)$, and admissibility of $\widehat\TT_\coarse$ implies $|\level(\widehat T)-\level(\widehat T')|\le 1$.
Now, let $\widehat T'\in\widehat\TT_\star$.
By definition of the overlay, there exists $\widehat T_\coarse'\in\widehat\TT_\coarse$ with $\widehat T'\subseteq \widehat T_\coarse'$ and $\level(\widehat T_\coarse')\le\level(\widehat T')$.
Further,  Lemma~\ref{lem:active supports shrink} provides some (not necessarily unique) $\widehat\beta_\coarse\in\widehat\HH_\coarse$ such that $\supp(\widehat\beta_\fine)\subseteq\supp(\widehat\beta_\coarse)$.
Hence, $|\widehat T\cap\supp(\widehat\beta_\coarse)|\neq0\neq |\widehat T_\coarse'\cap\supp(\widehat\beta_\coarse)|$, i.e., $\widehat T_\coarse'\in\NN_\coarse(\widehat T)$.
Since  $\widehat\TT_\coarse\in\widehat\T$, it follows that  $|\level(\widehat T)-\level(\widehat T_\coarse')|\le 1$.
Altogether, we see that  
\begin{align*}|\level(\widehat T)-\level(\widehat T')|=\level(\widehat T)-\level(\widehat T')&\le \level(\widehat T)-\level(\widehat T_\coarse')\\
&\le |\level(\widehat T)-\level(\widehat T_\coarse')|\le 1.
\end{align*}
This concludes the proof of \eqref{R:overlay}.
%\end{proof}

%\section{Numerical examples}\label{sec:numerics}

%\section{T-Splines satisfy Axioms} 
%\subsection{Definition}
% !TEX encoding = MacOSRoman
%!TEX root = igafem.tex

\subsection{Verification of (\ref{S:inverse})}\label{subsec:E1.1 true}
Let $T\in\TT_\coarse\in\T$.
Let $V_\coarse\in\XX_\coarse$.
Define $\widehat V_\coarse:=V_\coarse\circ\gamma\in\widehat\XX_\coarse\subseteq\widehat\YY_\coarse$ and $\widehat T:=\gamma^{-1}(T)\in\widehat\TT_\coarse$.
Regularity \eqref{eq:Cgamma} of $\gamma$ proves for $i\in\{0,1,2\}$ 
\begin{align}\label{eq:sobolev equivalent}
\norm{V_\bullet}{H^i(T)}%\gtrsim 
\simeq\norm{\widehat V_\bullet}{H^i(\widehat T)},
\end{align}
where the hidden constants depend only on $d$ and $C_\gamma$.
Since $\widehat V_\coarse$ is a $\widehat \TT_\coarse$-piecewise tensor polynomial, there holds for $i,j\in\{0,1,2\}$ with $j\le i$ that
\begin{align}\label{eq:parameter invest}
|\widehat T|^{(i-j)/d} \norm{\widehat V_\bullet}{H^i(\widehat T)}\lesssim \norm{\widehat V_\bullet}{H^j(\widehat T)},
\end{align}
where  the hidden constant  depends only on $d$, $\widehat\TT_0$, and $(p_1,\dots,p_d)$.
Together, \eqref{eq:sobolev equivalent}--\eqref{eq:parameter invest} conclude the proof of \eqref{S:inverse}, where $C_{\rm inv}$ depends only on $d$, $C_\gamma$, $\widehat\TT_0$,  and $(p_1,\dots,p_d)$.

\subsection{Verification of (\ref{S:nestedness})}\label{subsec:nestedness}
 In \eqref{eq:hierarchical nested}, we already saw that $\TT_\fine\in\refine(\TT_\coarse)$ with $\TT_\coarse\in\T$ implies nestedness of the corresponding ansatz spaces $\XX_\coarse\subseteq\XX_\fine$.

\subsection{Verification of (\ref{S:local})}\label{subsec:E4.2 true}
%We finish this section with the proof of (E4.2), where $k_{\rm proj}$.% is yet to be determined.
We show the assertion in the parameter domain.
For arbitrary but fixed $\kproj\in\N_0$ (which will be fixed later in Section~\ref{subsec:E4.1 true} to be $k_{\rm proj}:= 2(p+1)$), we  set $k_{\rm loc}:=k_{\rm proj}+2(p+1)$. 
Let $\widehat \TT_\bullet\in\widehat\T$,  $\widehat\TT_\circ\in\refine(\widehat\TT_\bullet)$, and $\widehat V_\circ\in\widehat\XX_\circ$.
We define the patch functions $\pi_\bullet$ and $\Pi_\bullet$ in the parameter domain analogously to the patch functions in the physical domain, see Section \ref{subsec:general mesh}.
Let $\widehat T\in\widehat\TT_\bullet\setminus\Pi_\bullet^{{\rm loc}}(\widehat\TT_\bullet\setminus\widehat\TT_\circ)$, where $\Pi_\bullet^{\rm loc}:=\Pi_\bullet^{k_{\rm loc}}$.
First, we show that
\begin{align}\label{eq:omegainv}
\Pi_\bullet^{{\rm loc}}(\widehat T)\subseteq \widehat\TT_\bullet\cap\widehat\TT_\circ.
\end{align}
To this end, we argue by contradiction and assume  that there exists $\widehat T'\in\Pi_\bullet^{{\rm loc}}(\widehat T)$ with $\widehat T'\not \in \widehat\TT_\bullet\cap\widehat\TT_\circ$.
This is equivalent to $\widehat T\in\Pi_\bullet^{{\rm loc}}(\widehat T')$ and  $\widehat T'\in\widehat\TT_\bullet\setminus\widehat\TT_\circ$.
This implies $\widehat T\in\Pi_\bullet^{{\rm loc}}(\widehat\TT_\bullet\setminus\widehat\TT_\circ)$,  contradicts $\widehat T\in\widehat\TT_\bullet\setminus\Pi_\bullet^{{\rm loc}}(\widehat\TT_\bullet\setminus\widehat\TT_\circ)$, and hence proves \eqref{eq:omegainv}.
%Now, let $\Pi^{\rm proj}_\bullet=\Pi^{k_{\rm proj}}_\bullet$ with an integer $k_{\rm proj}\in\N_0$ which will be fixed later.
Again, we abbreviate $\pi_\bullet^{\rm proj}:=\pi_\bullet^{k_{\rm proj}}$.
According to Corollary~\ref{cor:basis of X}, 
it holds that
%the spaces $\widehat \XX_\bullet|_{\pi_\bullet^{\rm proj}(T)}$ resp. $\widehat \XX_\circ|_{\pi_\bullet^{\rm proj}(T)}$  are spanned by the sets
\begin{align*}
\set{V_\coarse|_{\pi_\bullet^{\rm proj}(T)}}{V_\coarse\in \XX_\coarse}={\rm span}\set{\widehat\beta|_{\pi_\bullet^{\rm proj}(T)}}{\widehat \beta\in\widehat \HH_\bullet\wedge\widehat \beta|_{\partial\widehat \Omega}=0\wedge|\supp(\widehat \beta)\cap\pi_\bullet^{\rm proj}(\widehat T)|>0},
\end{align*}
as well as
\begin{align*}
\set{V_\fine|_{\pi_\bullet^{\rm proj}(T)}}{V_\fine\in \XX_\fine}={\rm span}\set{\widehat\beta|_{\pi_\bullet^{\rm proj}(T)}}{\widehat \beta\in\widehat \HH_\fine\wedge\widehat \beta|_{\partial\widehat \Omega}=0\wedge|\supp(\widehat \beta)\cap\pi_\bullet^{\rm proj}(\widehat T)|>0}.
\end{align*}
We will prove 
\begin{align}\label{eq:local ansatz prove}
\set{\widehat \beta\in\widehat \HH_\bullet}{|\supp(\widehat \beta)\cap\pi_\bullet^{\rm proj}(\widehat  T)|>0}=\set{\widehat \beta\in\widehat \HH_\circ}{|\supp(\widehat \beta)\cap\pi_\bullet^{\rm proj}(\widehat T)|>0},
\end{align}
which will conclude \eqref{S:local}.
%Let $\beta\in\HH_\bullet$ with 
First let $\widehat \beta$ be an element of the left set.
%By Proposition~\ref{prop:bounded number}, the support of $\widehat \beta$ consists of at most $2^d (p+1)^d$ elements $\widehat T'\in\widehat\TT_\bullet$.
%$|\supp(\beta)\cap\pi_\bullet^{\rm proj}(T)|>0$. 
%Admissibility of $\widehat\TT_\bullet$, \eqref{eq:supp elements} and \eqref{eq:level beta is} show that the support of $\beta$ consists of at most $2^d (p+1)^d$ elements $T'\in\widehat\TT_\bullet$.  
%Hence, $|\supp(\beta)\cap\pi_\bullet^{\rm proj}(T)|>0$ for $\beta\in\HH_\bullet$ implies 
%Similarly as in \eqref{eq:supp cap}, one sees 
By Remark~\ref{rem:connected}, this implies $\supp(\widehat \beta)\subseteq\pi_\bullet^{{\rm loc}}(\widehat T)$.
Together with \eqref{eq:omegainv}, we see $\supp(\widehat \beta)\subseteq \bigcup(\widehat\TT_\bullet\cap \widehat\TT_\circ)$.
This proves that no element within $\supp(\widehat \beta)$ is changed during refinement, i.e., $\widehat \Omega_\bullet^k\cap\supp(\widehat \beta)=\widehat \Omega_\circ^k\cap\supp(\widehat \beta)$ for all $k\in\N_0$.
Thus, \eqref{eq:short cHH}  proves $\widehat \beta\in\widehat \HH_\circ$.
The proof works the same if we start with some $\widehat \beta$ in the right set.
This proves \eqref{eq:local ansatz prove} and therefore \eqref{S:local}.

%\subsection{Verification of (E2.2)--(E2.3)}\label{subsec:E2 true}

\subsection{Truncated hierarchical B-splines}\label{subsec:trunc}
We will define some Scott-Zhang type operator $J_\bullet$ similarly as in \cite{speleers} with the help of so-called truncated hierarchical B-splines (THB-splines) introduced in \cite{juttler2}.
In this section, we recall their definition and list some basic properties.
For a more detailed  presentation, we refer to, e.g., \cite{juttler2,speleers}.

Let $\widehat\TT_\bullet$ be an arbitrary hierarchical mesh in the parameter domain.
For $k\in\N_0$, we define the truncation $\trunc_\coarse^{k+1}:\widehat\YY^k\to\widehat\YY^{k+1}$ as follows:
%For $\widehat V^k\in\widehat\YY^k\subset\widehat\YY^{k+1}$, let
\begin{align}
\trunc_\bullet^{k+1}(\widehat V^k):=\sum_{\widehat \beta\in\widehat\BB^{k+1}\atop \supp(\widehat\beta)\not\subseteq\widehat\Omega_\bullet^{k+1}} c_{\widehat\beta} \widehat\beta
\quad\text{for all }\widehat V^k=\sum_{\widehat\beta\in\widehat\BB^{k+1}} c_{\widehat\beta}\widehat\beta\in\widehat\YY^k\subset\widehat\YY^{k+1},
\end{align}
i.e., truncation is defined via the (unique) basis representation of $\widehat V^k\in\widehat\YY^k$ with respect $\widehat\BB^{k+1}$.
%First, we define for $k\in\N_0$ and the spline $\widehat V^k \in\widehat \YY^k\subset\widehat\YY^{k+1}$ 
%with representation in $\widehat\YY^{k+1}$
%\begin{align}
%\widehat V^k=\sum_{\widehat\beta\in\widehat\BB^{k+1}} c_{\widehat\beta}\widehat\beta,
%\end{align}
%where $c_{\widehat\beta}\in\R$,
%its truncation of level $k+1$ as 
%\begin{align}
%\trunc_\bullet^{k+1}(\widehat V^k):=\sum_{\widehat \beta\in\widehat\BB^{k+1}\atop \supp(\widehat\beta)\not\subseteq\widehat\Omega_\bullet^{k+1}} c_{\widehat\beta} \widehat\beta.
%\end{align}
Recall that $M_\bullet\in\N$ is the minimal integer such that $\widehat\Omega_\bullet^{M_\bullet}=\emptyset$.
%Now, let $\TT_\bullet\in\T$ and $\widehat\beta\in\widehat\HH_\bullet$.
For all  $\widehat\beta\in\widehat\HH_\bullet$, the corresponding truncated hierarchical B-spline reads
\begin{align}
\Trunc_\bullet(\widehat\beta):=\trunc_\bullet^{M_\bullet-1}\Big(\trunc_\bullet^{M_\bullet-2}\Big(\dots\Big(\trunc_\bullet^{\level(\widehat\beta)+1}(\widehat \beta)\Big)\dots\Big)\Big),
\end{align}
As the set $\widehat\HH_\bullet$, the set of  THB-splines $\set{\Trunc_\bullet(\widehat\beta)}{\widehat\beta\in\widehat\HH_\bullet}$ forms a basis of the space of hierarchical splines $\widehat\YY_\bullet$.
%The two-scale relation with only nonnegative coefficients between basis functions of consecuitive levels of 
In Section \ref{subsec:splines}, we mentioned that each basis function in $\widehat\BB^k$ is the linear combination of basis functions $\widehat\BB^{k+1}$, where the corresponding coefficients are nonnegative; see \eqref{eq:twoscale1}--\eqref{eq:twoscale}.
For $\widehat\beta\in\widehat\HH_\bullet$, this proves
\begin{align}\label{eq:bounds for Trunc}
0\le \Trunc_\bullet(\widehat\beta)\le\widehat\beta\le 1,
\end{align}
and in particular $\supp(\Trunc_\bullet(\widehat\beta))\subseteq \supp(\widehat\beta)$.
With this and the fact that the THB-splines are a basis of $\widehat\YY_\bullet$, Corollary~\ref{cor:basis of X} proves
\begin{align}\label{eq:Truncbasis of X}
\widehat\XX_\bullet={\rm span} \set{\Trunc_\bullet(\widehat\beta)}{\widehat\beta\in\widehat\HH_\bullet\wedge \widehat\beta|_{\partial\widehat\Omega}=0},
\end{align}
where the set on the right-hand side  is even a basis of $\widehat\XX_\coarse$.
%For $\beta\in\HH_\bullet$, we  use the notation $\Trunc_\bullet(\beta)$ instead of $\Trunc_\bullet(\beta\circ\gamma)\circ\gamma^{-1}$.
%Finally, we  assume $\widehat\TT_\bullet\in\widehat\T$.
%In this situation, the application of $\Trunc_\bullet$ reduces to the application of $\trunc_\bullet^{\level(\widehat\beta)+1}$, which is proved in the following lemma.
%With this, one can also  bound the corresponding gradient.
The following proposition shows that for an admissible mesh $\widehat\TT_\coarse\in\widehat\TT_\coarse$, the full truncation $\Trunc_\coarse$ reduces to  $\trunc_\bullet^{\level(\widehat\beta)+1}$.
\begin{proposition}\label{prop:trunc}
Let $\widehat\TT_\bullet\in\widehat\T$ and $\widehat\beta\in\widehat\HH_\bullet$.
Then, it holds that
\begin{align}\label{eq:trunc one}
\Trunc_\bullet(\widehat\beta)=\trunc^{\level(\widehat\beta)+1}_\bullet(\widehat\beta).
\end{align}
%Moreover, there exists a constant $\C{trunc}>0$ such that 
%\begin{align}\label{eq:grad trunc} 
%\norm{\nabla \Trunc_\bullet(\widehat\beta)}{L^\infty({\widehat\Omega})}\le \C{trunc} |\widehat T|^{-1/d}\quad\text{for all }\widehat T\in\widehat\TT^{k+1}.
%\end{align}
%The constant $\C{trunc}$ depends only on $\widehat\TT_0$ and $(p_1,\dots,p_d)$.
\end{proposition}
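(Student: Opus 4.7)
Set $k_0 := \level(\widehat\beta)$ and $\widehat V := \trunc_\bullet^{k_0+1}(\widehat\beta) \in \widehat\YY^{k_0+1}$. The plan is to show that every truncation in the composition defining $\Trunc_\bullet(\widehat\beta)$ beyond level $k_0+1$ acts as the identity on $\widehat V$, so that the whole iterated product collapses to $\widehat V$. Since $\widehat V$ is a sum of basis functions in $\widehat\BB^{k_0+1}$ whose supports are contained in $\supp(\widehat\beta)$, and the two-scale relations \eqref{eq:twoscale1}--\eqref{eq:twoscale} have nonnegative coefficients summing to one, the expansion of $\widehat V$ in any $\widehat\BB^\ell$ with $\ell \ge k_0+1$ again has nonnegative coefficients; in particular, any $\widehat\delta \in \widehat\BB^\ell$ appearing with nonzero coefficient must satisfy $\supp(\widehat\delta) \subseteq \supp(\widehat V) \subseteq \supp(\widehat\beta)$.

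The key claim is that, for every $\ell \ge k_0 + 2$, the coefficient of any $\widehat\delta \in \widehat\BB^\ell$ with $\supp(\widehat\delta) \subseteq \widehat\Omega_\bullet^\ell$ in the expansion of $\widehat V$ vanishes. Granted this, $\trunc_\bullet^\ell$ only deletes terms with zero coefficient from $\widehat V$, so $\trunc_\bullet^\ell(\widehat V) = \widehat V$ for every $\ell \ge k_0+2$; iterating then yields $\Trunc_\bullet(\widehat\beta) = \widehat V = \trunc_\bullet^{k_0+1}(\widehat\beta)$, which is the asserted identity.

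To establish the key claim, I would argue by contradiction. Suppose some such $\widehat\delta$ has positive coefficient; the previous paragraph gives $\supp(\widehat\delta) \subseteq \supp(\widehat\beta)$. Pick any cell $\widehat S \in \widehat\TT^\ell$ with $\widehat S \subseteq \supp(\widehat\delta)$; such an $\widehat S$ exists because $\supp(\widehat\delta)$ is a nonempty union of cells in $\widehat\TT^\ell$. Then $\widehat S \subseteq \widehat\Omega_\bullet^\ell$. By the definition \eqref{eq:parameter mesh} of $\widehat\TT_\bullet$, no element of $\widehat\TT_\bullet$ of level $< \ell$ can be contained in $\widehat\Omega_\bullet^\ell$; combined with the dyadic nesting of the cells $\widehat\TT^k$ and the fact that $\widehat\TT_\bullet$ is a partition of $\widehat\Omega$, this forces the existence of $\widehat T \in \widehat\TT_\bullet$ with $\widehat T \subseteq \widehat S$ and $\level(\widehat T) \ge \ell \ge k_0+2$. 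But $\widehat T \subseteq \widehat S \subseteq \supp(\widehat\beta)$, which contradicts Proposition~\ref{prop:bounded number}: admissibility of $\widehat\TT_\bullet$ forces every element of $\widehat\TT_\bullet$ inside $\supp(\widehat\beta)$ to have level in $\{k_0, k_0+1\}$.

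The main obstacle is the dyadic-nesting step that produces the element $\widehat T \subseteq \widehat S$. Specifically, one must rule out the possibility that the covering element of $\widehat\TT_\bullet$ properly contains $\widehat S$: if $\widehat T \supsetneq \widehat S$, then $\widehat S \subseteq \widehat\Omega_\bullet^\ell \subseteq \widehat\Omega_\bullet^{\level(\widehat T)+1}$ has positive measure, and since $\widehat\Omega_\bullet^{\level(\widehat T)+1}$ is a union of whole cells of $\widehat\TT^{\level(\widehat T)}$, this forces $\widehat T \subseteq \widehat\Omega_\bullet^{\level(\widehat T)+1}$, contradicting $\widehat T \in \widehat\TT_\bullet$. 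Once this bookkeeping is in place, the reduction to Proposition~\ref{prop:bounded number} closes the argument.
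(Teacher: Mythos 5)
Your proof is correct and follows essentially the same route as the paper's: one shows that for every level $\ell\ge\level(\widehat\beta)+2$, any basis function of $\widehat\BB^\ell$ whose support lies inside $\widehat\Omega_\bullet^\ell$ must carry a zero coefficient in the expansion of $\trunc_\bullet^{\level(\widehat\beta)+1}(\widehat\beta)$, since otherwise its support --- contained in $\supp(\widehat\beta)$ --- would contain an active element of level at least $\level(\widehat\beta)+2$, contradicting admissibility via \eqref{eq:level beta is}. The only cosmetic difference is that you derive the support containment from the nonnegativity of the two-scale coefficients \eqref{eq:twoscale}, whereas the paper's Step~1 uses local linear independence of $\widehat\BB^\ell$; both are valid.
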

\begin{proof}
We prove the proposition in three steps.

 \textbf{Step 1:}
Let $k'< k''\in\N_0$ and  $\widehat\beta'\in\widehat\BB^{k'}$ with representation 
$\widehat\beta'=\sum_{\widehat\beta''\in\widehat\BB^{k''}}c_{\widehat\beta''}\widehat\beta''$.
Let $\widehat\beta''\in\widehat\BB^{k''}$  such that  $c_{\widehat\beta''}\neq 0$.
Then, local linear independence (with the open set $O=(0,1)^d \setminus\supp(\widehat\beta')$) of $\widehat\BB^{k''}$ implies $\supp(\widehat\beta'')\subseteq\supp(\widehat\beta')$.

\textbf{Step 2:}
We prove \eqref{eq:trunc one}.
We abbreviate $k=\level(\widehat\beta)$.
Let $\widehat\beta=\sum_{\widehat\beta'\in\widehat\BB^{k+1}}c_{\widehat\beta'}\widehat\beta'$.
%Then, we have 
%\begin{align*}
%\trunc_\bullet^{k+1}(\widehat\beta)=\sum_{\widehat\beta'\in\widehat\BB^{k+1}\atop \supp(\widehat\beta')\not\subseteq\widehat\Omega_\bullet^{k+1}}c_{\widehat\beta'}\widehat\beta'.
%\end{align*}
Let $\widehat\beta'\in\widehat\BB^{k+1}$ with $\supp(\widehat\beta')\not\subseteq\widehat\Omega_\bullet^{k+1}$ and $c_{\widehat\beta'}\neq 0$.
By Step 1, this proves $\supp(\widehat\beta')\subseteq\supp(\widehat\beta)$. 
For $k''>k+1$, we consider  the representation 
\begin{align*}
\trunc_\bullet^{k''}(\widehat\beta') =\sum_{\widehat\beta''\in\widehat\BB^{k''}\atop \supp(\widehat\beta'')\not\subseteq\widehat\Omega_\bullet^{k''}}c_{\widehat\beta''}\widehat\beta'',\quad\text{where } \widehat\beta'=\sum_{\widehat\beta''\in\widehat\BB^{k''}}c_{\widehat\beta''}\widehat\beta''.
\end{align*}
If $\widehat\beta''\in\widehat\BB^{k''}$ with $\supp(\widehat\beta'')\subseteq \widehat\Omega_\bullet^{k''}$, let $\widehat T''\in\widehat\TT^{k''}$ with $\widehat T''\subseteq\supp(\widehat\beta)$. \eqref{eq:parameter mesh} shows the  existence of an element $\widehat T\in\widehat\TT_\bullet$ with $\level(\widehat T)\ge k''$ such that $\widehat T\subseteq \widehat T''$.
To see $c_{\widehat \beta''}=0$, we argue by contradiction and assume $c_{\widehat\beta''}\neq 0$.
By Step 1, this  implies $\widehat T\subseteq\supp(\widehat\beta'')\subseteq\supp(\widehat\beta')\subseteq\supp(\widehat\beta)$.
Due to \eqref{eq:level beta is}, this contradicts admissibility of $\widehat\TT_\bullet$.
This proves $c_{\widehat\beta''}=0$.
Overall, we  conclude $\trunc_\bullet^{k''}(\widehat\beta')=\widehat\beta'$, and thus
$\trunc_\bullet^{k''}(\trunc_\bullet^{k+1}(\widehat\beta))=\trunc_\bullet^{k+1}(\widehat\beta)$ as well as \eqref{eq:trunc one}.

%\textbf{Step 3:}
%We prove \eqref{eq:grad trunc}.
%As before, we abbreviate $k=\level(\widehat\beta)$ and consider the representation 
% $\widehat\beta=\sum_{\widehat\beta\in\widehat\BB^{k+1}}c_{\widehat\beta'}\widehat\beta'$.
% By Step 2 and since $c_{\widehat\beta'}$ are nonnegative and sum up to one (see \eqref{eq:twoscale1}--\eqref{eq:twoscale}) we have 
% \begin{align*}
%\norm{\nabla \Trunc_\bullet(\widehat\beta)}{L^\infty(\widehat\Omega)}\le 
%\sum_{\widehat\beta'\in\widehat\BB^{k+1}}c_{\widehat\beta'}\norm{\nabla\widehat\beta'}{L^\infty(\widehat\Omega)}
% \end{align*}
%The explicit formula for the derivative of a B-spline (see, e.g., \cite[Section 2.1.1]{variational}) proves $\norm{\nabla\widehat\beta'}{L^\infty(\widehat\Omega)}\lesssim |\widehat T|^{-1/d}$, where the hidden constant depends only on $\widehat\TT_0$ and $(p_1,\dots,p_d)$.
%Since the coefficients $c_{\widehat\beta'}$ sum up to one, this concludes the proof.
\end{proof}
\begin{remark}\label{rem:trunc}
Actually, the proposed refinement strategy of Algorithm~\ref{alg:refinement} was designed for hierarchical B-splines; see also Proposition~\ref{prop:bounded number}.
However, \eqref{eq:bounds for Trunc} implies that  Proposition~\ref{prop:bounded number} holds accordingly for truncated hierarchical B-splines.
Moreover,  if one applies the refinement strategy of Algorithm~\ref{alg:refinement},
%(instead\footnote{Ich glaube, das k\"onnte ihre Strategie  auch erf\"ullen. Sie erw\"ahnen es aber nirgends. Instead also eventuell streichen.
%Ich glaube, das k\"onnte sogar der Schl\"ussel zur Optimalit\"at f\"ur ihre Strategie sein, siehe letzte Section.} of that of \cite{bg}), 
\eqref{eq:trunc one} shows that the computation of the truncated hierarchical B-splines greatly simplifies.\qed%
\end{remark}
%If we even assume $\TT_\bullet\in\T$, there hold the following scaling properties:
%\begin{align}
%\norm{\Trunc_\bullet(\beta)}{L^\infty(\widehat\Omega)}\le 1
%\end{align}
%Moreover their support is always contained in the corresponding hierarchical B-spline, i.e., 
%\begin{align}
%\supp\big(\Trunc_\bullet(\widehat\beta)\big)\subseteq\supp(\widehat\beta).
%\end{align}

%\subsection{Basis of $\XX_\bullet$}
%Before we introduce a suitable Scott-Zhang type operator $J_\bullet:H^1_0(\Omega)\to \XX_\bullet$, we start with a lemma, which also plays an important role for the implementation of the hierarchical approach.

%Let $\TT_\bullet\in\T$. 
%We show (E4.1).

\subsection{Verification of (\ref{S:proj})--(\ref{S:grad})}\label{subsec:E4.1 true}
Given $\TT_\bullet\in\T$, we are finally able to introduce a suitable Scott-Zhang-type operator $J_\bullet:H_0^1(\Omega)\to\XX_\bullet$  which satisfies \eqref{S:proj}--\eqref{S:grad}.
To this end, it is sufficient to construct a corresponding operator $\widehat J_\bullet:H_0^1(\widehat\Omega)\to\widehat\XX_\bullet$ in the parameter space, and to define 
\begin{align}
J_\bullet v:=\big(\widehat J_\bullet (v\circ\gamma)\big)\circ\gamma^{-1}\quad\text{for all }v\in H_0^1(\Omega).
\end{align}
By regularity \eqref{eq:Cgamma} of $\gamma$, the properties \eqref{S:proj}--\eqref{S:grad} immediately transfer from the parameter domain $\widehat\Omega$ to the physical domain $\Omega$.
%We define the patch functions $\pi_\bullet$ and $\Pi_\bullet$ in the parameter domain similarly as the patch functions in the physical domain, see Section \ref{subsec:general mesh}.
Recall that $\widehat\BB^k\cap\widehat\BB^{k'}=\emptyset$ for $k\neq k'$.
For $k\in\N_0$ and $\widehat\beta\in\widehat\BB^k$, let $\widehat T_{\widehat\beta}\in \widehat\TT^k$ be an arbitrary but fixed element with $\widehat T_{\widehat\beta}\subseteq\supp(\widehat\beta)$. 
If $\widehat\beta\in\widehat\HH_\bullet$, we additionally require\footnote{Therefore, the elements $\widehat T_{\widehat\beta}$ depend additionally on the considered mesh $\widehat\TT_\bullet$.} $\widehat T_{\widehat\beta}\in\widehat\TT_\bullet$, which is possible due to \eqref{eq:level beta is}.
By local linear independence and continuity
%\footnote{Note that $\widehat T_{\widehat\beta}$ is not open, but the closure of an open set.}
 of $\widehat \BB^k$ (see Section \eqref{subsec:splines}), also the restricted basis functions $\set{\widehat\beta|_{\widehat T_{\widehat\beta}}}{\widehat\beta\in\widehat \BB^k\wedge\widehat\beta|_{\widehat T_{\widehat\beta}}\neq 0}$ are linearly independent.  Hence, the Riesz  theorem guarantees the existence and uniqueness of some  $\widehat\beta^*\in\set{\widehat V^k|_{\widehat T_{\widehat\beta}}}{\widehat V^k\in\widehat\YY^k}$ such that
\begin{align}\label{eq:dual}
\int_{\widehat T_{\widehat\beta}}\widehat\beta^*\widehat \beta'\,dt= \delta_{\widehat\beta,\widehat\beta'}\quad\text{for all }\widehat\beta'\in\widehat\BB^k.
\end{align}
These dual basis functions $\widehat\beta^*$ satisfy the following scaling property.
\begin{lemma}\label{lem:dual}
There exists  $\C{dual}>0$  such that for all $k\in\N_0$ and all $\widehat\beta\in\widehat\BB^k$, it holds that 
\begin{align}\label{eq:dual bound}
\norm{\widehat\beta^*}{L^\infty(\widehat T_{\widehat\beta})}\le \C{dual} |\widehat T_{\widehat\beta}|^{-1}.
\end{align}
The constant $\C{dual}$ depends only on $d$, $\widehat\TT_0$ and $(p_1,\dots,p_d)$.
\end{lemma}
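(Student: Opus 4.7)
The plan is to reduce to the reference cube $[0,1]^d$ by an affine change of variables and then conclude by a finiteness argument. Let $\phi\colon [0,1]^d\to\widehat T_{\widehat\beta}$ be the affine bijection onto the hyperrectangle $\widehat T_{\widehat\beta}$, so $|\det D\phi|=|\widehat T_{\widehat\beta}|$. For each $\widehat\beta'\in\widehat\BB^k$ with $\widehat\beta'|_{\widehat T_{\widehat\beta}}\not\equiv 0$, set $\widetilde\beta':=\widehat\beta'|_{\widehat T_{\widehat\beta}}\circ\phi$, and define $\widetilde\beta^*:=|\widehat T_{\widehat\beta}|\,(\widehat\beta^*\circ\phi)$. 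A change of variables in~\eqref{eq:dual} yields
\begin{align*}
\int_{[0,1]^d}\widetilde\beta^*\,\widetilde\beta'\,dt=\delta_{\widehat\beta,\widehat\beta'}\quad\text{for all }\widehat\beta'\in\widehat\BB^k,
\end{align*}
so that $\widetilde\beta^*$ is the $L^2([0,1]^d)$-dual basis to the pulled-back restricted B-splines. Since $\norm{\widetilde\beta^*}{L^\infty([0,1]^d)}=|\widehat T_{\widehat\beta}|\,\norm{\widehat\beta^*}{L^\infty(\widehat T_{\widehat\beta})}$, the claim~\eqref{eq:dual bound} is equivalent to a uniform bound on $\norm{\widetilde\beta^*}{L^\infty([0,1]^d)}$.

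Next I would exploit finite dimensionality. At most $(p+1)^d$ of the $\widehat\beta'\in\widehat\BB^k$ have nonzero restriction to $\widehat T_{\widehat\beta}$; by local linear independence of $\widehat\BB^k$ (recall Section~\ref{subsec:splines}) they remain linearly independent upon restriction, and hence $\widehat\beta^*$ lies in their span. Writing $\widetilde\beta^*=\sum_{\widehat\beta'}d_{\widehat\beta'}\,\widetilde\beta'$ and testing with each $\widetilde\beta''$ gives the linear system $\widetilde G\,d=e_{\widehat\beta}$, where $\widetilde G_{\widehat\beta',\widehat\beta''}:=\int_{[0,1]^d}\widetilde\beta'\,\widetilde\beta''\,dt$ is the (symmetric positive definite) Gram matrix. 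Since $0\le\widehat\beta'\le 1$, the entries of $\widetilde G$ lie in $[0,1]$, and any uniform lower bound $\lambda_{\min}(\widetilde G)\ge c>0$ yields $\norm{d}{\infty}\le c^{-1}$ and hence
\begin{align*}
\norm{\widetilde\beta^*}{L^\infty([0,1]^d)}\le\sum_{\widehat\beta'}|d_{\widehat\beta'}|\,\norm{\widetilde\beta'}{L^\infty([0,1]^d)}\le (p+1)^d\,c^{-1}.
\end{align*}

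The main point is therefore the uniform lower bound on $\lambda_{\min}(\widetilde G)$, which I would establish by a finiteness argument. The tuple $(\widetilde\beta')$ is completely determined by the local knot vector of $\widehat\BB^k$ in a neighborhood of $\widehat T_{\widehat\beta}$, rescaled affinely so that $\widehat T_{\widehat\beta}$ becomes $[0,1]^d$. Because $\widehat\KK^k$ arises from the \emph{finite} initial vector $\widehat\KK^0$ by iterated uniform bisection, the rescaled local knot spacings are ratios of spacings of $\widehat\KK^0$, and knot multiplicities larger than one can only appear at the finitely many boundary and interior multiple-knot positions of $\widehat\KK^0$. Consequently, as $(k,\widehat\beta)$ varies, the rescaled local knot configuration is drawn from a \emph{finite} set depending only on $\widehat\KK^0$ and $(p_1,\dots,p_d)$. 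For each such configuration the restricted B-splines are linearly independent, so the corresponding Gram matrix $\widetilde G$ is strictly positive definite. Taking $c:=\min\lambda_{\min}(\widetilde G)$ over this finite set yields the desired uniform lower bound, with $c$ depending only on $d$, $\widehat\TT_0$, and $(p_1,\dots,p_d)$. Combined with the rescaling identity, this concludes the proof.
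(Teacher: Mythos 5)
Your proposal is correct and follows essentially the same route as the paper: rescale the element affinely to a reference configuration, observe that the pulled-back dual function scales like $|\widehat T_{\widehat\beta}|^{-1}$, and conclude by noting that only finitely many rescaled local knot configurations occur because all levels arise from $\widehat\KK^0$ by uniform dyadic bisection. Your explicit Gram-matrix argument with the uniform lower bound on $\lambda_{\min}$ merely spells out the final step that the paper leaves implicit (the paper just asserts that $\widetilde\beta^*$ depends only on the finitely many rescaled knot configurations), so the two proofs are substantively identical.
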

\begin{proof}
%We want to reformulate \eqref{eq:dual}.
Recall that $\widehat T_{\widehat\beta}$ is a rectangle of the form $\widehat T^k_{\ell_1,\dots,\ell_d}=[t^k_{1,\ell_1},t^k_{1,\ell_1+1}]\times\dots\times [t^k_{d,\ell_d},t^k_{d,\ell_d+1}]$.
%First, one can clearly replace $\widehat\BB^k$ by the restricted functions $\widehat\BB^k|_{\widehat T_{\widehat\beta}}\setminus\{0\}$.
We abbreviate $C:=|\widehat T_{\widehat\beta}|^{1/d}$, $(a_1,\dots,a_d):=(t_{1,\ell_1}^k,\dots,t_{d,\ell_d}^k)$ and define the normalized element $\widetilde T_{\widehat\beta}:=(\widehat T_{\widehat\beta}-(a_1,\dots,a_d))/C$ and the corresponding affine transformation $\Phi:  \widetilde T_{\widehat\beta}\to \widehat T_{\widehat\beta}$.
We apply the transformation formula to see
%\begin{align*}
$\int_{\widehat T_{\widehat\beta}}\widehat\beta^*\widehat \beta'\,dt=C^d \int_{\widetilde T_{\widehat\beta}} (\widehat\beta^*\circ\Phi)( \widehat\beta\circ\Phi)\,dt.$
%\end{align*}
Therefore, the Riesz  theorem implies that $\widehat\beta^*=(\widetilde\beta^*\circ\Phi^{-1})/C^d$, where $\widetilde \beta^*$ is the unique element in $\widetilde\BB^k:=\set{\widehat\beta'\circ\Phi}{\widehat\beta'\in\widehat\BB^k}\setminus\{0\}$ such that 
\begin{align*}
\int_{\widetilde T_{\widehat\beta}}\widetilde\beta^*\widetilde \beta'\,dt= \delta_{\widetilde\beta,\widetilde\beta'}\quad\text{for all }\widetilde\beta'\in\widetilde\BB^k.
\end{align*}
%It remains to show the boundedness of $\widetilde\beta^*$.
By \eqref{eq:B-spline}, each $\widetilde\beta'\in\widetilde\BB^k$ is of the form 
\begin{align*}
\widetilde\beta'(\widetilde s_1,\dots,\widetilde s_d)=\prod_{i=1}^d B(s_i|t_{i,j_i}^k,\dots,t_{i,j_i+p_i+1}^k)\text{ with }(s_1,\dots,s_d)=(\widetilde s_1,\dots,\widetilde s_d)C+(a_1,\dots,a_d).
\end{align*}
We only have to consider $\widetilde\beta'$ that are supported on $\widetilde T_{\widehat\beta}$.
As the support of any $B(\cdot|t_{i,j_i}^k,\dots,t_{i,j_i+p_i+1}^k)$ is just $[t_{i,j_i}^k,t_{i,j_i+p_i+1}^k]$, it is sufficient to consider $j_i=\ell_i-p_i,\dots,\ell_i$.
By the definition of B-splines, one immediately sees that an affine transformation in the parameter domain can just be passed to the knots, i.e.,
\begin{align*}
B(s_i|t_{i,j_i}^k,\dots,t_{i,j_i+p_i+1}^k)=B\big(\widetilde s_i\big|(t_{i,j_i}^k-a_1)/C,\dots,(t_{i,j_i+p_i+1}^k-a_d)/C\big).
\end{align*}
Altogether, we see that $\widetilde\beta^*$ depends only on the  knots 
\begin{align*}
\Big(\frac{t_{i,j_i}^k-a_1}{C},\dots,\frac{t_{i,j_i+p_i+1}^k- a_d}{C}:i=1,\dots,d, j_i=\ell_i-p_i,\dots,\ell_i\Big).
\end{align*}
Since we only use  global dyadic bisection between two consecutive levels, we see that these knots depend only on $d$, $\widehat\TT_0$ and $(p_1,\dots,p_d)$ but not on the level $k$.
This shows $\norm{\widetilde\beta}{L^\infty(\widehat T_{\widehat\beta})}\lesssim 1$, where the hidden constant depends only on $d$, $\widehat\TT_0$ and $(p_1,\dots,p_d)$.
%First, recall the notation of the knots $\widehat\KK^k_i=(t^k_{i,j})_{j=0}^{N^k_i+p_i}$ for $i=1\dots,d$ of Section~\ref{subsec:splines}.
%Let $j_1,\dots,j_d$ be the integers with $\widehat T_{\widehat\beta}=\widehat T_{j_1,\dots, j_d}$.
%Each basis function $\widehat\beta\in\widehat\BB^k$ 
%\begin{align}
%\end{align}
\end{proof}
We adopt the approach of \cite{speleers}.
For $\widehat v\in L^2(\widehat\Omega)$, we abbreviate $\dual{\widehat\beta^*}{\widehat v}:=\int_{\widehat T_{\widehat\beta}}\widehat \beta^* \widehat v\,dt$ and define
\begin{align}
&\widehat J^k:L^2(\widehat \Omega)\to \widehat\YY^k, \quad \widehat J^k\widehat v:= \sum_{\widehat\beta\in\widehat\BB^k}  \dual{\widehat\beta^*}{\widehat v} \,\widehat\beta,\\
%\end{align}
%and, see also \eqref{eq:Truncbasis of X} 
%\begin{align}
&\widehat J_\bullet:L^2(\widehat\Omega)\to \widehat\XX_\bullet,\quad \widehat J_\coarse \widehat v:= \sum_{k=0}^{M_\bullet-1}\sum_{\widehat\beta\in\widehat\HH_\bullet\wedge \widehat\beta|_{\partial\widehat\Omega}= 0\atop\level(\widehat\beta)=k}\dual{\widehat\beta^*}{\widehat v} \,\Trunc_\bullet(\widehat\beta).
\end{align}
%The idea to define the operator in this way, stems from \cite{speleers}, where projection operators on $\widehat\YY_\bullet$ instead of $\widehat\XX_\bullet$ are studied.
%Recall the definition of $M_\bullet$ as the minimal integer such that $\widehat\Omega_\bullet^{M_\bullet}=\emptyset$.
%Note that the sum is finite, since $\level(\widehat\beta)\le M_\bullet$ for all $\widehat\beta\in\HH_\bullet$, see Section \ref{subsec:parameter hsplines}.
Before we prove the properties \eqref{S:proj}--\eqref{S:grad}, we collect some basic properties of $\widehat J_\bullet$.

\begin{lemma} \label{lem:Scott}
Let $\widehat\TT_\bullet\in\widehat\T$.
Then, $\widehat J_\bullet$ is a projection, i.e.,    
\begin{align}\label{eq:projection}
\widehat J_\bullet\widehat V_\bullet=\widehat V_\bullet\quad\text{for all }\widehat V_\bullet\in\XX_\bullet.
\end{align}
Moreover, $\widehat J_\bullet$ is locally $L^2$-stable, i.e., there exists $C_J>0$ such that for all $\widehat T\in \widehat\TT_\bullet$
\begin{align}\label{eq:L2stability}
\norm{\widehat J_\bullet\widehat v}{L^2(\widehat T)}\le C_J \norm{\widehat v}{L^2(\pi_\bullet^{2(p+1)}(\widehat T))}\quad\text{for all }\widehat v\in L^2(\widehat\Omega).
\end{align}
%Finally, there exists a constant $\widetilde C_J>0$ such that also the corresponding weighted $H^1$-seminorm can be bounded
%\begin{align}\label{eq:H1semistability}
%|\widehat T|^{1/d}\norm{\nabla \widehat J_\bullet\widehat v}{L^2(\widehat T)}\le \widetilde C_J \norm{\widehat v}{L^2(\pi_\bullet^{2(p+1)}(\widehat T))}.
%\end{align}
The constant $C_J$  depends only  on $d$, $\widehat\TT_0$, and $(p_1,\dots,p_d)$.
\end{lemma}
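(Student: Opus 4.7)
The plan is to base both assertions on the observation that, by Corollary~\ref{cor:basis of X} and~\eqref{eq:Truncbasis of X}, $\widehat\XX_\bullet$ is spanned by the linearly independent family $\{\Trunc_\bullet(\widehat\alpha):\widehat\alpha\in\widehat\HH_\bullet,\ \widehat\alpha|_{\partial\widehat\Omega}=0\}$, so \eqref{eq:projection} reduces to establishing the preservation-of-coefficient identity
\begin{align*}
\int_{\widehat T_{\widehat\beta}}\widehat\beta^*\,\Trunc_\bullet(\widehat\alpha)\,dt = \delta_{\widehat\alpha,\widehat\beta}\quad\text{for all }\widehat\alpha,\widehat\beta\in\widehat\HH_\bullet.
\end{align*}
Two structural facts drive the argument: since $\widehat\Omega_\bullet^{k+1}$ is a union of level-$k$ cells, the choice $\widehat T_{\widehat\beta}\in\widehat\TT^k\cap\widehat\TT_\bullet$ with $k:=\level(\widehat\beta)$ forces $\widehat T_{\widehat\beta}\cap\widehat\Omega_\bullet^{k+1}=\emptyset$, and Proposition~\ref{prop:trunc} collapses $\Trunc_\bullet(\widehat\alpha)$ to the single step $\trunc_\bullet^{\level(\widehat\alpha)+1}(\widehat\alpha)$.

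For \eqref{eq:projection}, I would case-distinguish on $k':=\level(\widehat\alpha)$ relative to $k$. If $k'>k$, then $\supp(\Trunc_\bullet(\widehat\alpha))\subseteq\supp(\widehat\alpha)\subseteq\widehat\Omega_\bullet^{k+1}$ is disjoint from $\widehat T_{\widehat\beta}$; if $k'=k$, every B-spline subtracted in $\trunc_\bullet^{k+1}(\widehat\alpha)$ has support inside $\widehat\Omega_\bullet^{k+1}$ and hence vanishes on $\widehat T_{\widehat\beta}$, reducing the identity to \eqref{eq:dual}. The delicate subcase is $k'<k$: writing $\Trunc_\bullet(\widehat\alpha)=\sum_{\widehat\gamma'\in\widehat\BB^{k'+1},\ \supp(\widehat\gamma')\not\subseteq\widehat\Omega_\bullet^{k'+1}}c_{\widehat\gamma'}\widehat\gamma'$, only $\widehat\gamma'$ with $c_{\widehat\gamma'}\neq0$ and $\widehat T_{\widehat\beta}\subseteq\supp(\widehat\gamma')$ contribute to the integral, and Step~1 of Proposition~\ref{prop:trunc} forces $\supp(\widehat\gamma')\subseteq\supp(\widehat\alpha)$. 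Selecting a level-$(k'+1)$ cell $\widehat T''\subseteq\supp(\widehat\gamma')$ outside $\widehat\Omega_\bullet^{k'+1}$, its level-$k'$ parent $\widehat T''_{k'}$ is an active cell of $\widehat\TT_\bullet$ contained in $\supp(\widehat\alpha)$, so $\widehat T''_{k'}\in\NN_\bullet(\widehat T_{\widehat\beta})$ through $\widehat\alpha$ and admissibility forces $k'=k-1$. In this subcase $\widehat\gamma'\in\widehat\BB^k$ satisfies $\supp(\widehat\gamma')\not\subseteq\widehat\Omega_\bullet^k$, so $\widehat\gamma'\neq\widehat\beta$ (since $\widehat\beta\in\widehat\HH_\bullet$ requires $\supp(\widehat\beta)\subseteq\widehat\Omega_\bullet^k$), and \eqref{eq:dual} annihilates the contribution.

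For the local $L^2$-stability~\eqref{eq:L2stability}, I fix $\widehat T\in\widehat\TT_\bullet$ and observe that only $\widehat\beta\in\widehat\HH_\bullet$ with $\widehat T\subseteq\supp(\widehat\beta)$ contribute to $(\widehat J_\bullet\widehat v)|_{\widehat T}$; by Proposition~\ref{prop:bounded number} their number is bounded by $2(p+1)^d$, by Remark~\ref{rem:connected} each associated $\widehat T_{\widehat\beta}$ lies in $\pi_\bullet^{2(p+1)}(\widehat T)$, and admissibility together with \eqref{eq:level beta is} gives $|\widehat T_{\widehat\beta}|\simeq|\widehat T|$. Combining Cauchy--Schwarz in $L^2(\widehat T_{\widehat\beta})$ with the bound $\|\widehat\beta^*\|_{L^\infty(\widehat T_{\widehat\beta})}\le\C{dual}|\widehat T_{\widehat\beta}|^{-1}$ from Lemma~\ref{lem:dual} and the uniform estimate $\|\Trunc_\bullet(\widehat\beta)\|_{L^\infty(\widehat\Omega)}\le 1$ from \eqref{eq:bounds for Trunc} yields a term-wise control $\lesssim\|\widehat v\|_{L^2(\widehat T_{\widehat\beta})}\le\|\widehat v\|_{L^2(\pi_\bullet^{2(p+1)}(\widehat T))}$, and summation over the uniformly bounded number of contributing $\widehat\beta$ concludes~\eqref{eq:L2stability}. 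The main obstacle I anticipate is the subcase $k'=k-1$ of the projection argument: since $\Trunc_\bullet(\widehat\alpha)|_{\widehat T_{\widehat\beta}}$ need not vanish pointwise there, the zero of the integral must be extracted from the fine distinction between the active basis function $\widehat\beta$ (with support strictly inside $\widehat\Omega_\bullet^k$) and the surviving tensor-product B-splines in $\trunc_\bullet^k(\widehat\alpha)$, via the neighbor-admissibility argument outlined above.
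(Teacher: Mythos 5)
Your proof is correct, and the two halves relate to the paper differently. For the local $L^2$-stability \eqref{eq:L2stability} you follow essentially the same path as the paper: restrict the sum to the uniformly many $\widehat\beta$ with $\widehat T\subseteq\supp(\widehat\beta)$ (Proposition~\ref{prop:bounded number}), locate $\widehat T_{\widehat\beta}$ inside $\pi_\bullet^{2(p+1)}(\widehat T)$ via Remark~\ref{rem:connected}, and combine $\|\widehat\beta^*\|_{L^\infty(\widehat T_{\widehat\beta})}\lesssim|\widehat T_{\widehat\beta}|^{-1}$ with $0\le\Trunc_\bullet(\widehat\beta)\le1$ and the comparability $|\widehat T_{\widehat\beta}|\simeq|\widehat T|$ coming from admissibility. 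For the projection property \eqref{eq:projection}, however, the paper simply defers to \cite[Theorem~4]{speleers} (noting that \eqref{eq:Truncbasis of X} lets that argument carry over to the subspace with boundary conditions), whereas you supply a self-contained proof of the duality $\int_{\widehat T_{\widehat\beta}}\widehat\beta^*\,\Trunc_\bullet(\widehat\alpha)\,dt=\delta_{\widehat\alpha,\widehat\beta}$ that explicitly exploits admissibility through Proposition~\ref{prop:trunc} and the neighbor condition \eqref{def:admissible}. Your case analysis is sound: for $\level(\widehat\alpha)>\level(\widehat\beta)$ the supports miss $\widehat T_{\widehat\beta}$ up to a null set; for equal levels truncation is invisible on $\widehat T_{\widehat\beta}$ and \eqref{eq:dual} applies; for $\level(\widehat\alpha)<\level(\widehat\beta)$ the admissibility argument excludes a level gap of two or more, and in the remaining gap-one case every surviving B-spline of $\trunc_\bullet^{\level(\widehat\alpha)+1}(\widehat\alpha)$ differs from $\widehat\beta$ because its support leaves $\widehat\Omega_\bullet^{\level(\widehat\beta)}$. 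The trade-off is that the cited result of \cite{speleers} holds for arbitrary hierarchical meshes via the full iterated truncation, while your argument is shorter but only valid on admissible meshes — which is all that is needed here. One cosmetic point: $\widehat T_{\widehat\beta}\cap\widehat\Omega_\bullet^{\level(\widehat\beta)+1}$ need not be empty (closed cells may share boundary faces); what you actually use, and what holds, is that this intersection has measure zero, which suffices for all the integrals involved.
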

\begin{proof}
We prove the lemma in three steps.

\textbf{Step 1:}
The projection property \eqref{eq:projection} can be proved  as in \cite[Theorem 4]{speleers}.
There, a corresponding projection onto $\widehat\YY_\bullet$ instead of $\widehat\XX_\bullet=\widehat\YY_\bullet\cap H_0^1(\widehat\Omega)$ is considered.
However, with \eqref{eq:Truncbasis of X}, the proof works exactly the same.

%\textbf{Step 2:}
%To see \eqref{eq:L2stability}--\eqref{eq:H1semistability}, we first note that 
%admissibility of $\TT_\bullet$, \eqref{eq:supp elements} and \eqref{eq:level beta is} show that the support of any  $\widehat\beta\in\widehat\HH_\bullet$ consists of at most $2^d (p+1)^d$ elements $\widehat T'\in\widehat\TT_\bullet$.  
%Hence, there holds for all $\widehat\beta\in\widehat \HH_\bullet$ and all $\widehat T\in\widehat\TT_\bullet$ 
%\begin{align}\label{eq:supp cap}
%|\supp(\widehat\beta)\cap \widehat T|>0\quad\Longrightarrow\quad\supp(\widehat\beta)\subseteq\pi_\bullet^{2(p+1)}(T).
%\end{align}

\textbf{Step 2:}
We prove \eqref{eq:L2stability}.
The triangle inequality proves that 
\begin{align*}
\norm{\widehat J_\bullet\widehat v}{L^2(\widehat T)}\le \sum_{k=0}^{M_\bullet-1}\sum_{\substack{\widehat\beta\in\widehat\HH_\bullet\wedge \widehat\beta|_{\partial\widehat\Omega}= 0\\Ê|\supp(\widehat\beta)\cap \widehat T|>0\atop\level(\widehat\beta)=k}}\norm{\widehat \beta^*}{L^2(\widehat T_{\widehat\beta})}\norm{\widehat v}{L^2(\widehat T_{\widehat\beta})}\norm{\Trunc_\bullet(\widehat\beta)}{L^2(\widehat T)}.
\end{align*}
By Remark~\ref{rem:connected}, it holds that $\supp(\widehat\beta)\subseteq \pi_\bullet^{2(p+1)}(\widehat T)$ if $\widehat\beta\in\widehat\HH_\bullet$ with $|\supp(\widehat\beta)\cap \widehat T|>0$.
Therefore, we obtain that
\begin{align*}
\norm{\widehat J_\coarse \widehat v}{L^2(\widehat T)}\le \norm{\widehat v}{L^2(\pi_\bullet^{2(p+1)}(\widehat T))}\sum_{\substack{\widehat\beta\in\widehat\HH_\bullet\\ \supp(\widehat\beta)\subseteq\pi_\bullet^{2(p+1)}(\widehat T)}}\norm{\widehat \beta^*}{L^2(\widehat T_{\widehat\beta})}\norm{\Trunc_\bullet(\widehat\beta)}{L^2(\widehat T)}.
\end{align*}
We consider the set $\set{\widehat\beta\in\widehat\HH_\bullet}{\supp(\widehat\beta)\subseteq\pi_\bullet^{2(p+1)}(\widehat T)}$.
Since the support of each basis function in $\widehat\HH_\bullet$ consists of elements in $\widehat\TT_\bullet$, see \eqref{eq:supp elements}, this set is a subset of $\set{\widehat\beta\in\widehat\HH_\bullet}{\exists \widehat T'\in\Pi_\bullet^{2(p+1)}(\widehat T)\text{ with }\widehat T'\subseteq \supp(\widehat\beta)}$.
By (M2) and Proposition \ref{prop:bounded number}, the cardinality of the latter set is bounded by a constant $C>0$ that depends only on $d$ and $(p_1,\dots,p_d)$.
%Thus, it remains to bound $\norm{\widehat \beta^*}{L^2(\widehat T_{\widehat\beta})}\norm{\Trunc_\bullet(\widehat\beta)}{L^2(\widehat T)}$ for $\widehat\beta\in\widehat\HH_\bullet$ with $\supp(\widehat\beta)\subseteq\pi_\bullet^{2(p+1)}(\widehat T)$.
With \eqref{M:shape}, \eqref{eq:bounds for Trunc}, and \eqref{eq:dual bound}, we see that for $\widehat\beta\in\widehat\HH_\bullet$ with $\supp(\widehat\beta)\subseteq\pi_\bullet^{2(p+1)}(\widehat T)$, it holds that 
\begin{align*}
\norm{\widehat \beta^*}{L^2(\widehat T_{\widehat\beta})}\norm{\Trunc_\bullet(\widehat\beta)}{L^2(\widehat T)}\le |\widehat T_{\widehat\beta}|^{1/2}|\widehat T|^{1/2} \norm{\widehat\beta^*}{L^\infty (\widehat T_{\widehat \beta})}\lesssim 1.
\end{align*} 
The hidden constant  depends only on $d$, $\widehat\TT_0$, and $(p_1,\dots,p_d)$.

%\textbf{Step 3:}
%Finally,  \eqref{eq:H1semistability} follows immediately from \eqref{eq:parameter invest} and \eqref{eq:L2stability}.
%Similarly as in Step 3, we see 
%\begin{align*}
%\norm{\nabla \widehat J_\bullet\widehat v}{L^2(\widehat T)}\le \norm{\widehat v}{L^2(\pi_\bullet^{2(p+1)}(T))}\sum_{\widehat\beta\in\widehat\HH_\bullet\wedge \supp(\widehat\beta)\subseteq\pi_\bullet^{2(p+1)}(\widehat T)}\norm{\widehat \beta^*}{L^2(\widehat T_{\widehat\beta})}\norm{\nabla\Trunc_\bullet(\widehat\beta)}{L^2(\widehat T)}.
%\end{align*}
%As before, we just have to show the boundedness of the term $\norm{\widehat \beta^*}{L^2(\widehat T_{\widehat\beta})}\norm{\nabla\Trunc_\bullet(\widehat\beta)}{L^2(\widehat T)}$.
%With \eqref{M:shape}, \eqref{eq:level beta is}, \eqref{eq:grad trunc} and \eqref{eq:dual bound}, we see that this term is bounded by $|\widehat T|^{-1/d}$ up to a constant that depends only on $\widehat\TT_0$ and $(p_1,\dots,p_d)$.
\end{proof}

We prove \eqref{S:proj} in the parameter domain.
Let $\widehat T\in\widehat\TT_\bullet, \widehat v\in H_0^1(\widehat\Omega)$, and $\widehat V_\bullet\in \widehat\XX_\bullet$ such that 
$\widehat v|_{\pi^{\rm proj}_\bullet(\widehat T)}=\widehat V_\bullet|_{\pi^{\rm proj}_\bullet(\widehat T)}$ with $k_{\rm proj}:= 2(p+1)$.
%If $\widehat\beta\in\widehat\HH_\bullet$, then
%admissibility of $\TT_\bullet$, \eqref{eq:supp elements} and \eqref{eq:level beta is} show that the support of $\widehat\beta$ consists of at most $2^d (p+1)^d$ elements $\widehat T'\in\widehat\TT_\bullet$.  
Remark~\ref{rem:connected} shows that for $\widehat\beta\in\widehat\HH_\bullet$ with $|\supp(\widehat\beta)\cap\widehat T|>0$, it holds that  $\supp(\widehat\beta)\subseteq\pi_\bullet^{{\rm proj}}(\widehat T)$.
With this, \eqref{eq:bounds for Trunc}, and the projection property \eqref{eq:projection} of $\widehat J_\bullet$, we conclude that 
\begin{eqnarray*}
(\widehat J_\bullet \widehat v)|_{\widehat T}&=&\sum_{k=0}^{M_\bullet-1}\sum_{\widehat\beta\in\widehat\HH_\bullet\wedge \widehat\beta|_{\partial\widehat\Omega}= 0\atop\level(\widehat\beta)=k}\dual{\widehat\beta^*}{\widehat v} \,\Trunc_\bullet(\widehat\beta)|_{\widehat T}\\%=\sum_{k=0}^{M_\bullet-1}\sum_{\widehat\beta\in\widehat\HH_\bullet\wedge \widehat\beta|_{\partial\widehat\Omega}= 0\wedge |\supp (\widehat\beta)\cap \widehat T|>0\atop\level(\widehat\beta)=k}\dual{\widehat\beta^*}{\widehat v} \Trunc_\bullet(\widehat\beta)|_{\widehat T}\\
&\stackrel{\eqref{eq:bounds for Trunc}}{=}&\sum_{k=0}^{M_\bullet-1}\sum_{\substack{\widehat\beta\in\widehat\HH_\bullet\wedge \widehat\beta|_{\partial\widehat\Omega}= 0\\ \supp (\widehat\beta)\subseteq\pi_\bullet^{\rm proj} (\widehat T)\atop\level(\widehat\beta)=k}}\dual{\widehat\beta^*}{\widehat V_\bullet} \,\Trunc_\bullet(\widehat\beta)|_{\widehat T}= (\widehat J_\bullet \widehat V_\bullet)|_{\widehat T}\stackrel{\eqref{eq:projection}}= \widehat V_\bullet|_{\widehat T}=\widehat v|_{\widehat T}.
\end{eqnarray*}

Next, we prove \eqref{S:app}. 
Let $\widehat T\in\widehat\TT_\bullet$, $\widehat v\in H_0^1(\widehat\Omega)$, and  $\widehat V_\bullet\in \widehat \XX_\bullet$.
By  \eqref{eq:projection}--\eqref{eq:L2stability}, it holds that 
\begin{align*}
\norm{(1-\widehat J_\bullet)\widehat v}{L^2(\widehat T)} \stackrel{\eqref{eq:projection}}{=}\norm{\widehat v-\widehat V_\bullet}{L^2(\widehat T)}+\norm{\widehat J_\bullet (\widehat v-\widehat V_\bullet)}{L^2(\widehat T)}\stackrel{\eqref{eq:L2stability}}{\lesssim} \norm{  \widehat v-\widehat V_\bullet}{L^2(\pi_\bullet^{2(p+1)}(\widehat T))}.
\end{align*}
To proceed, we distinguish between two cases, first, $\pi_\bullet^{4(p+1)}(\widehat T)\cap \partial\widehat\Omega=\emptyset$ and, second, $\pi_\bullet^{4(p+1)}(\widehat T)\cap \partial\widehat\Omega\neq\emptyset$, i.e., if $\widehat T$ is far away from the boundary or not.
Note that these cases are equivalent to $|\pi_\bullet^{4(p+1)}(\widehat T)\cap \partial\widehat\Omega|=0$ resp. $|\pi_\bullet^{4(p+1)}(\widehat T)\cap \partial\widehat\Omega|>0$, since the elements in the parameter domain are rectangular.

In the first case, we proceed as follows:
\eqref{eq:0 contained} especially proves $1\in\widehat\YY_\bullet$ with $1=\sum_{\widehat\beta\in\widehat\HH_\bullet}c_{\widehat\beta}\widehat\beta$ on $\widehat\Omega$.
With Remark \ref{rem:connected}, we see that $|\supp(\widehat\beta)\cap\pi_\bullet^{2(p+1)}(\widehat T)|>0$ implies $\supp(\widehat\beta)\subseteq\pi_\bullet^{4(p+1)}(\widehat T)$.
Therefore, the  restriction satisfies
\begin{align*}
1=\sum_{\widehat\beta\in\widehat\HH_\bullet}c_{\widehat\beta}\widehat\beta|_{\pi_\bullet^{2(p+1)}(\widehat T)}=\sum_{\widehat\beta\in\widehat\HH_\bullet\atop|\supp(\widehat\beta)\cap \pi_\bullet^{2(p+1)}(\widehat T)|>0}c_{\widehat\beta}\widehat\beta|_{\pi_\bullet^{2(p+1)}(\widehat T)}=\sum_{\widehat\beta\in\widehat\HH_\bullet\atop \supp(\widehat \beta)\subseteq \pi_\bullet^{4(p+1)}(\widehat T)}c_{\widehat\beta}\widehat\beta|_{\pi_\bullet^{2(p+1)}(\widehat T)}.
\end{align*}
We define 
\begin{align*}
\widehat V_\bullet:=\widehat v_{\pi_\bullet^{2(p+1)}(\widehat T)}\sum_{\substack{\widehat\beta\in\widehat\HH_\bullet\\\supp(\widehat\beta)\subseteq \pi_\bullet^{4(p+1)}(\widehat T)}}c_{\widehat\beta}\widehat\beta,\quad\text{where }\widehat v_{\pi_\bullet^{2(p+1)}(\widehat T)}:=| \pi_\bullet^{2(p+1)}(\widehat T)|^{-1}\int_{\pi_\bullet^{2(p+1)}(\widehat T)}\widehat v\,dt.
\end{align*}
In the second case, we set $\widehat V_\bullet:=0$.
For the first case, the Poincar\'e inequality combined with admissibility concludes the proof, whereas we use the Friedrichs inequality combined with admissibility in the second case. 
In either case, we obtain $\widehat V_\bullet\in\widehat\XX_\bullet$ and 
\begin{align}\label{eq:PF}
\norm{\widehat v-\widehat V_\coarse}{L^2(\pi_\coarse ^{2(p+1)}(\widehat T))}\lesssim\diam(\pi_\bullet^{4(p+1)}(\widehat T))\norm{\nabla \widehat v}{L^2(\pi_\bullet^{4(p+1)}(\widehat T))}\simeq |\widehat T|^{1/d} \norm{\nabla\widehat v}{L^2(\pi_\bullet^{4(p+1)}(\widehat T))}.
\end{align}

The hidden constants depend only on $\widehat\TT_0$, $(p_1,\dots,p_d)$, and  the shape of the patch $\pi_\bullet^{2(p+1)}(\widehat T)$ resp. the shape of $\pi_\bullet^{4(p+1)}(\widehat T)$ and of $\pi_\bullet^{2(p+1)}(\widehat T)\cap \partial\widehat \Omega$.
However, by admissibility, the number of different patch shapes is bounded itself by a constant which again depends only on $d$, $\widehat\TT_0$ and  $(p_1,\dots,p_d)$.
%In both cases, Lemma~\ref{lem:general poinc} with $k=2(p+1)$ concludes the proof.

Finally, we prove \eqref{S:grad}.
Let again $\widehat T\in\widehat\TT_\bullet$, $\widehat v\in H_0^1(\widehat\Omega)$. 
%By \eqref{eq:projection} and \eqref{eq:H1semistability}, there holds 
For all $\widehat V_\bullet\in\widehat \XX_\bullet$ that are constant on $\widehat T$, the projection property \eqref{eq:projection} implies that 
\begin{align*}
\norm{\nabla \widehat J_\bullet\widehat v}{L^2(\widehat T)}\stackrel{\eqref{eq:projection}}{=}\norm{\nabla \widehat J_\bullet(\widehat v-\widehat V_\bullet)}{L^2(\widehat T)}&\stackrel{\eqref{eq:parameter invest}}{\lesssim} |\widehat T|^{-1/d}\norm{\widehat J_\coarse (v-\widehat V_\coarse)}{L^2(\widehat T)}\\
&\stackrel{\eqref{eq:L2stability}}{\lesssim}|\widehat T|^{-1/d}\norm{\widehat v-\widehat V_\bullet}{L^2(\pi_\bullet^{2(p+1)}(\widehat T))}.
\end{align*}
Arguing as before and using \eqref{eq:PF}, we conclude the proof.

\subsection{Verification of (\ref{O:inverse})}
The inverse estimate  \eqref{O:inverse} follows from a standard scaling argument together with the regularity \eqref{eq:Cgamma} of $\gamma$ and the Poincar\'e-Friedrichs inequality on $\gamma^{-1}(T)$.
The constant $\C{inv}'$ depends only on $d$, $\C{\gamma}$, $\widehat \TT_0$, and $( q_1,\dots, q_d)$.

\subsection{Verification of (\ref{O:dual})--(\ref{O:grad})}
This section adapts \cite[Section~3.4]{nv}.
Let $W\in\mathcal{P}(\Omega)$, $\TT_\coarse\in\T$,  and $T, T'\in\TT_\coarse$ with $(d-1)$-dimensional intersection $E:=T\cap T'$.
We set $\widehat W:= W\circ \gamma$, $\widehat T:=\gamma^{-1}(T)$, $\widehat T':=\gamma^{-1}(T')$, and $\widehat E:=\gamma^{-1}(E)$.
Let $\gamma_{\widehat T}:\R^{d}\to\R^{d}$ be the affine transformation with $\widetilde T:=\gamma^{-1}_{\widehat T}(\widehat T)=[0,1]^{d}$.
Due to admissibility of $\widehat \TT_\coarse$, the number of  different configurations for the set $\widetilde T':=\gamma_{\widehat T}^{-1}(\widehat T')$ is uniformly bounded by a constant that depends only on $d$ and $\widehat \TT_0$.
We fix  some cut-off function $\widetilde\varphi\in H_0^1(\widetilde T\cup\widetilde T')$ with $\widetilde\varphi>0$ almost everywhere on $\widetilde E:=\gamma^{-1}_{\widehat T}(\widehat E)$.
We define $\varphi:=\widetilde\varphi\circ\gamma_{\widehat T'}^{-1}\circ\gamma^{-1}$, and $\widetilde W:= W\circ\gamma \circ \gamma_{\widehat T}$.
% and $J_{\coarse,E} W:=W\varphi_{T'}$.
Due to the finite dimension of the polynomial space $\widetilde{\mathcal{P}}(\widetilde T\cup \widetilde T'):=\set{W'\circ\gamma \circ \gamma_{\widehat T}}{W'\in\mathcal{P}(\Omega)}$, there exists $\widetilde W'\in \widetilde{\mathcal{P}}(\widetilde T\cup \widetilde T')$ with $ \widetilde W|_{\widetilde E}= \widetilde W'|_{\widetilde E}$ and $\norm{ \widetilde W'\widetilde \varphi}{L^2(\widetilde T\cup\widetilde T')}\lesssim \norm{ \widetilde W}{L^2(\widetilde E)}$.
Finally, we set $W':= \widetilde W'\circ\gamma_{\widehat T'}^{-1}\circ\gamma^{-1}$, and $J_{\coarse,E}:=W'\varphi$.
Standard scaling arguments prove that \eqref{O:dual}--\eqref{O:grad} are satisfied, where the constants depend only on $d$, $\C{\gamma}$, $\widehat \TT_0$, and $( q_1,\dots, q_d)$.
%Standard scaling arguments and the fact that there exist only finitely many reference configurations, conclude the proof of \eqref{O:dual}--\eqref{O:grad}.
%The involved constants ?? depend only on $d$, $\C{\gamma}$, $\widehat \TT_0$, and $( q_1,\dots, q_d)$.

%We conclude the proof similarly as before.

%Further, we have 
%\begin{align*}
%\norm{\widehat J_\bullet (\widehat v-\widehat V_\bullet)}{L^2(\widehat T)}&=\Big \|\sum_{k=0}^{M_\bullet-1}\sum_{\widehat\beta\in\widehat\HH_\bullet\wedge \widehat\beta|_{\partial\widehat\Omega}= 0\atop\level(\widehat\beta)=k}\dual{\widehat\beta^*}{\widehat v} \cdot\Trunc_\bullet(\widehat\beta)\Big\|_{L^2(\widehat T)}\\
%&\le 
%\end{align*}

% There exists a constant $\Capp>0$ and $k_{\mathrm{app}}\in\N_0$ such that for all $\TT_\bullet\in\T$, $T\in\TT_\bullet$ and $v\in H_0^1(\Omega)$, it holds with $\pi_\bullet^\mathrm{app}:=\pi_\bullet^{k_\mathrm{app}}$
%\begin{align*}
%|T|^{-1/d}\norm{(1-J_\bullet)v}{L^2(T)}\le \Capp \norm{v}{H^1(\pi_\bullet^\mathrm{app}(T))}.
%\end{align*}

%If $T'$ was in $\pi_\bullet^{{\rm loc}}(T)$, there would hold $T'\cap\pi_\bullet^{{\rm loc}-1}(T)\neq\emptyset$ for ${\rm loc}>0$ resp. $T'=T$ for ${\rm loc}=0$.
%This is equivalent to $\pi_\bullet^{{\rm loc}-1}(T')\cap T\neq\emptyset$ resp. $T'=T$.

% !TEX encoding = MacOSRoman
%!TEX root = igafem.tex
\section{Numerical examples}\label{sec:numerics}

\noindent In this section, we apply our adaptive algorithm to the 2D Poisson model problem
\begin{align}\label{eq:Poisson}
\begin{split}
-\Delta u&=f\quad \text{in }\Omega,\\
u&=0\quad\text{on }\partial\Omega.
\end{split}
\end{align}
on different NURBS surfaces $\Omega\subset\R^2$.
 On the parameter domain $\widehat{\Omega}=(0,1)^d$, let $p_1^\gamma,p_2^\gamma\ge 1$ be  fixed polynomial degrees and $\widehat\KK^\gamma$ be  an arbitrary fixed $2$-dimensional vector of $p_i^\gamma$-open knot vectors with multiplicity smaller or equal to $p_i^\gamma$ for the interior knots, i.e.,
\begin{align}
\widehat\KK^\gamma=(\widehat\KK^\gamma_1,\widehat\KK^\gamma_2);
\end{align}
see also Section~\ref{subsec:splines}. %$\widehat\KK^\gamma_i=(t^\gamma_{i,j})_{j=0}^{N^\gamma_i+p^\gamma_i}$ is a non-decreasing vector in $[0,1]$ such that $t^\gamma_{i,0}=\dots=t^\gamma_{i,p^\gamma_i}=0$, $t^\gamma_{i,N^\gamma_i}=\dots=t^\gamma_{i,N^\gamma_i+p^\gamma_i} = 1$ and $\# t^\gamma_{i,j}\le p^\gamma_i$ for $j=p^\gamma_i+1,\dots,N^\gamma_i-1$.
Let $\widehat\BB^\gamma$ be the corresponding tensor-product B-spline basis,
i.e., 
\begin{align}
\widehat\BB^\gamma=\set{\widehat{B}^\gamma_{j_1,j_2}}{j_i\in\{0,\dots,N_i^\gamma-1\}}, 
\end{align}
where $\widehat{B}^\gamma_{j_1,j_2}$ is defined as in \eqref{eq:B-spline}.
%where for $(s_1,s_2)\in \R^2$
%\begin{align}
%\widehat{B}^\gamma_{j_1,j_2}(s_1,s_2):= \widehat{B}^\gamma(s_1|t_{1,j_1}^\gamma,\dots,t_{1,j_1+p^\gamma_1+1}^\gamma) \otimes \widehat{B}^\gamma(s_2|t_{2,j_2}^\gamma,\dots,t_{2,j_2+p^\gamma_2+1}^\gamma), %\dots \widehat{B}^\gamma(s_d|t_{d,j_d}^k,\dots,t_{d,j_d+p_d+1}^k)
%\end{align}
%where $\widehat{B}^\gamma(\cdot|t_{i,j_i}^\gamma,\dots,t_{i,j_i+p^\gamma_i+1}^\gamma)$ denotes the one-dimensional B-spline corresponding to the local knot vector $(t_{i,j_i}^\gamma,\dots,t_{i,j_i+p^\gamma_i+1}^\gamma)$.
For given points in the plane $c^\gamma_{j_1,j_2}\in\R^2$ and positive weights $w^\gamma_{j_1,j_2}>0$ for $j_i\in\{0,\dots,N_i^\gamma-1\}$, a NURBS-surface $\Omega$ is  defined via a parametrization $\gamma:\widehat\Omega\to\Omega$ of the form
\begin{align}
\gamma(s_1,s_2) = \frac{\sum\limits_{\substack{i\in\{1,2\}\\j_i\in\{0,\dots,N_i^\gamma-1\}}}w^\gamma_{j_1,j_2}
c^\gamma_{j_1,j_2}\widehat{B}^\gamma_{j_1,j_2}(s_1,s_2)}{\sum\limits_{\substack{i\in\{1,2\}\\j_i\in\{0,\dots,N_i^\gamma-1\}}}w^\gamma_{j_1,j_2} \widehat{B}^\gamma_{j_1,j_2}(s_1,s_2)}.
\end{align}

\subsection{Square} 
\label{subsec:square}
In the first experiment, we consider the unit square 
$\Omega = (0,1)^2$, where  we choose $p^\gamma_1,p^\gamma_2=1$ and $\widehat\KK^\gamma_1=\widehat\KK^\gamma_2 = (0,0,1,1)$. We set the control points
\begin{align*}
c^\gamma_{1,1} = (0,0),~ c^\gamma_{2,1} = (1,0),~ c^\gamma_{1,2} = (0,1),~ c^\gamma_{2,2} = (1,1)
\end{align*} and all weights equal to $1$. We choose $f$ as in \cite[Example 7.4]{garau16} such  that the exact solution of \eqref{eq:Poisson} is given by $u(x_1,x_2)=x_1^{2.3}(1-x_1)x_2^{2.9}(1-x_2)$, which implies $u\in H^2(\Omega)\setminus H^3(\Omega).$ To create the initial ansatz space with spline degrees $p_1=p_2 \in \{2,3,4\}$, we choose the initial knot vectors $\widehat\KK^0_1=\widehat\KK^0_2=(0,\dots 0,1,\dots,1)$, where the multiplicity of $0$ and $1$ is $p_1+1=p_2+1$.
We apply Algorithm~\ref{the algorithm} for the spline degrees $2,3$, and $4$ with uniform ($\theta=1$) and  adaptive ($\theta=0.5$) mesh-refinement.
Some adaptively generated hierarchical meshes are shown in Figure~\ref{fig:square_mesh}.
In Figure~\ref{fig:square}, we plot  the energy error $\norm{\nabla u-\nabla U_\ell}{L^2(\Omega)}$ as well as the error estimator $\eta_\ell$ against the number of elements $\#\TT_\ell$.
Due to the regularity  of $u$, we obtain a suboptimal convergence rate  for $p_1=p_2>2$. 
The adaptive strategy on the other hand recovers the optimal convergence of the
energy error and the error estimator, i.e.,  $\mathcal{O}((\# \TT_\ell)^{-p/2})$.

\subsection{L-shape} 
\label{subsec:L-shape}
To obtain the L-shaped domain 
$\Omega = (0,1)^2 \setminus ([0,0.5] \times [0,0.5])$, we choose 
 $p^\gamma_1,p^\gamma_2=1$ and $\widehat\KK^\gamma_1 = (0,0,0.5,1,1),~\widehat\KK^\gamma_2 = (0,0,1,1)$. 
 Moreover, we choose the control points 
 \begin{align*}
c^\gamma_{1,1} &= (0,0.5),~ c^\gamma_{2,1} = (0.5,0.5),~
c^\gamma_{3,1} = (0.5,0),\\ c^\gamma_{1,2} &= (0,1),~ c^\gamma_{2,2} = (1,1),~
c^\gamma_{3,2} = (1,0),
\end{align*}
and set all weights equal to $1$. We consider the Poisson problem \eqref{eq:Poisson} with $f=1$. For the initial ansatz space with spline degrees $p_1=p_2 \in \{2,3,4\}$, we choose the initial knot vectors $\widehat\KK^0_1=(0,\dots,0,0.5,\dots,0.5,1,\dots,1)$ and $\widehat\KK^0_2=(0,\dots,0,1\dots,1)$, where  the multiplicity of $0$ and $1$ is $p_1+1=p_2+1$, whereas the multiplicity of $0.5$ is $p_1$.
%However, we raise the multiplicity of the inner knot $0.5$ in $\widehat\KK^0_1$ to the maximal multiplicity, that is $p_1$. 
As a consequence, the ansatz functions are only continuous at $\{0.5\}\times[0,1]$,  but not continuously differentiable.
We compare uniform  ($\theta=1$) and adaptive ($\theta=0.4$) mesh-refinement.
 In Figure~\ref{fig:Lshape_mesh}, one can see some adaptively generated hierarchical meshes.
 In Figure~\ref{fig:Lshape}
we plot again   the energy error $\norm{\nabla u-\nabla U_\ell}{L^2(\Omega)}$ and the error estimator $\eta_\ell$ against the  number of elements $\#\TT_\ell$.  
The uniform approach leads to a suboptimal convergence rate, since the reentrant corner at  $(0.5,0.5)$ causes a generic singularity of the solution $u$.
 However, the adaptive strategy recovers the optimal convergence rate $\mathcal{O}((\# \TT_\ell)^{-p/2})$.

\subsection{Quarter ring}\label{subsec:ring}
We construct the NURBS-surface  given in polar coordinates
$\Omega = \{(r,\varphi)|~0.5 < r < 1~\wedge~0<\varphi<\pi/2\}$ by choosing
$p^\gamma_1=2,p^\gamma_2=1$ and $\widehat\KK^\gamma_1 = (0,0,0,1,1,1),~\widehat\KK^\gamma_2 = (0,0,1,1)$. 
Moreover, we choose the control points \begin{align*}
c^\gamma_{1,1} &= (0,0.5),~ c^\gamma_{2,1} = (0.5,0.5),~
c^\gamma_{3,1} = (0.5,0),\\ 
c^\gamma_{1,2} &= (0,1),~ c^\gamma_{2,2} = (1,1),~
c^\gamma_{3,2} = (1,0),
\end{align*}
and set all weights,  except $w^\gamma_{2,1},w^\gamma_{2,2} := 1/\sqrt{2}$, equal to $1$. 
As right-hand side in \eqref{eq:Poisson},  we choose the indicator function $f=\chi_S$, where $S=\{(r,\varphi)|~0.5 < r < 0.75~\wedge~0<\varphi<\pi/4\} = \gamma([0.5,1]\times[0,0.5])$. 
For the initial ansatz space with spline degrees $p_1=p_2 \in \{2,3,4\}$, we choose  the initial knot vectors $\widehat\KK^0_1=\widehat\KK^0_2=(0,\dots,0,0.5,\dots,0.5,1\dots,1)$,  where  the multiplicity of $0$ and $1$ is $p_1+1=p_2+1$, whereas the multiplicity of $0.5$ is $p_1=p_2$.
As a consequence the ansatz functions are only continuous at $\{0.5\}\times[0,1]$ and $[0,1]\times\{0.5\}$. 
We  compare uniform ($\theta=1$)   and adaptive ($\theta=0.8$) mesh-refinement.
Some adaptively generated hierarchical meshes are shown  in Figure~\ref{fig:bendshape_mesh}.
In Figure~\ref{fig:bendshape}, we plot  the energy error $\norm{\nabla u-\nabla U_\ell}{L^2(\Omega)}$ and the error estimator $\eta_\ell$ against the  number of elements $\#\TT_\ell$.
For $p_1=p_2>2$, uniform refinement leads to suboptimal convergence rate.
However, the adaptive approach regains the  optimal rate $\mathcal{O}((\#\TT_\ell)^{-p/2})$.

\begin{figure}
\centering

\includegraphics[width=0.32\textwidth]{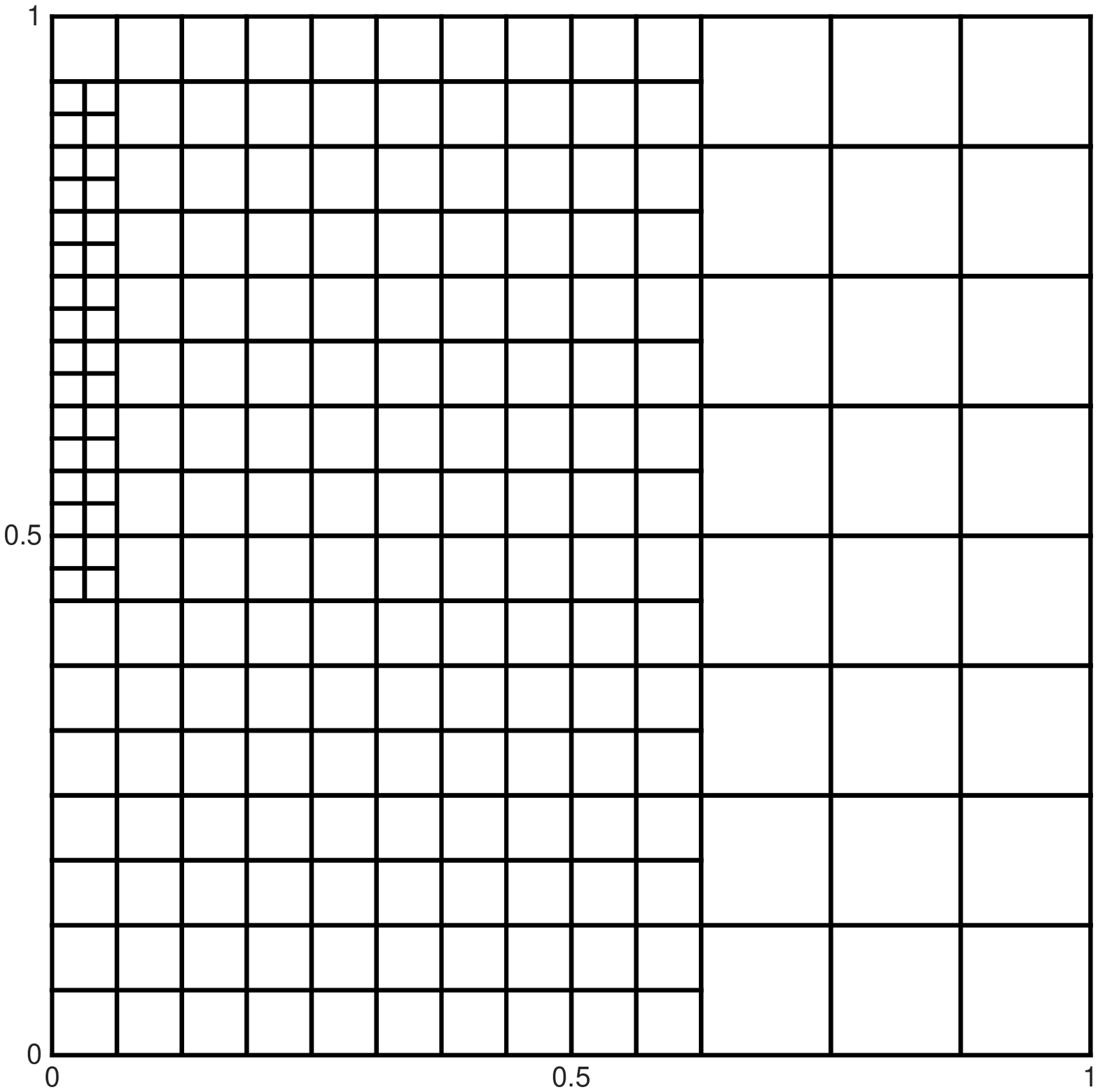}
\includegraphics[width=0.32\textwidth]{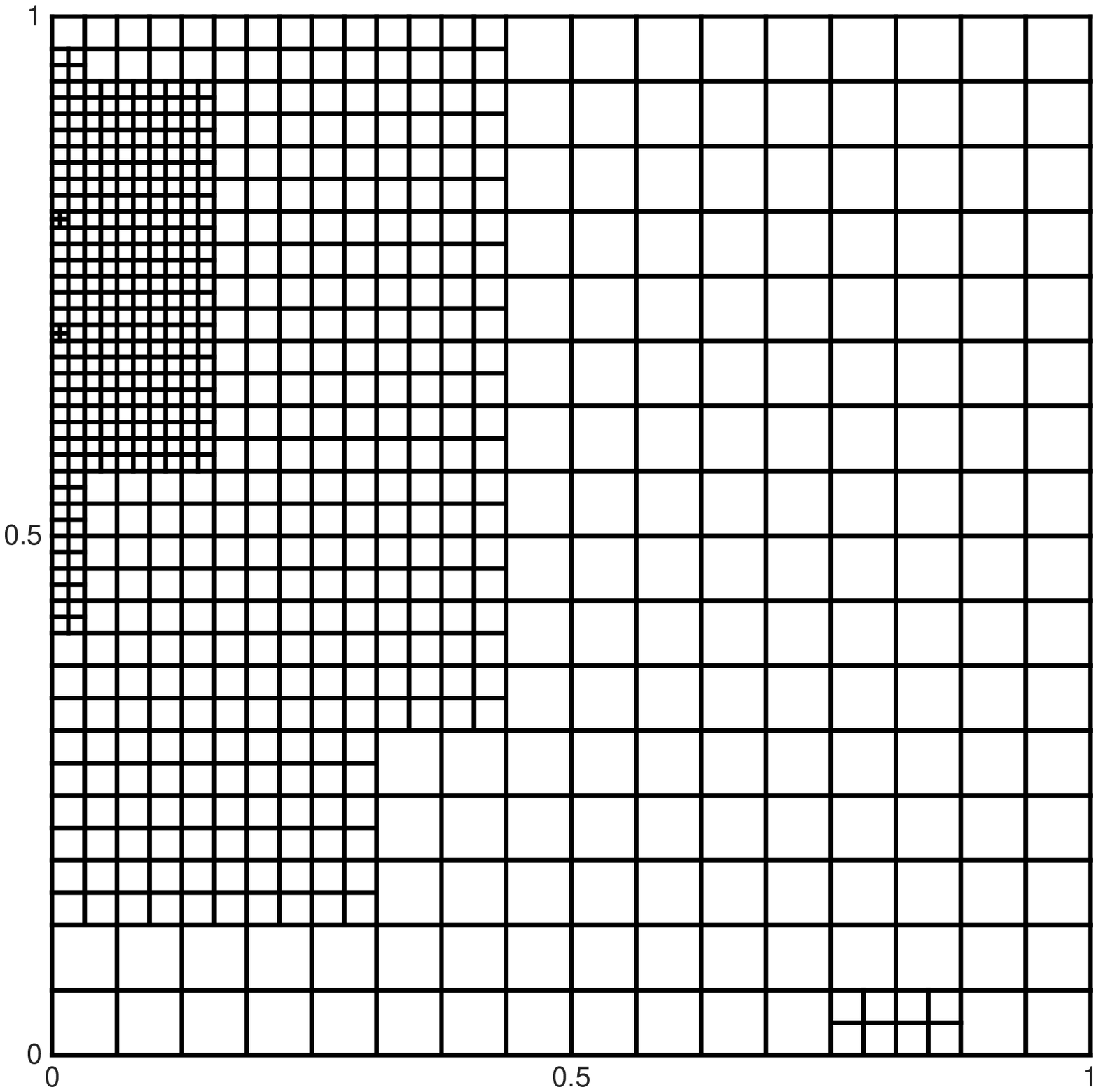}
\includegraphics[width=0.32\textwidth]{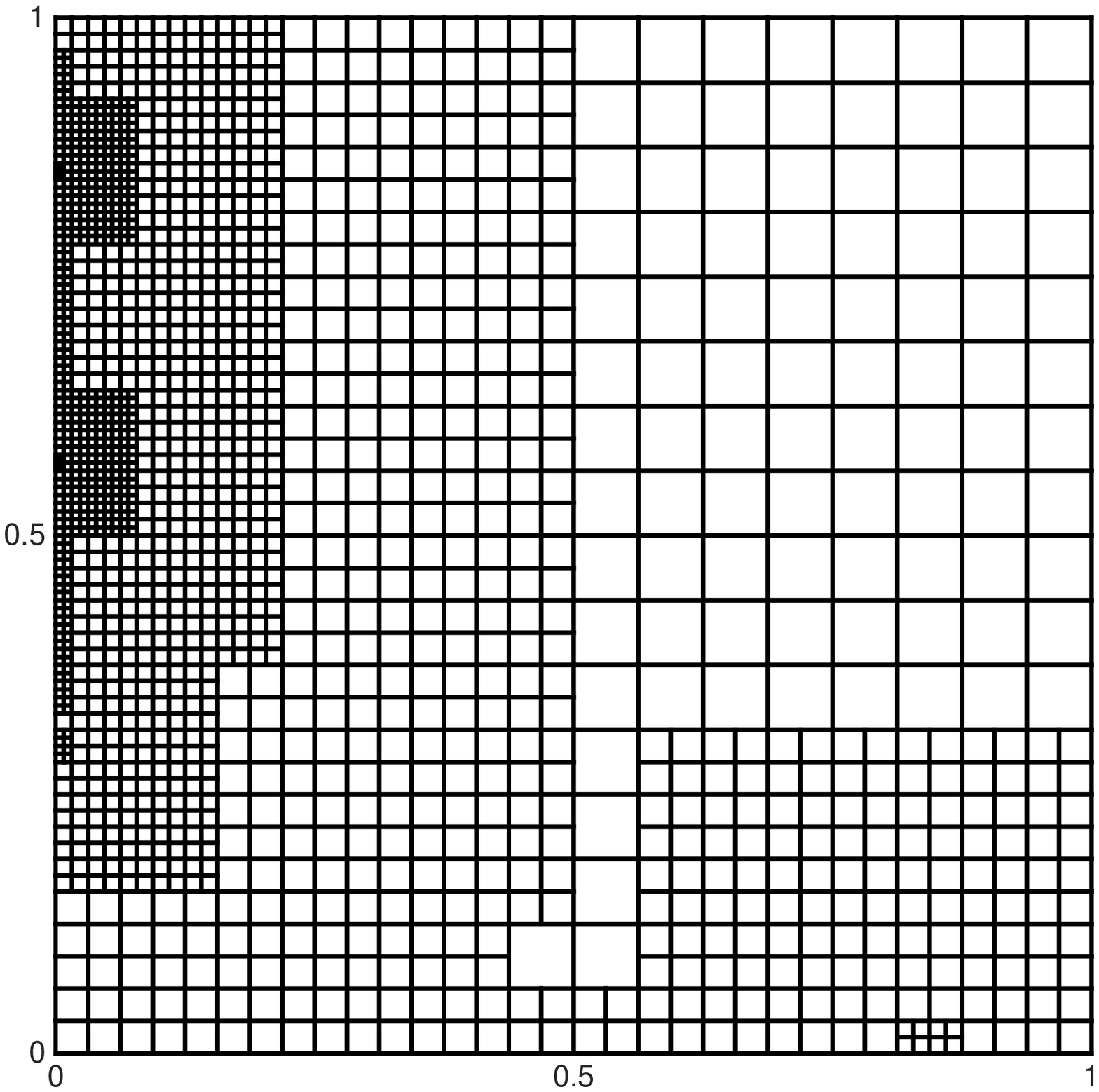}
\begin{minipage}{0.32\textwidth}
\begin{center}$\#\TT_6=208$\end{center}
\end{minipage}
\begin{minipage}{0.32\textwidth}
\begin{center}$\#\TT_8=742$\end{center}
\end{minipage}
\begin{minipage}{0.32\textwidth}
\begin{center}$\#\TT_{10}=1633$\end{center}
\end{minipage}

\caption{Hierarchical meshes generated by  Algorithm~\ref{the algorithm} with $\theta=0.5$ for the problem of Section~\ref{subsec:square}   for $p_1=p_2=4$. 
}
\label{fig:square_mesh}
\end{figure}

\begin{figure}%square
%text
\psfrag{ON-2}[r][r]{\hspace{-10pt}\scalebox{0.5}{$\tiny \mathcal{O}(\#\mathcal{T_\ell}^{-2/2})$}}
\psfrag{ON-3}[r][r]{\hspace{-10pt}\scalebox{0.5}{$\tiny \mathcal{O}(\#\mathcal{T_\ell}^{-3/2})$}}
\psfrag{ON-4}[r][r]{\hspace{-10pt}\scalebox{0.5}{$\tiny \mathcal{O}(\#\mathcal{T_\ell}^{-4/2})$}}

%title
\psfrag{Square, degree=2}{}
\psfrag{Square, degree=3}{}
\psfrag{Square, degree=4}{}

%axes
\psfrag{Number of elements}{\hspace{-24pt}\fontsize{5pt}{6pt}\selectfont number of elements $\#\TT_\ell$}
\psfrag{Energy error/Error estimator}{\hspace{-20pt}\fontsize{5pt}{6pt}\selectfont energy error/error estimator}

%legend
\psfrag{Error uniform}{\fontsize{5pt}{6pt}\selectfont err. unif.}
\psfrag{Error adaptive}{\fontsize{5pt}{6pt}\selectfont err. adap.}
\psfrag{Estimator uniform}{\fontsize{5pt}{6pt}\selectfont est. unif.}
\psfrag{Estimator adaptive}{\fontsize{5pt}{6pt}\selectfont est. adap.}

\centering
 
\includegraphics[width=0.32\textwidth]{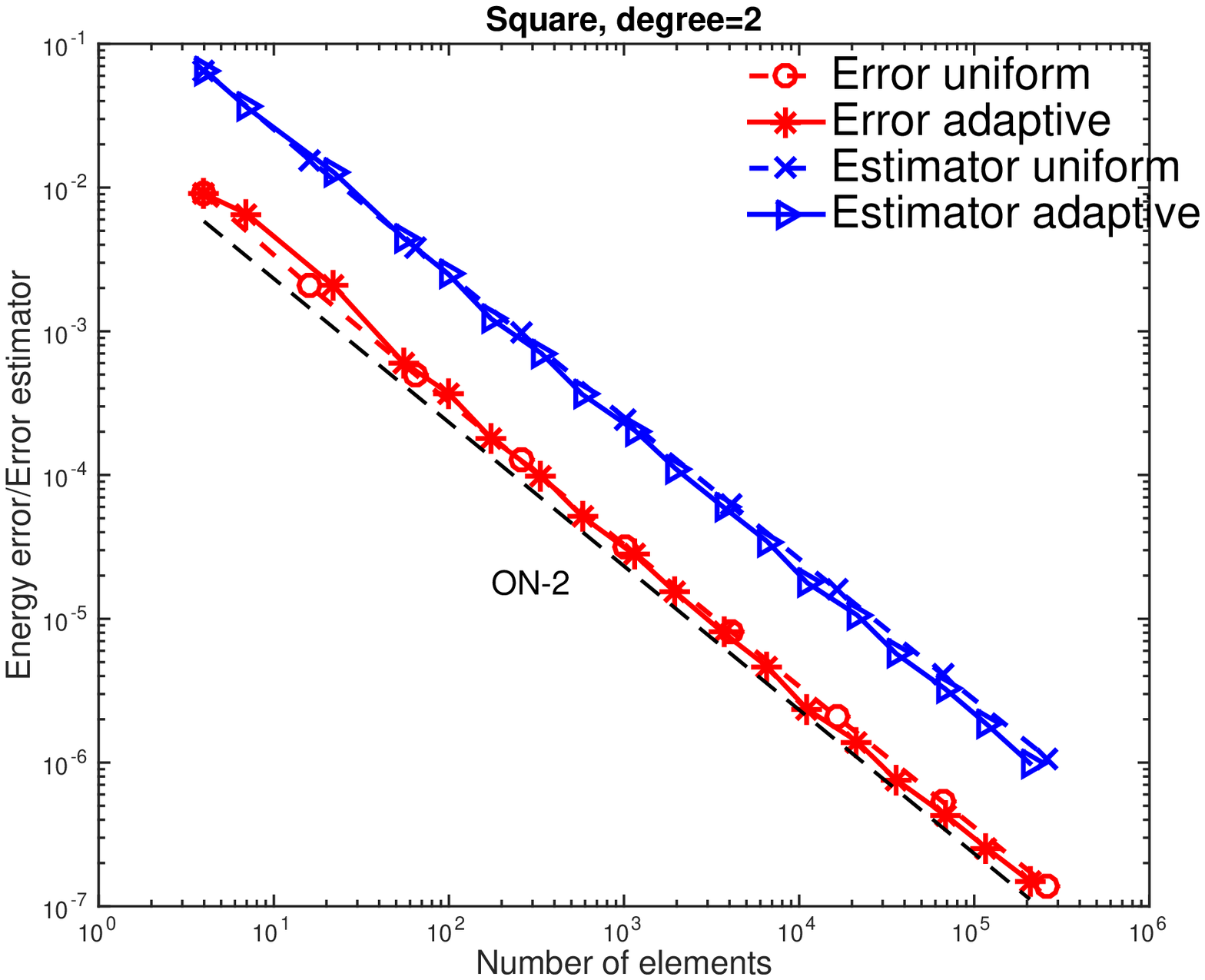}
\includegraphics[width=0.32\textwidth]{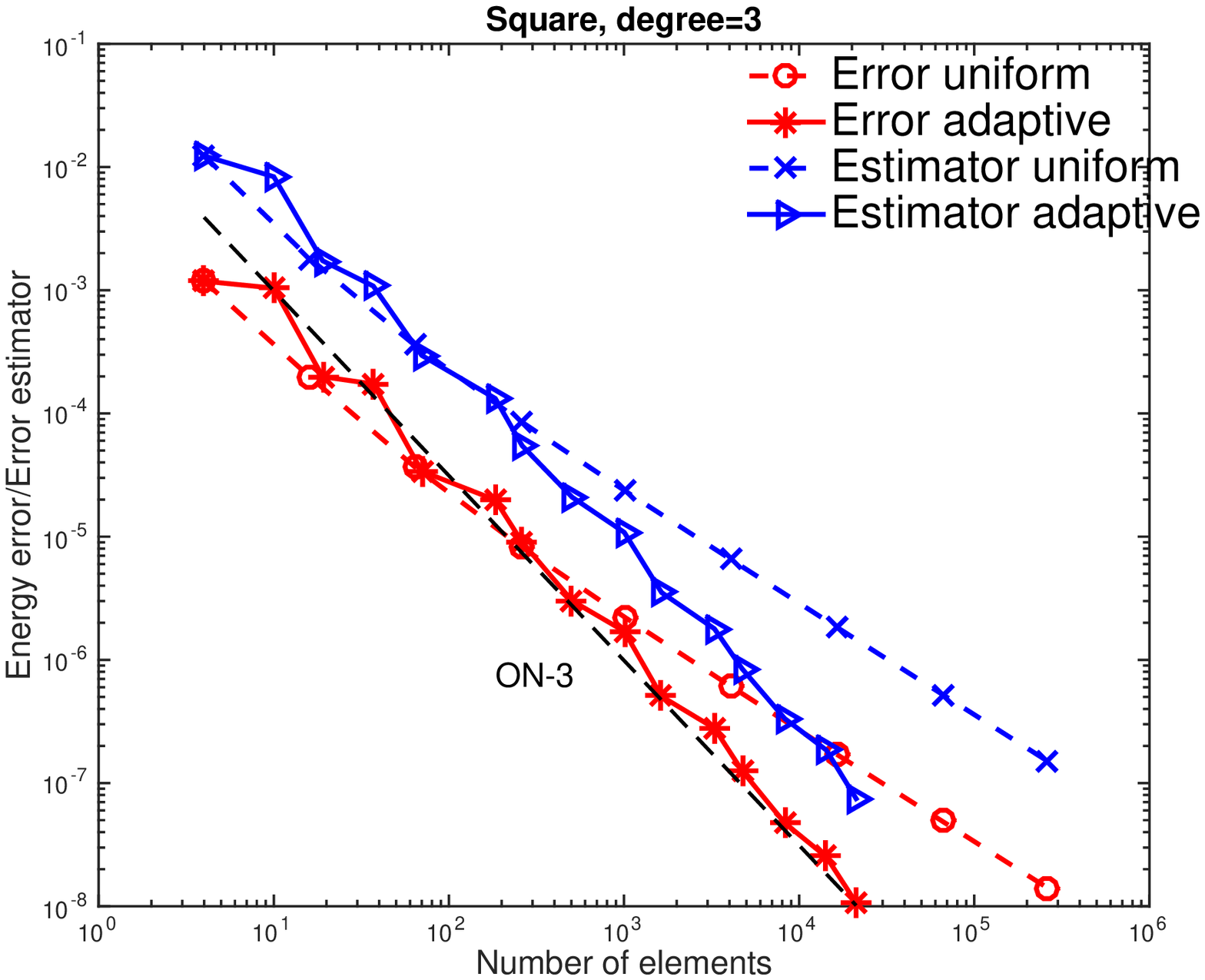}
\includegraphics[width=0.32\textwidth]{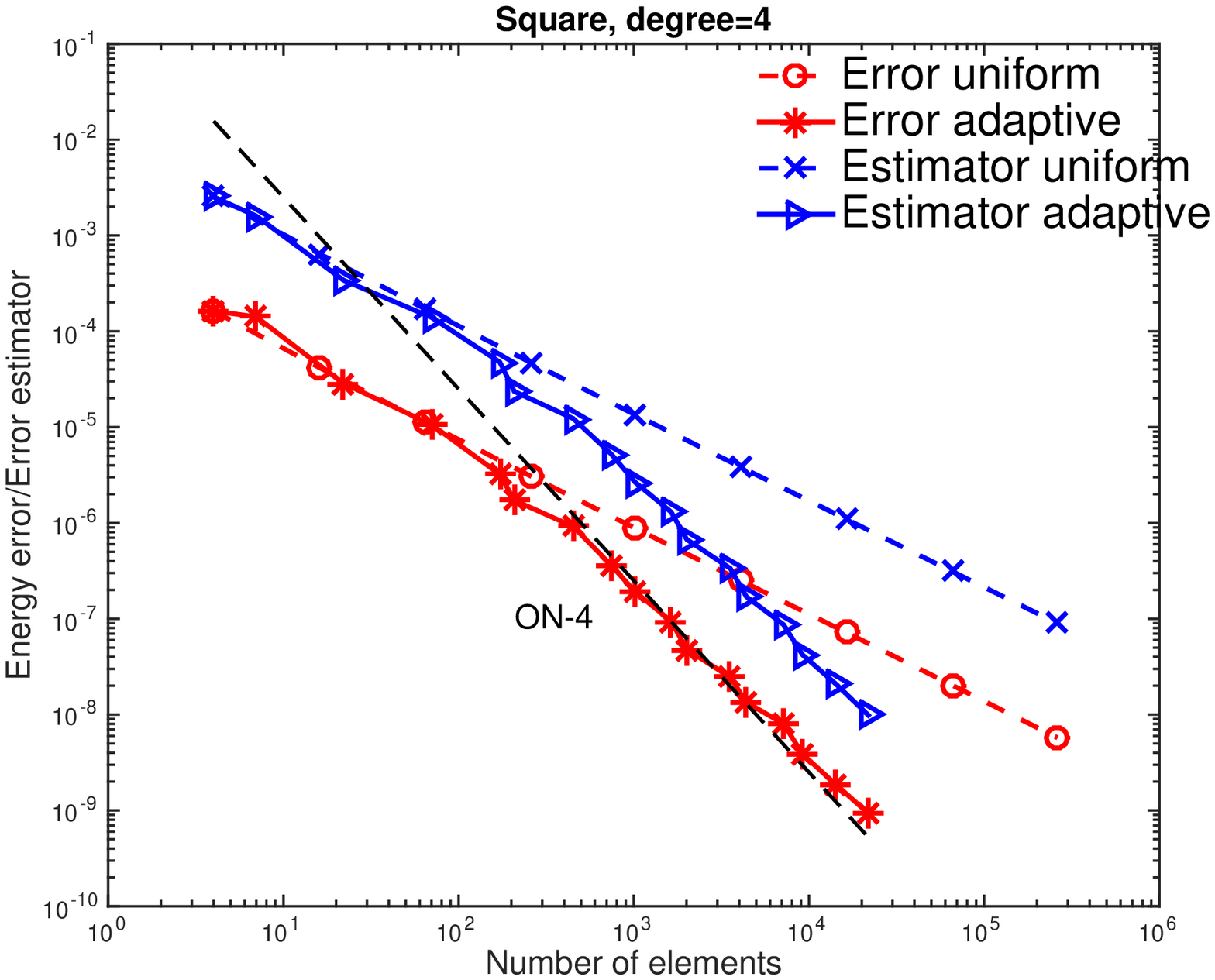}
\begin{minipage}{0.32\textwidth}
\begin{center}$p=2$\end{center}
\end{minipage}
\begin{minipage}{0.32\textwidth}
\begin{center}$p=3$\end{center}
\end{minipage}
\begin{minipage}{0.32\textwidth}
\begin{center}$p=4$\end{center}
\end{minipage}

\caption{
Energy error $\norm{\nabla u-\nabla U_\ell}{L^2(\Omega)}$ and  error estimator $\eta_\ell$ for the problem of Section~\ref{subsec:square} on the unit square for uniform ($\theta=1$) and adaptive ($\theta=0.5$) mesh-refinement and different spline orders $p_1=p_2=p\in\{2,3,4\}$, where adaptivity always regains the respective optimal convergence rate.
}
\label{fig:square} 
\end{figure}

\begin{figure}
\centering

\includegraphics[width=0.32\textwidth]{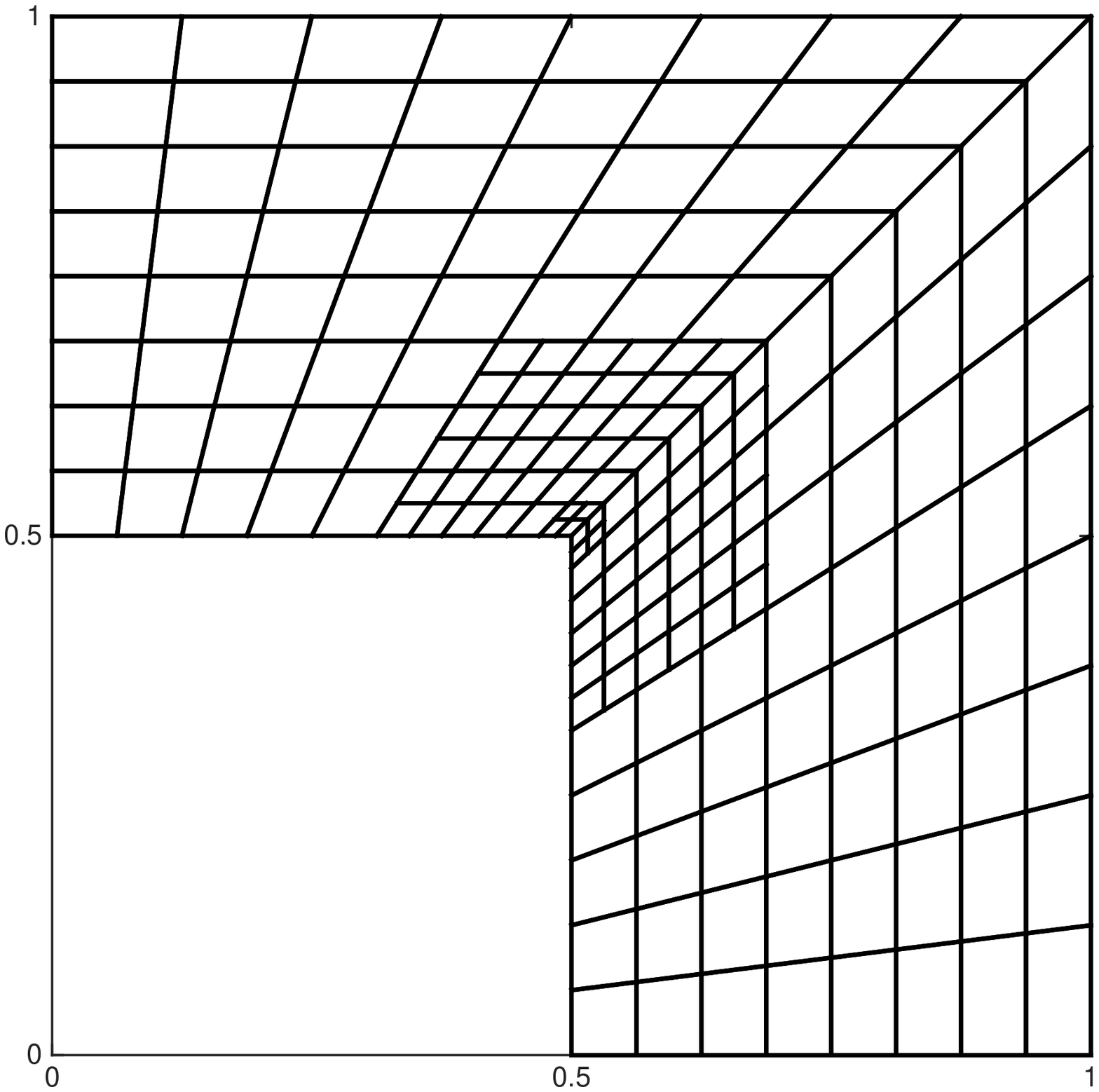}
\includegraphics[width=0.32\textwidth]{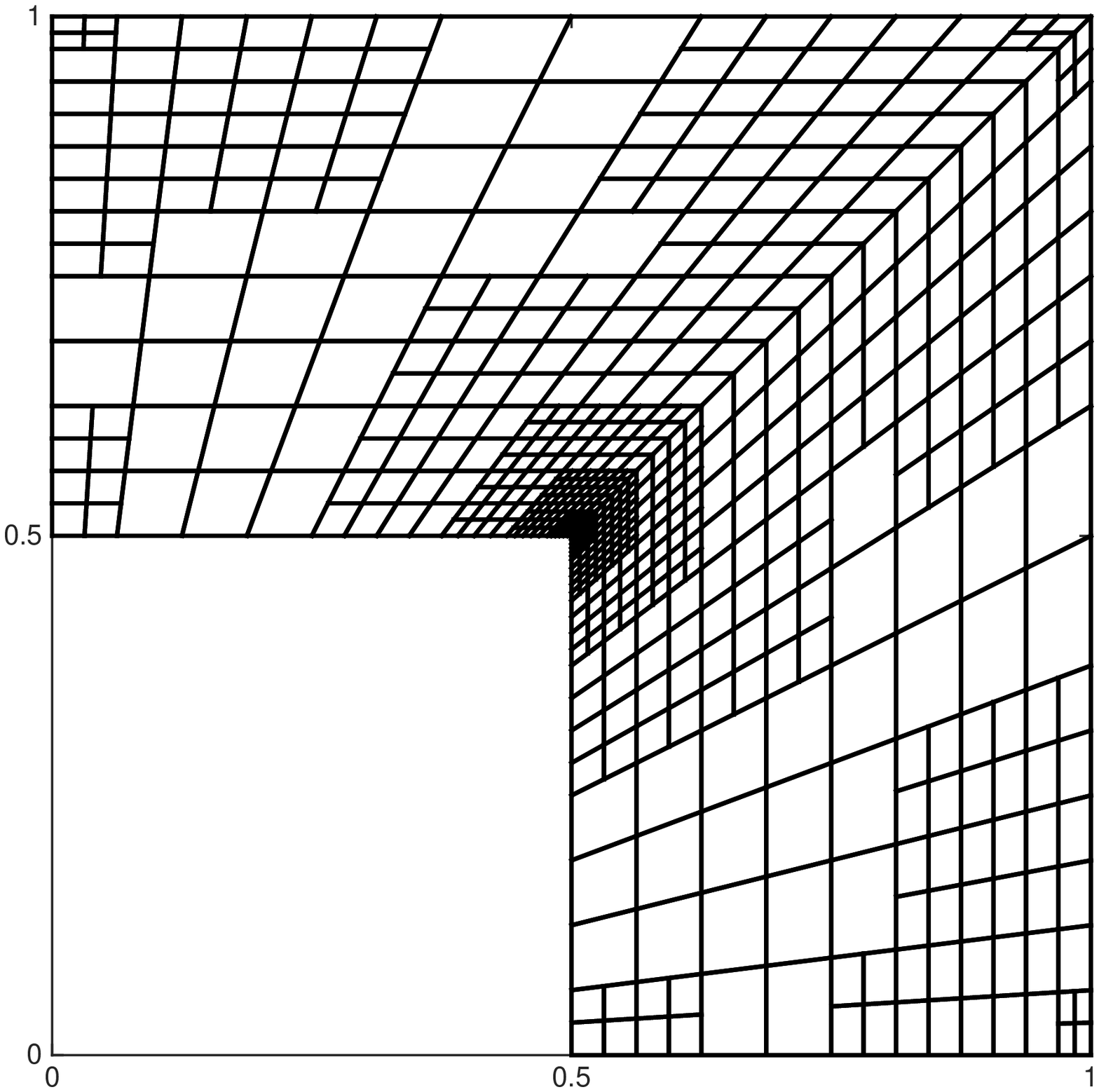}
\includegraphics[width=0.32\textwidth]{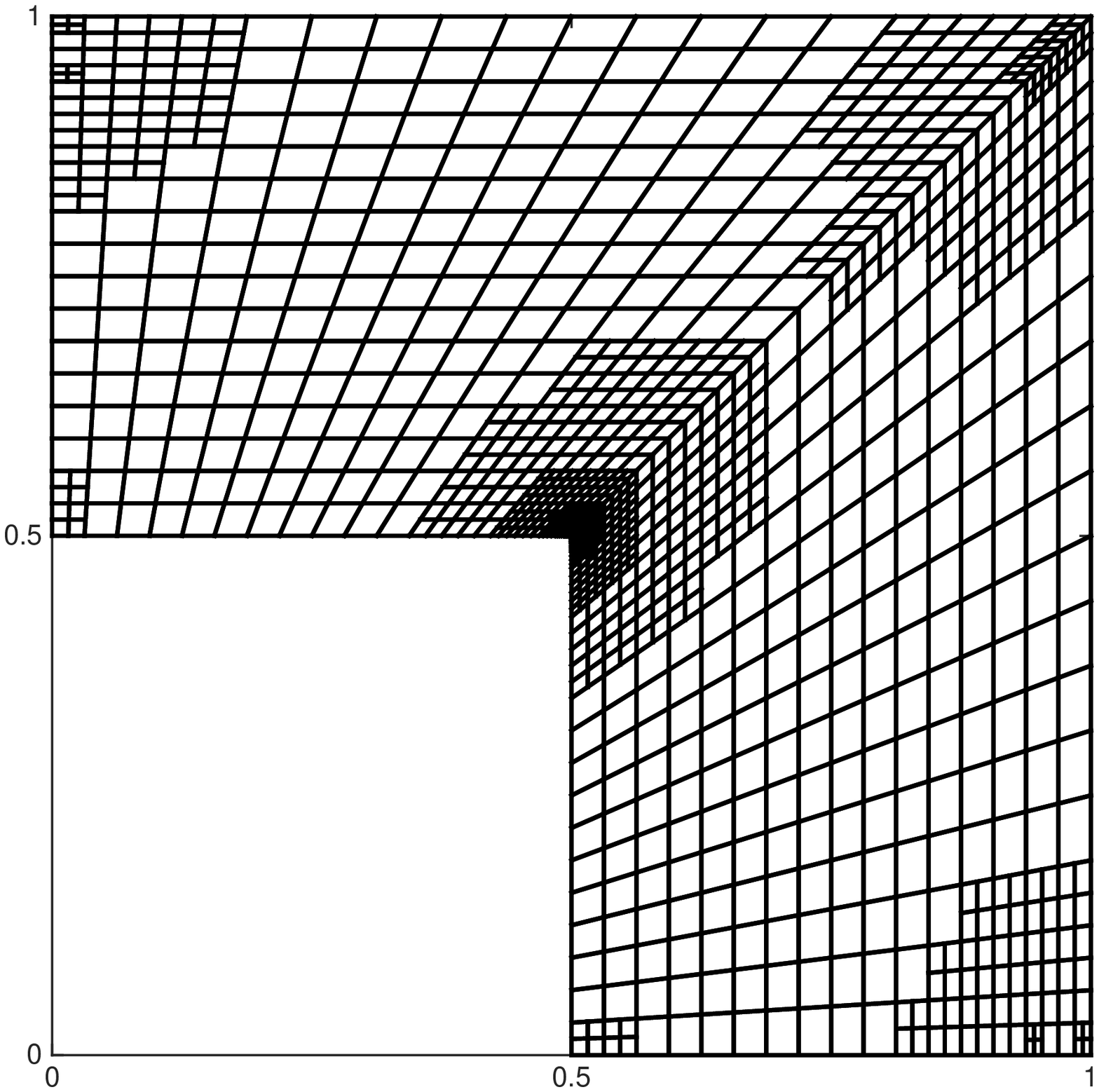}
\begin{minipage}{0.32\textwidth}
\begin{center}$\#\TT_6=188$\end{center}
\end{minipage}
\begin{minipage}{0.32\textwidth}
\begin{center}$\#\TT_9=632$\end{center}
\end{minipage}
\begin{minipage}{0.32\textwidth}
\begin{center}$\#\TT_{11}=1364$\end{center}
\end{minipage}

\caption{Hierarchical meshes generated by  Algorithm~\ref{the algorithm} with $\theta=0.4$ for the problem of Section~\ref{subsec:L-shape}   for $p_1=p_2=2$. 
}

\label{fig:Lshape_mesh}
\end{figure}

\begin{figure}%L-shape
%text
\psfrag{ON-2}[r][r]{\hspace{-10pt}\scalebox{0.5}{$\tiny \mathcal{O}(\#\mathcal{T_\ell}^{-2/2})$}}
\psfrag{ON-3}[r][r]{\hspace{-10pt}\scalebox{0.5}{$\tiny \mathcal{O}(\#\mathcal{T_\ell}^{-3/2})$}}
\psfrag{ON-4}[r][r]{\hspace{-10pt}\scalebox{0.5}{$\tiny \mathcal{O}(\#\mathcal{T_\ell}^{-4/2})$}}

%title
\psfrag{L-shape, degree=2}{}%{\hspace{-18pt}\fontsize{5pt}{6pt}\selectfont L-shape, $p_1=p_2=2$}
\psfrag{L-shape, degree=3}{}%{\hspace{-18pt}\fontsize{5pt}{6pt}\selectfont L-shape, $p_1=p_2=3$}
\psfrag{L-shape, degree=4}{}%{\hspace{-18pt}\fontsize{5pt}{6pt}\selectfont L-shape, $p_1=p_2=4$}

%axes
\psfrag{Number of elements}{\hspace{-24pt}\fontsize{5pt}{6pt}\selectfont number of elements $\#\TT_\ell$}
\psfrag{Energy error/Error estimator}{\hspace{-20pt}\fontsize{5pt}{6pt}\selectfont energy error/error estimator}

%legend
\psfrag{Error uniform}{\fontsize{5pt}{6pt}\selectfont err. unif.}
\psfrag{Error adaptive}{\fontsize{5pt}{6pt}\selectfont err. adap.}
\psfrag{Estimator uniform}{\fontsize{5pt}{6pt}\selectfont est. unif.}
\psfrag{Estimator adaptive}{\fontsize{5pt}{6pt}\selectfont est. adap.}

\centering

\includegraphics[width=0.32\textwidth]{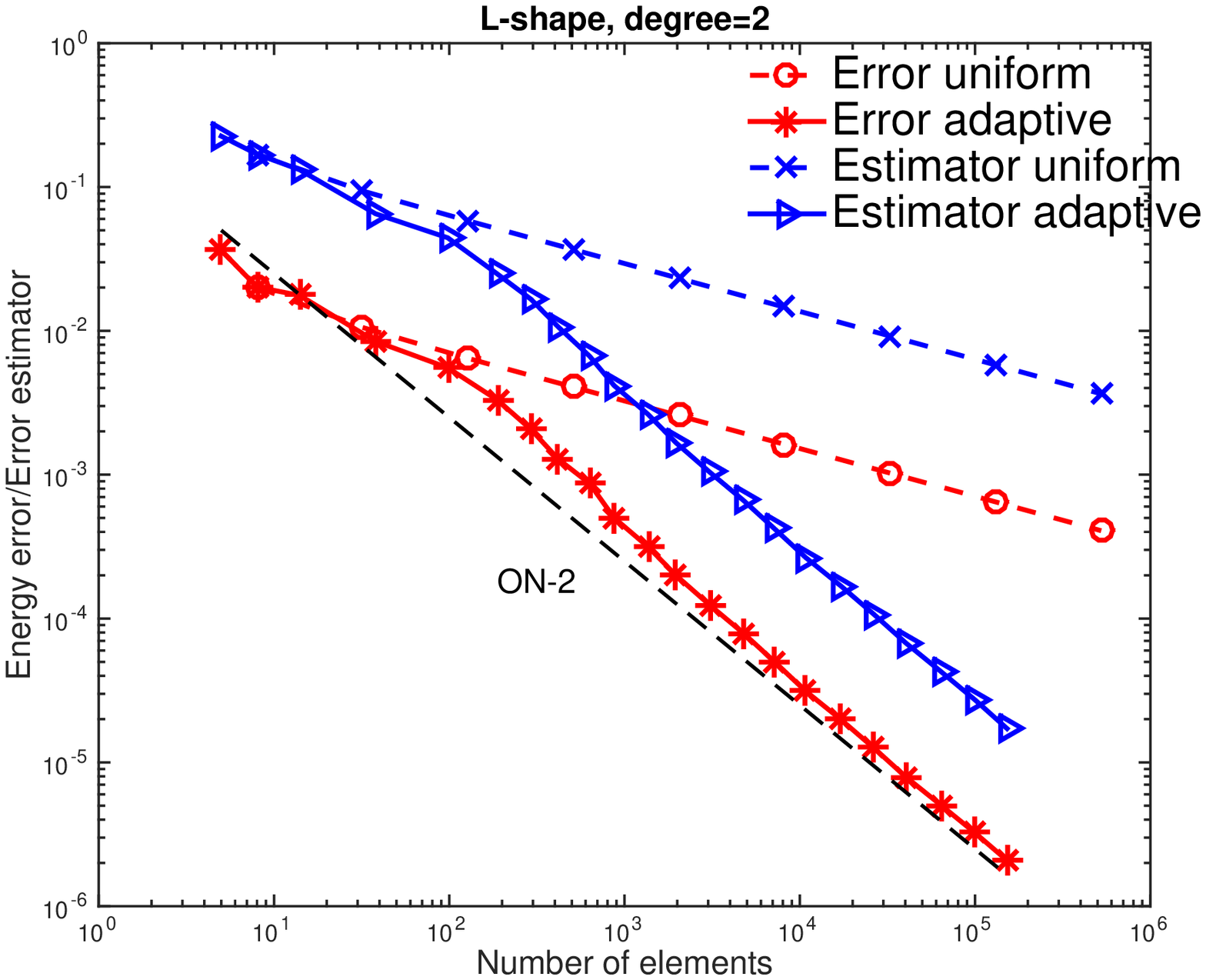}
\includegraphics[width=0.32\textwidth]{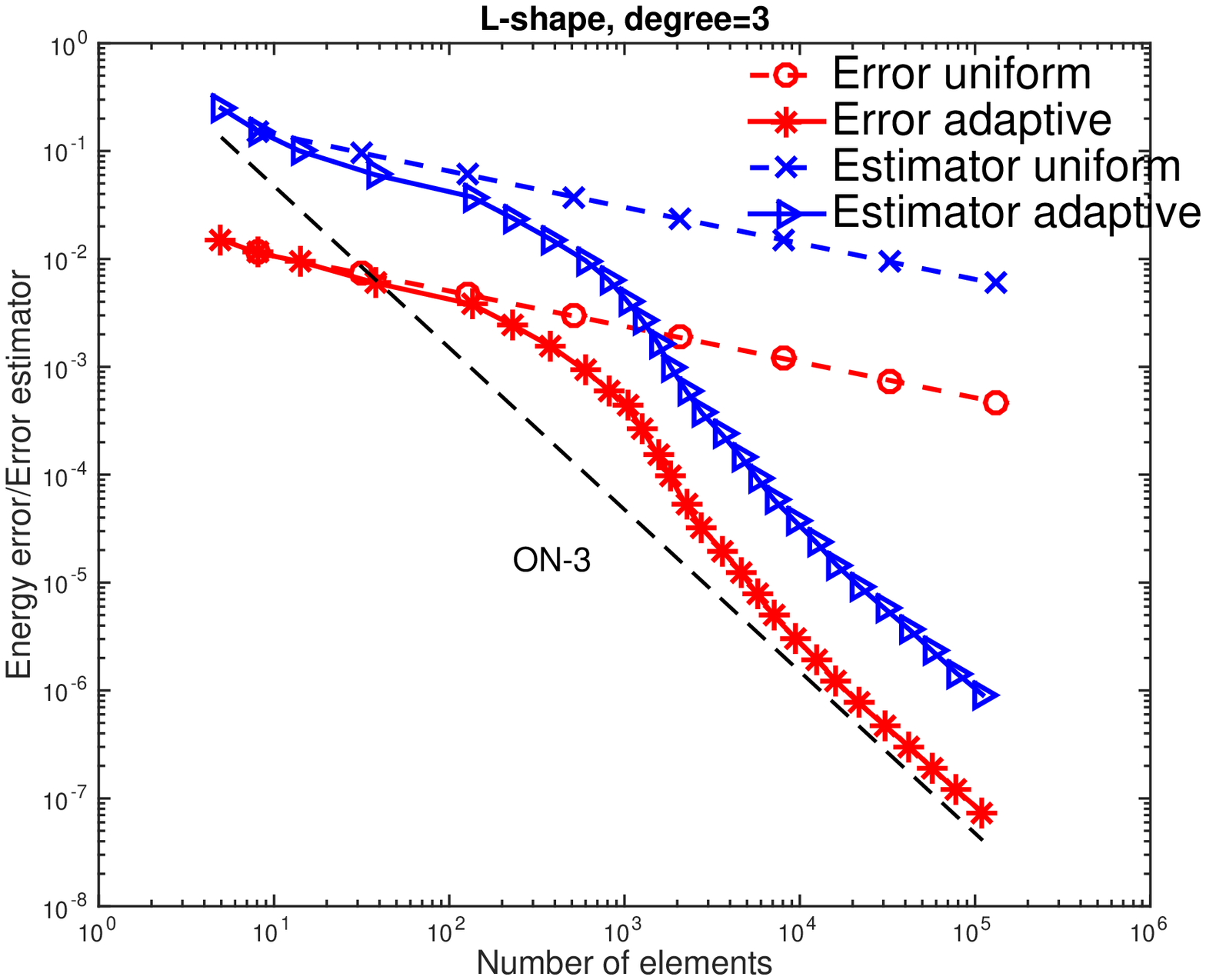}
\includegraphics[width=0.32\textwidth]{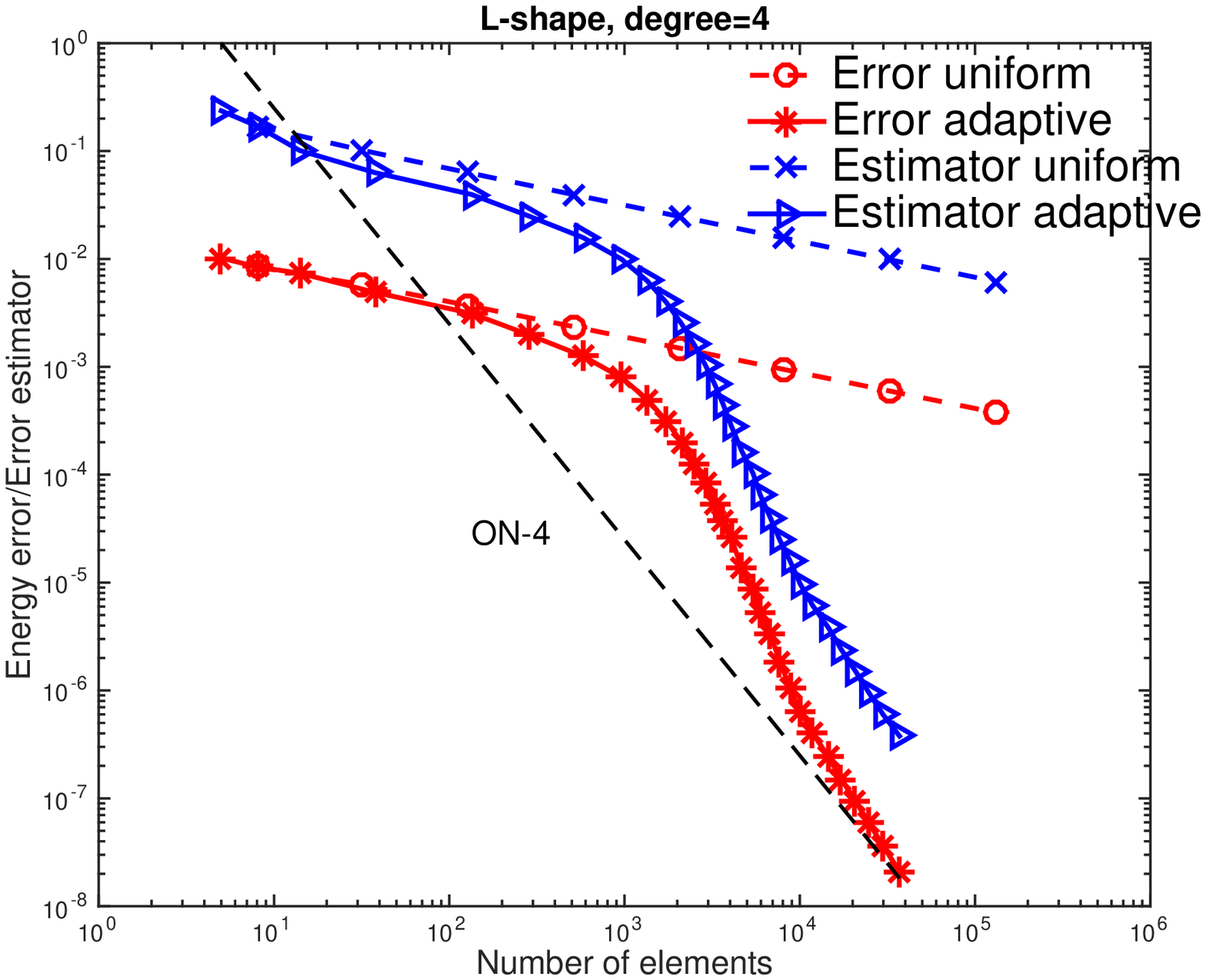}
\begin{minipage}{0.32\textwidth}
\begin{center}$p=2$\end{center}
\end{minipage}
\begin{minipage}{0.32\textwidth}
\begin{center}$p=3$\end{center}
\end{minipage}
\begin{minipage}{0.32\textwidth}
\begin{center}$p=4$\end{center}
\end{minipage}

\caption{
Energy error $\norm{\nabla u-\nabla U_\ell}{L^2(\Omega)}$ and  error estimator $\eta_\ell$ for the problem of Section~\ref{subsec:ring} on the L-shape   for uniform ($\theta=1$) and adaptive ($\theta=0.5$) mesh-refinement and different spline orders $p_1=p_2=p\in\{2,3,4\}$, where adaptivity always regains the respective optimal convergence rate.
}
\label{fig:Lshape} 
\end{figure}

\begin{figure}
\centering

\includegraphics[width=0.32\textwidth]{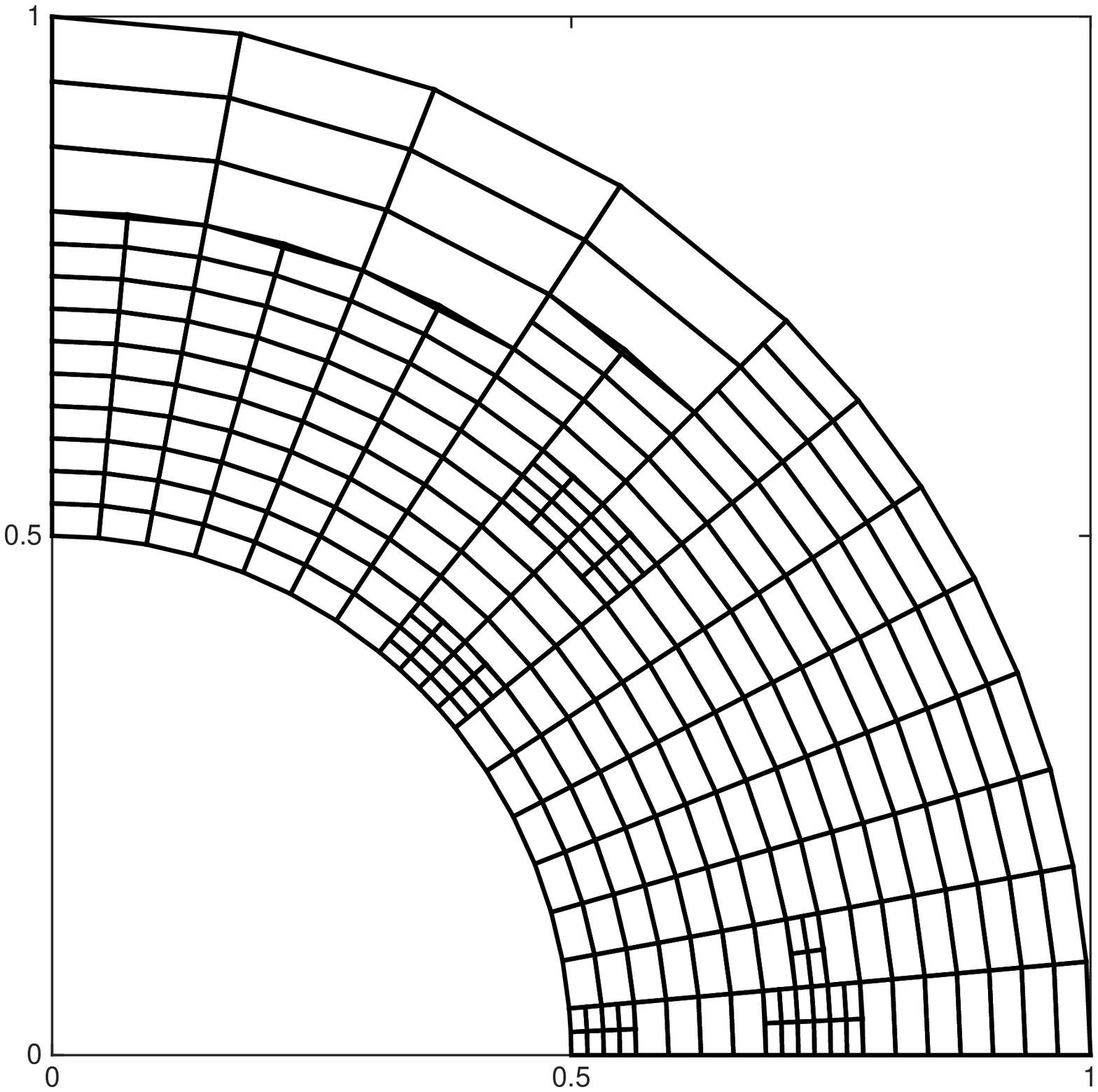}
\includegraphics[width=0.32\textwidth]{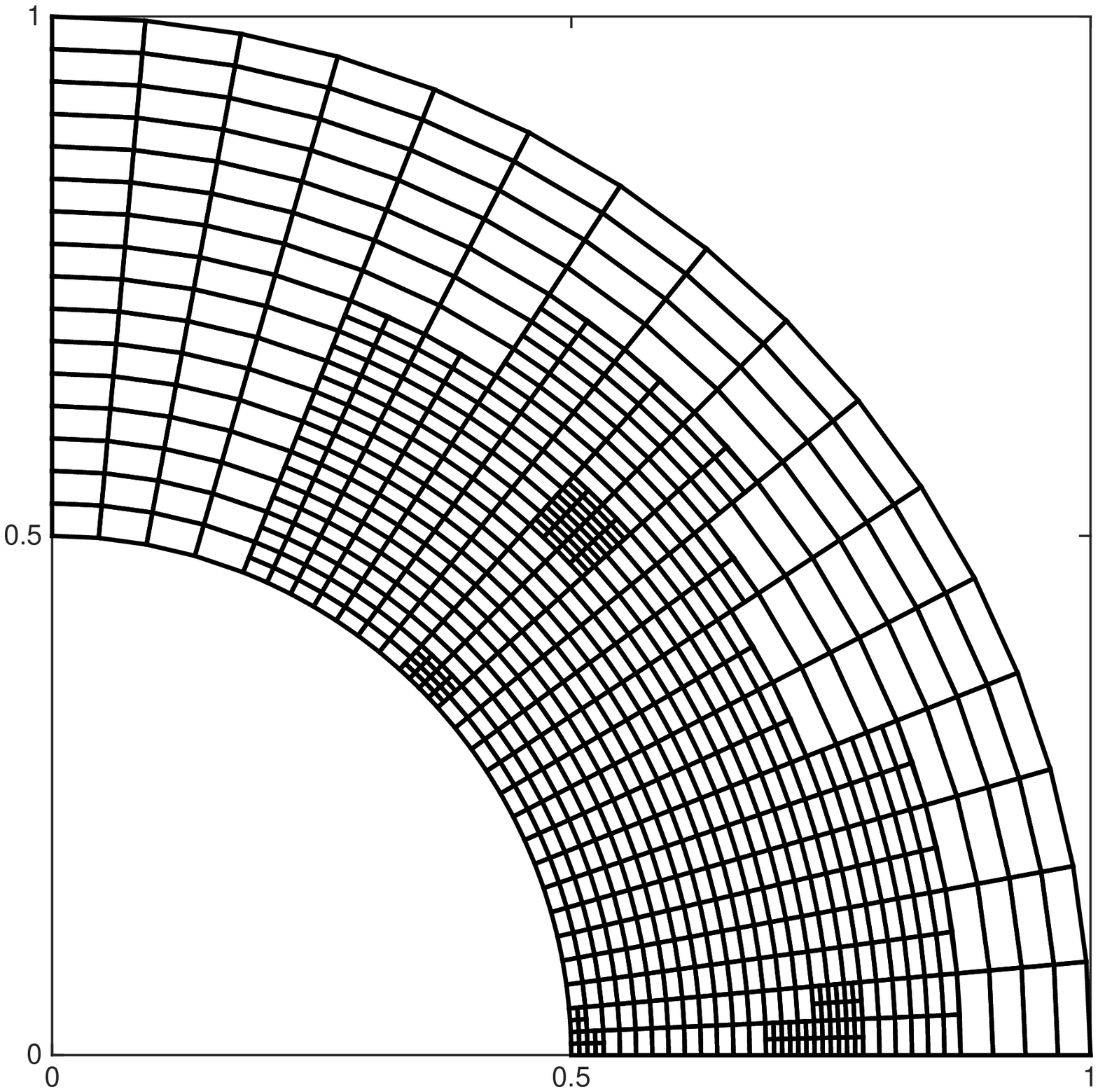}
\includegraphics[width=0.32\textwidth]{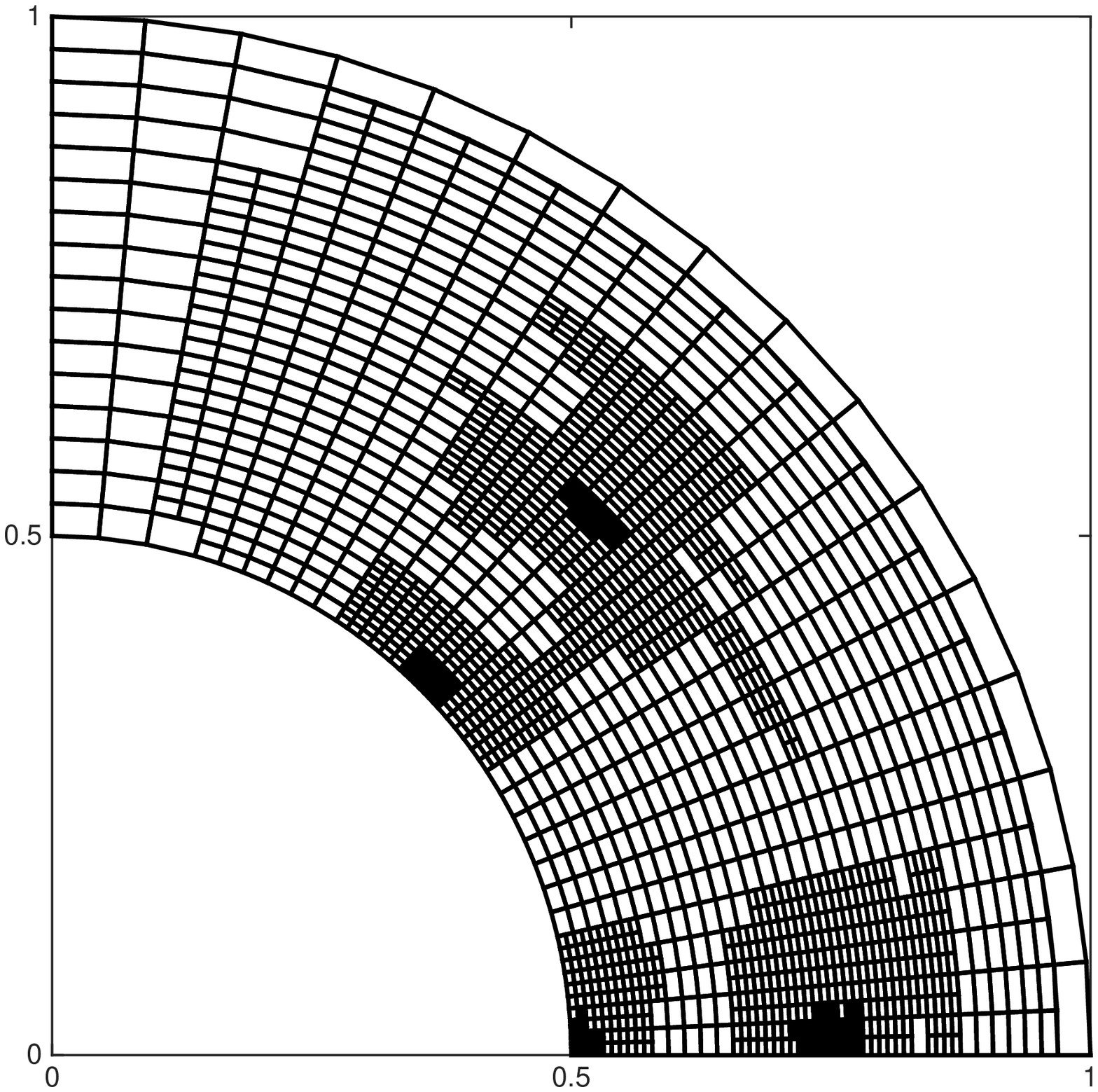}
\begin{minipage}{0.32\textwidth}
\begin{center}$\#\TT_4=265$\end{center}
\end{minipage}
\begin{minipage}{0.32\textwidth}
\begin{center}$\#\TT_5=718$\end{center}
\end{minipage}
\begin{minipage}{0.32\textwidth}
\begin{center}$\#\TT_{6}=1777$\end{center}
\end{minipage}

\caption{Hierarchical meshes generated by  Algorithm~\ref{the algorithm} with $\theta=0.5$ for the problem of Section~\ref{subsec:ring}   for $p_1=p_2=3$. 
}

\label{fig:bendshape_mesh}
\end{figure}

\begin{figure}%Quarter ring
%text
\psfrag{ON-2}[r][r]{\hspace{-10pt}\scalebox{0.5}{$\tiny \mathcal{O}(\#\mathcal{T_\ell}^{-2/2})$}}
\psfrag{ON-3}[r][r]{\hspace{-10pt}\scalebox{0.5}{$\tiny\mathcal{O}(\#\mathcal{T_\ell}^{-3/2})$}}
\psfrag{ON-4}[r][r]{\hspace{-10pt}\scalebox{0.5}{$\tiny \mathcal{O}(\#\mathcal{T_\ell}^{-4/2})$}}

%title
\psfrag{Quarter of a ring, degree=2}{}%{\hspace{-18pt}\fontsize{5pt}{6pt}\selectfont Quarter ring, $p_1=p_2=2$}
\psfrag{Quarter of a ring, degree=3}{}%{\hspace{-18pt}\fontsize{5pt}{6pt}\selectfont Quarter ring, $p_1=p_2=3$}
\psfrag{Quarter of a ring, degree=4}{}%{\hspace{-18pt}\fontsize{5pt}{6pt}\selectfont Quarter ring, $p_1=p_2=4$}

%axes
\psfrag{Number of elements}{\hspace{-24pt}\fontsize{5pt}{6pt}\selectfont number of elements $\#\TT_\ell$}
\psfrag{Energy error/Error estimator}{\hspace{-20pt}\fontsize{5pt}{6pt}\selectfont energy error/error estimator}

%legend
\psfrag{Error uniform}{\fontsize{5pt}{6pt}\selectfont err. unif.}
\psfrag{Error adaptive}{\fontsize{5pt}{6pt}\selectfont err. adap.}
\psfrag{Estimator uniform}{\fontsize{5pt}{6pt}\selectfont est. unif.}
\psfrag{Estimator adaptive}{\fontsize{5pt}{6pt}\selectfont est. adap.}

\centering 
 
\includegraphics[width=0.32\textwidth]{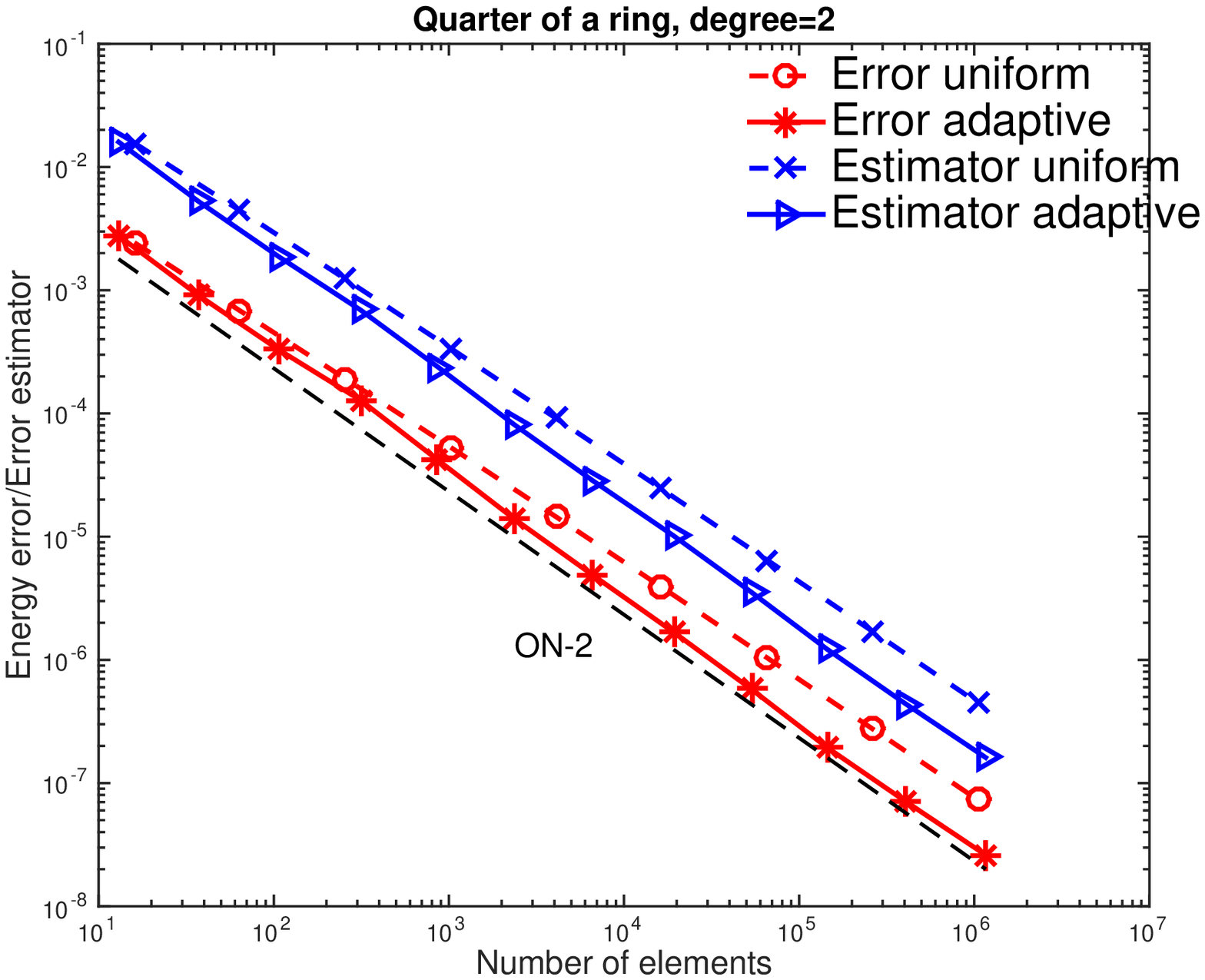}
\includegraphics[width=0.32\textwidth]{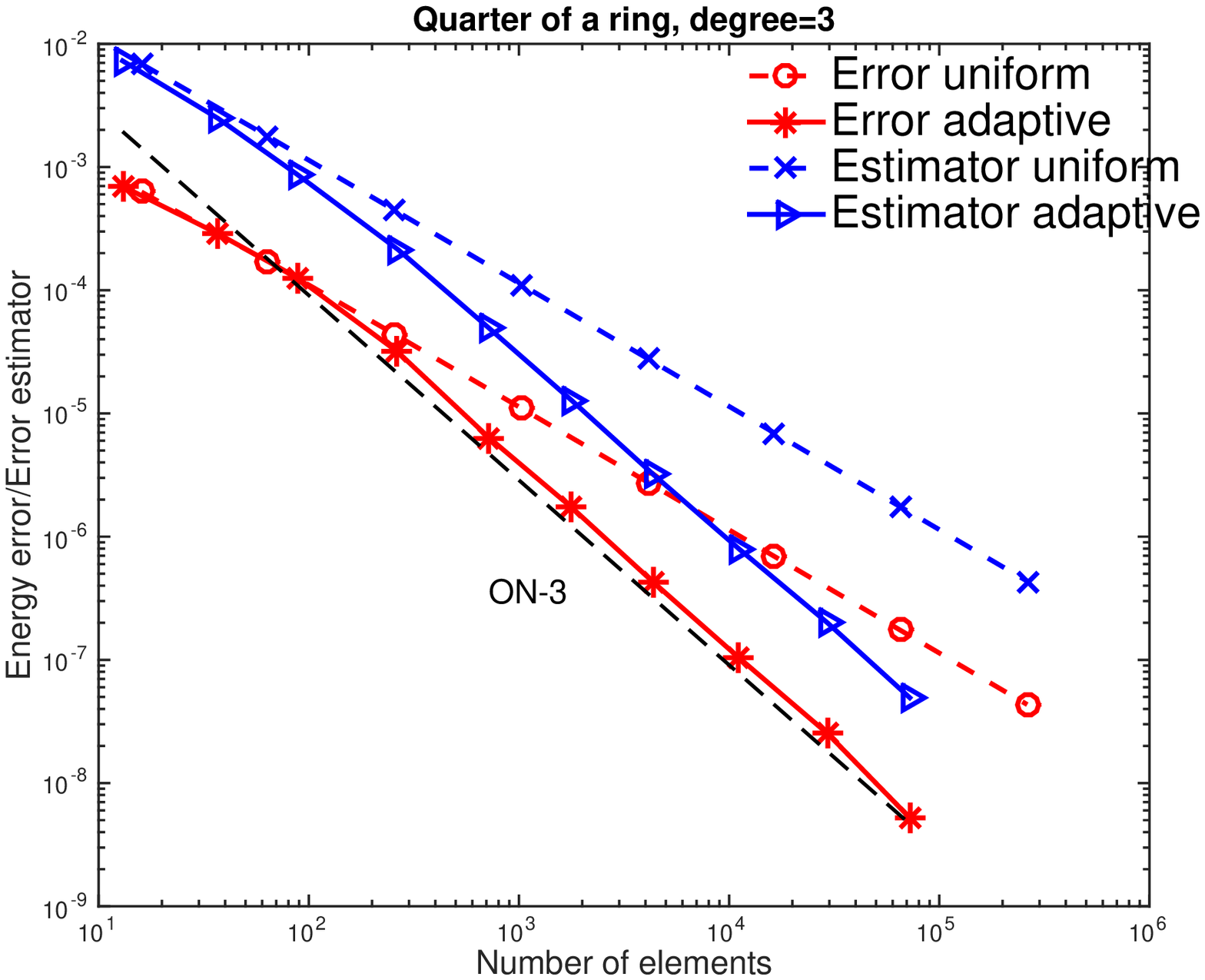}
\includegraphics[width=0.32\textwidth]{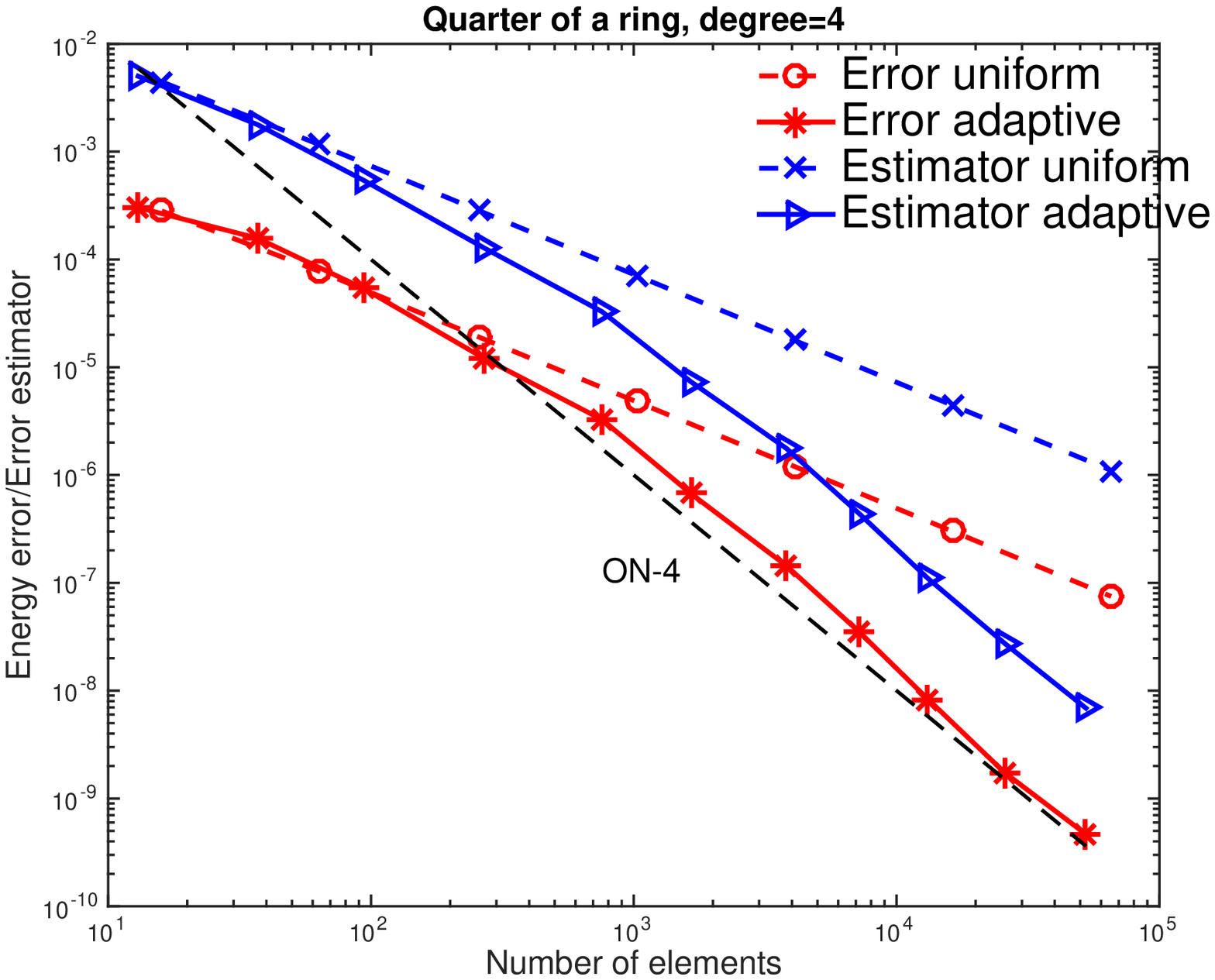}
\begin{minipage}{0.32\textwidth}
\begin{center}$p=2$\end{center}
\end{minipage}
\begin{minipage}{0.32\textwidth}
\begin{center}$p=3$\end{center}
\end{minipage}
\begin{minipage}{0.32\textwidth}
\begin{center}$p=4$\end{center}
\end{minipage}

\caption{
Energy error $\norm{\nabla u-\nabla U_\ell}{L^2(\Omega)}$ and  error estimator $\eta_\ell$ for the problem of Section~\ref{subsec:ring} on the quarter ring for uniform ($\theta=1$) and adaptive ($\theta=0.8$) mesh-refinement and different spline orders $p_1=p_2=p\in\{2,3,4\}$, where adaptivity always regains the respective optimal convergence rate.
}
\label{fig:bendshape} 
\end{figure}
%\input{07_final}

% Acknowledgement
\section*{Acknowledgement} The authors acknowledge support through the Austrian Science Fund (FWF) under grant P29096 %\textit{Optimal isogeometric boundary element method} and the FWF doctoral school \textit{Dissipation and Dispersion in Nonlinear PDEs} funded under 
and grant W1245.

%%%%%%%%%%%%%%%%%%%%%%%%%%%%%%%%%%%%%%%%%%%%%%%%%%%%%%%%%%%%%%%%%%%%%
% Bibliography
\bibliographystyle{alpha}
\bibliography{literature}
%%%%%%%%%%%%%%%%%%%%%%%%%%%%%%%%%%%%%%%%%%%%%%%%%%%%%%%%%%%%%%%%%%%%%

\end{document}